\newif\ifSingleColumn
\newcounter{ALC@unique}
\pgfplotsset{compat=1.16}
\renewcommand{\child}[1]{\operatorname{ch}\left(#1\right)}
\renewcommand{\nodes}[1]{\operatorname{nod}\left(#1\right)}			
\renewcommand{\hat}{\widehat}
\newcommand{\Kinf}{\mathcal{K}_{\infty}}
\newcommand{\fa}{\tilde{f}}
\newcommand{\confdyn}{C}
\newcommand{\RAOCP}{DR-OCP}
\newcommand{\nconst}{n_g}
\newcommand{\rv}{\xi}
\newcommand{\rAmb}{r}
\providecommand{\DR}[1]{\hat{#1}}
\newcommand{\fc}{\fa^{\DR{\law}_\hor}}
\newcommand{\conf}{\beta}
\newcommand{\confb}{\boldsymbol{\conf}}
\newcommand{\nbeta}{n_{\conf}}
\newcommand{\augset}{\mathcal{Z}}
\newcommand{\Y}{\mathcal{Y}}
\newcommandx{\Vht}[2][2=\hor]{\DR{\cost}_{#2}^{(#1)}}  
\newcommandx{\probt}[2][1=t,2=\hor]{\gamma_{#2}^{(#1)}}  
\newcommandx{\evtCost}[2][1=t,2=\hor]{B_{#2}^{(#1)}}  
\newcommandx{\eps}[2][1=t,2=\hor]{\varepsilon_{#2}^{(#1)}}  
\newcommandx{\evtCov}[1][1=t]{A^{(#1)}}  
\newcommand{\Uht}[1]{\DR{\Ufeas}^{(#1)}}  
\newcommandx{\Vtt}[2][2=\hor]{Q_{#2}^{(#1)}} 
\newcommandx{\Vtil}[1][1=\hor]{Q_{#1}}
\newcommand{\Vtrue}{{\cost}_{\hor}^{\star}}  
\newcommand{\useq}{\bm{u}}
\newcommand{\xseq}{\bm{x}}
\newcommand{\Qf}{Q_{\mathrm{f}}}
\newcommandx{\D}[2][1={},2={}]{
    \mathcal{D}\ifstrempty#1{}{
        _{\scriptscriptstyle\mathrm{#1}
            \ifstrempty{#2}{}{
                ,#2
            }
        }
    }
}
\newcommand{\DKL}{\D[KL]}  
\newcommand{\ambdia}{\delta}  
\newcommand{\Wker}{K}
\newif\ifDraft
\newif\ifExtraSteps
\newif\ifJournal
\newif\ifArxiv
\newcommand{\extrastep}[1]{
\ifExtraSteps
{\color{ForestGreen!60!black} #1}
\else\fi
} 
\newcommand{\revision}[2]{
{#2}
}
\newcommand{\rerevision}[2]{
{#2}
}
\newcommand\thetitle{A General Framework for Learning-Based Distributionally Robust MPC of Markov Jump Systems}
\begin{document}
\title{\thetitle
}
\author{Mathijs~Schuurmans and Panagiotis~Patrinos
  \thanks{M. Schuurmans and P. Patrinos are with the Department 
  of Electrical Engineering (\textsc{esat-stadius}), KU Leuven, 
  Kasteelpark Arenberg 10, 3001 Leuven, Belgium.
  Email: \texttt{\{mathijs.schuurmans, panos.patrinos\}@esat.kuleuven.be}}
  \thanks{This work was supported by: FWO projects: No. G086318N; No. G086518N; Fonds de la Recherche Scientifique -- FNRS, the Fonds Wetenschappelijk Onderzoek--Vlaanderen under EOS Project No. 30468160 (SeLMA), Research Council KU Leuven C1 project No. C14/18/068 and the 
  Ford--KU Leuven Research Alliance project No. KUL0023.
  }
  \thanks{A preliminary version of this work has been presented at the 59th IEEE Conference on Decision and Control \cite{schuurmans_LearningBasedDistributionallyRobust_2020}.}
}
\maketitle

\begin{abstract}                          
We present a \revision{data-driven}{learning} model predictive control (MPC) scheme for chance-constrained Markov jump systems with unknown switching probabilities.
Using samples of the underlying Markov chain, ambiguity sets of transition probabilities are estimated which include the true conditional probability distributions with high probability. These sets are updated online and used to formulate a time-varying, risk-averse optimal control problem. We prove recursive feasibility of the resulting MPC scheme and show that the original chance constraints remain satisfied at every time step. Furthermore, we show that under sufficient decrease of the confidence levels, the resulting MPC scheme renders the closed-loop system mean-square stable with respect to the true-but-unknown distributions, while remaining less conservative than a fully robust approach.
Finally, we show that the \revision{data-driven}{} value function of the learning MPC converges from above to its nominal counterpart as the sample size grows to infinity. We illustrate 
our approach on a numerical example.  \end{abstract}

\section{Introduction}
\subsection{Background, motivation and related work}
Due to the ubiquitous nature of stochastic uncertainty 
in processes arising in virtually all branches of science and engineering, 
control of dynamical systems perturbed by stochastic processes 
is a long-standing topic of research.
Model predictive control (\acs{MPC}\acused{MPC}) -- stochastic \ac{MPC} in particular -- has been a popular and successful tool
in this endeavor, due to its ability to naturally include probabilistic 
information directly into the control design via the cost, the dynamics and the constraints \cite{kouvaritakis_ModelPredictiveControl_2016, mesbah_StochasticModelPredictive_2016,rawlings_ModelPredictiveControl_2017}.
In classical stochastic \ac{MPC}, however, it is typically assumed that 
the distribution of the underlying stochastic process is known, although in
practice, this is usually not the case. If the disturbance takes values on 
a bounded set, the absence of full distributional knowledge can be
taken into account by designing the controller under 
the worst-case realization of the stochastic disturbance. This 
approach is referred to as robust MPC \cite{kouvaritakis_ModelPredictiveControl_2016,rawlings_ModelPredictiveControl_2017}.

An obvious drawback of robust approaches is that the complete
disregard of the probabilistic nature of the disturbance can be rather 
crude, resulting in a tendency for overly conservative decisions. 
As an alternative approach, one may simply compute an empirical 
estimate of the disturbance distribution and replace 
the true value by this estimate in the optimal control 
problem. Although this is a reasonable approach 
given a sufficient amount of data, for more
moderate sample sizes, there may be a significant misestimation
of the underlying distributions---often referred to as \textit{ambiguity}.  
It is well known that this is likely to cause degradation of the 
resulting performance when evaluated on new samples from the 
true distribution. This phenomenon is known as the \emph{optimizer's curse} \cite{mohajerinesfahani_DatadrivenDistributionallyRobust_2018}.
To account for this ambiguity, one could, instead of a point estimate, construct a set of all distributions (an \emph{ambiguity set}) that are in some specific sense consistent with the data.
By accounting for the worst-case distribution within this set, 
the decision maker is protected against the limitations of the finite sample size.

This approach, known as \ac{DR} optimization \cite{dupacova_MinimaxApproachStochastic_1987}, addresses 
the drawbacks of the above approaches by utilizing available data, but 
only to the extent that it is statistically meaningful. 
As more data is gathered online and ambiguity sets get updated 
accordingly, it is expected that these sets will shrink,
so that the optimal decisions gradually become less conservative. 
This, among other desirable properties, has caused an increasing popularity of \ac{DR}
methods in recent years, initially mostly in stochastic programming and operations research communities \cite{mohajerinesfahani_DatadrivenDistributionallyRobust_2018,parys_DataDecisionsDistributionally_2020,
gao_DistributionallyRobustStochastic_2016,
wiesemann_DistributionallyRobustConvex_2014,
bertsimas_DatadrivenRobustOptimization_2018a} and 
more recently in (optimal) control \revision{}{\cite{schuurmans_SafeLearningBasedControl_2019,coppens_DatadrivenDistributionallyRobust_2020,
yang_WassersteinDistributionallyRobust_2018,hakobyan_WassersteinDistributionallyRobust_2020,
hakobyan_DistributionallyRobustRisk_2021,
coulson_RegularizedDistributionallyRobust_2019}} as well. See also \cite{rahimian_distributionally_2019} for a comprehensive review.
Much of the earlier work focuses on the study of particular classes of 
ambiguity sets, each modelling certain structural assumptions on the 
underlying distribution. \revision{Our analysis, however,}{Although most of our analysis} does not require a particular family of ambiguity sets, 
\revision{We illustrate this in \Cref{sec:learning}, by reviewing some commonly used ambiguity set classes and 
showing how they fit into our proposed framework.}{we will, for concreteness, put particular emphasis on ambiguity sets that are written as a divergence ball around an empirical estimate, as this family of sets is a natural choice in the setting at hand. This is described in \Cref{sec:learning}, 
where a table containing several choices for the divergence is provided.}

As the focus of research in data-driven and learning-based control is gradually shifting towards real-life, safety-critical applications, there has been an increasing concern for safety guarantees of data-driven methods, which are valid in a finite data regime.
This has led to a variety of different approaches besides distributionally robust methodologies,
each valid under different assumptions on the data-generating process and the controlled systems.
For instance, this has led to data-driven variants of tube-based \ac{MPC} \cite{aswani_ProvablySafeRobust_2013,hewing_ScenarioBasedProbabilisticReachable_2020}, Gaussian-process based estimation with reachability-based safe set constraints\cite{fisac_GeneralSafetyFramework_2019}, Data-enabled predictive control (``DeePC'') \cite{coulson_DataEnabledPredictiveControl_2019} combining Willems' fundamental lemma with \ac{MPC} for linear systems, \revision{}{or techniques based on Koopman operators \cite{zhang_RobustLearningBasedPredictive_2022}}.
We refer to \cite{hewing_learning-based_2020} for a recent survey.

In this work, we allow for general (possibly nonlinear) dynamics under stochastic disturbances with unknown distribution, and subject to 
chance constraints. 
However, we restrict our attention to finitely-supported stochastic disturbances.
One of the advantages of this construction is that the predicted evolution
of the system can be represented on a \emph{scenario tree}, which 
allows us to explicitly (and without approximation) optimize over closed-loop control policies, rather 
than open-loop sequences. This property helps combat excessive conservatism due to accumulation of uncertainty over the prediction horizon \cite{bernardini_StabilizingModelPredictive_2012,bernardini_ScenariobasedModelPredictive_2009,lucia_MultistageNonlinearModel_2013}.
Motivated by similar considerations, \cite{leidereiter_QuadraturebasedScenarioTree_2014a} and \cite{bonzanini2020safe} utilize scenario trees to approximate the realizations of continuous disturbances. 
\cite{bonzanini2020safe} then considers safety separately by projecting the computed control action onto a set of control actions that keep the state within safe \ac{RCI} set, similarly to \cite{fisac_GeneralSafetyFramework_2019}. This projection requires the additional solution of a \ac{MIQP}, whenever the used \ac{RCI} set is polyhedral. 
In our setting, however, we consider the switching behavior inherent to 
the system, allowing us to provide safety guarantees directly through the 
application of \ac{MPC} theory on the joint controller-learner system.

We will in particular assume that the underlying disturbance process is a Markov 
chain, leading to a system class commonly referred to as \emph{Markov jump systems}. 
Control of this class of systems has been widely studied and has been used to model systems stemming from a wide range of applications \cite{costa_discrete-time_2005, patrinos_StochasticModelPredictive_2014,
bernardini_ScenariobasedModelPredictive_2009}. 
In the \emph{known distribution} case, stability analysis of nonlinear stochastic \ac{MPC} for this system class has been performed from a worst-case perspective\cite{lucia_StabilityPropertiesMultistage_2020}, in mean-square sense\cite{patrinos_StochasticModelPredictive_2014} and in the more general risk-square sense \cite{sopasakis_RiskaverseModelPredictive_2019,singh_FrameworkTimeConsistentRiskSensitive_2018}.
\revision{}{We emphasize here the distinction between risk-averse and \ac{DR} approaches, where the former optimizes a given \emph{coherent risk measure} with respect to the true distribution, whereas the latter constructs a data-driven ambiguity set with respect to which the stochastic cost is robustified. By the dual risk representation \cite[Thm. 6.4]{shapiro2009lectures}, every ambiguity set induces some coherent risk measure and vice-versa, leading both 
approaches to solve the same class of optimization problems. However, the statistical interpretation and thus, the corresponding guarantees differ significantly.}

\revision{By the dual interpretation of risk measures \cite[Thm. 6.4]{shapiro2009lectures}, }{Indeed, by the mentioned equivalence, 
the notion of risk-square stability in \cite{sopasakis_RiskaverseModelPredictive_2019} guarantees mean-square stability (MSS) with respect to all the distributions within the `ambiguity set' induced by the used risk measure. In practice, however, this is insufficient to guarantee MSS with respect to the true-but-unknown distribution, as it is impossible to construct a nontrivial ambiguity set that contains the true distribution with certainty.
However, 
}
\revision{We show}{we will show} that by careful design of a data-driven sequence of ambiguity sets
-- which only contain the true distributions with high probability --
this concept can be extended to show MSS\revision{}{, as well as recursive constraint satisfaction} with respect to the true distribution, under some additional assumptions.

Other data-driven methods have been proposed to design controllers 
for \emph{unknown transition probabilities} \cite{beirigo_online_2018,he_reinforcement_2019}. However, these works are restricted to a simpler, \textit{unconstrained} setting involving only linear state-feedback policies. \revision{}{Furthermore, related risk-averse and \ac{DR} techniques have been proposed for Markov decision processes (MDPs) 
\cite{
   derman_DistributionalRobustnessRegularization_2020a,
   xu_DistributionallyRobustMarkov_2010,
   mannor_RobustMDPsKRectangular_2016,
   ahmadi_ConstrainedRiskAverseMarkov_2020
}, although these consider discrete states and actions, allowing one to solve directly the Bellman equation over all admissible policies. Unfortunately, these 
techniques become intractable in the present setting involving continuous states and actions.}

We finally study the convergence of the optimal value function 
of our \revision{data-driven}{learning} controller to the nominal counterpart. 
This property, known as asymptotic consistency, has recently been studied in the stochastic optimization literature 
for (static) distributionally robust optimization problems under Wasserstein ambiguity \cite{cherukuri_ConsistencyDistributionallyRobust_2020,mohajerinesfahani_DatadrivenDistributionallyRobust_2018}.
A common assumption in this line of work is Lipschitz continuity of the cost/constraint functions with respect to the 
random variable.
This assumption is not suitable for our purposes, since we consider discrete random variables $\md \in \W$ for which
a suitable norm may not exist. Instead, we will 
in some cases need to resort to a uniform boundedness assumption,
which serves a similar purpose. 
In the non-convex case, the authors of \cite{cherukuri_ConsistencyDistributionallyRobust_2020} base their analysis on \cite{guo_ConvergenceAnalysisMathematical_2017}, 
in which the ambiguity sets are not assumed to be random. 
An additional assumption is added that the constraint 
boundary has probability zero, such that almost everywhere, the constraint is continuous. This assumption helps in dealing with the discontinuity 
of the step-function at 0 which is inherent to chance constraints. 
Alternatively, the chance constraints can be replaced \revision{}{by}
risk constraints involving the average value-at-risk \cite{nemirovski2012safe}, which also 
circumvents this issue. 
Besides the mentioned differences in assumptions, additional care is required to handle the multistage nature of the stochastic 
optimization problems considered here. \revision{}{Specifically, both the optimal cost and the feasible set are defined recursively through the Bellman operator (see \Cref{sec:theory}), causing more complex 
characterizations of the optimal value function as well as 
reduced freedom in selecting the problem parameters to ensure its required properties as compared to a static two-stage stochastic program.}

\subsection{Contributions}
\revision{}{Summarizing the previous discussion, we highlight the following contributions of our work.}
\begin{inlinelist} 
   \item We present a general \revision{data-driven,}{online learning} \ac{DR}-\ac{MPC} framework for Markov switching systems with unknown transition probabilities. The resulting closed-loop system satisfies the (chance) constraints of the original stochastic problem and allows for online improvement of performance based on observed data. 
   Thus, we extend the recently developed framework of risk-averse \ac{MPC} \cite{sopasakis_RiskaverseModelPredictive_2019,sopasakis_risk-averse_2019c,singh_FrameworkTimeConsistentRiskSensitive_2018} to \revision{a data-driven}{an online learning}
   setting, in which the involved risk measures are selected and calibrated automatically based on their dual (\ac{DR}) interpretation.
   \revision{}{To this end, we formalize the procedure for estimating and updating the corresponding ambiguity sets as a dynamical system, which 
   we refer to as the \textit{learning system}. We present conditions on 
   this learning system to ensure its convergence and to obtain meaningful statistical guarantees on the resulting controllers with respect to the unknown underlying distributions}.
   \item We provide sufficient conditions for recursive feasibility and mean-square stability of the 
   \ac{DR}-\ac{MPC} law, with respect to the true-but-unknown distribution. To this end, we state the problem in terms of an augmented state vector \revision{}{including the state of the previously mentioned \textit{learning system}}.\revision{of constant dimension, which summarizes the available information at every time.}{}
   The dynamics of this so-called \emph{learner state} can be 
   easily expressed for common choices for the ambiguity set. 
   This \ifJournal idea, which is closely related to that of 
   \emph{information states}
   \cite[Ch. 5]{bertsekas_DynamicProgrammingOptimal_2005} 
   \fi allows us to formulate the otherwise time-varying optimal control problem as a dynamic programming recursion, facilitating stability 
   analysis of the original control system and the learning system jointly. 
   \item We provide sufficient conditions under which the value of the \ac{DR} problem converges from above to that of the nominal optimal control problem, extending existing results in stochastic optimization to the \revision{}{constrained,} multi-stage, dynamical setting.
\end{inlinelist}
\subsection{Notation}
Let $\N$ denote the set of natural numbers and $\N_{>0} \dfn \N \setminus \{0\}$. For two naturals $a,b \in \N$ with $a \leq b$, we denote $\natseq{a}{b} \dfn \{ n \in \N \mid a \leq n \leq b \}$ and similarly, we introduce the shorthand $\seq{\md}{a}{b} \dfn (\md_{t})_{t=a}^{b}$ to denote a sequence of variables indexed from $a$ to $b$. We denote the extended real line by $\barre \dfn \Re \cup \{\pm \infty\}$ and the set of \emph{nonnegative} (extended) real numbers by $\Re_+$ (and $\barre_+$). The cardinality of a (finite) set $\W$ is denoted by $|\W|$. We write $f: X \rightrightarrows Y$ to denote that $f$ 
is a \emph{set-valued mapping} from $X$ to $Y$. A function is \ac{lsc} if 
its epigraph is closed. 
Given a matrix $P \in \Re^{n\times m}$, we denote its $(i,j)$'th element by $\elem{P}{i}{j}$ and its $i$'th row as $\row{P}{i} \in \Re^m$. The $i$'th element of a vector $x$ is denoted $\idx{x}{i}$. 
$\vect(M)$ denotes the vertical concatenation of the columns of a matrix $M$. We denote the vector in $\Re^k$ with all elements one as $\1_k \dfn (1)_{i=1}^{k}$ and the probability simplex of dimension $k$ as $\simplex_k \dfn \{ p \in \Re_+^{k} \mid \trans{p} \1_k = 1\}$. We define the function $\1_{x=y} = 1$ if $x=y$ and $0$ otherwise. 
The indicator function $\delta_{X}: \Re^{n} \rightarrow \barre$ of a set $X \subseteq \Re^{n}$ is defined by $\delta_{X}(x) = 0$ if $x \in X$ and $\infty$ otherwise. 
\ifJournal The level set of a function $V: \Re^n \to \barre$ is denoted $\lev_{\leq \varepsilon} V \dfn \{x \in \Re^n \mid V(x) \leq \varepsilon \}$. The interior of a set $X$ is denoted $\itr X$. \fi 
\ifJournal We denote the positive part of a quantity $x$ as $[x]_{+} \dfn \max\{0, x\}$, where $\max$ is taken element-wise.
We say that a function $\phi: \Re_+ \to \Re_+$ belongs to the class of $\Kinf$ functions if it is continuous, strictly increasing, unbounded, and zero at zero \cite{rawlings_ModelPredictiveControl_2017}.
\revision{Given}{Finally, given} a nonempty, proper cone $\cone$, the generalized inequality $a \leqc{\cone} b$ is equivalent to $b - a \in \cone$. $\cone^* \dfn \{y \mid \<x,y\> \geq 0,\, \forall x \in \cone \}$ denotes the dual cone of $\cone$.
\fi
\section{Problem statement and structural assumptions}

Let $\w \dfn (\md_{t})_{t \in \N}$ denote a discrete-time, time-homogeneous Markov chain defined on some probability space\footnote{\rerevision{}{For an explicit construction of $(\Omega, \F, \prob)$, we refer to {\cite[Thm. 8.1]{billingsley_ProbabilityMeasure_1995}}}.} $(\Omega, \F, \prob)$ and taking values on $\W \dfn \natseq{1}{\nModes}$.
The \emph{transition kernel} governing the Markov chain is denoted by $\transmat = (\elem{\transmat}{i}{j})_{i,j \in \W}$, where $\elem{\transmat}{i}{j} = \prob[\md_{t} = j \mid  \md_{t-1} = i]$.
We refer to $w_t$ as the \emph{mode} of the chain at time $t$.
For simplicity, we will assume that the initial mode is known to be $i$, so $p_0=(1_{\md=i})_{\md \in \W}$. 
Therefore, the Markov chain is fully characterized by its transition kernel.
Finally, we will assume that the Markov chain is ergodic.

\begin{assumption}[Ergodicity] \label{assum:ergodicity}
    The Markov chain $(\md_t)_{t\in\N}$ is ergodic, i.e., there 
    exists a value $k \in \N_{>0}$, such that  $\transmat^k > 0$ element-wise\revision{, for some $k \geq 1$}{}.
\end{assumption}
This assumption, stating that every mode is reachable from any 
other mode in $k$ steps, ensures that every mode of the chain gets visited infinitely often \cite[Ex. 8.7]{billingsley_ProbabilityMeasure_1995}. 
This will allow us to guarantee convergence of the proposed \revision{data-driven}{learning} \ac{MPC} scheme to its nominal counterpart.
(See \Cref{sec:theory}.)

We will consider discrete-time systems with dynamics of the form 
\begin{equation} \label{eq:system-dynamics}
    x_{t+1} = f(x_t, u_t, \md_{t+1}), 
\end{equation}
where $x_t \in \Re^{\ns}, u_t \in \Re^{\na}$ are the state and control action at time $t$, respectively. We will assume that the state $x_t$ and mode $\md_t$ are observable at time $t$. This is equivalent to 
the more common notation $x_{t+1}=f(x_t,u_t,w_t)$, assuming $w_{t-1}$ is observable. However, as we will consider $w_{t}$ to be part of 
the system state at time $t$, the notation of \eqref{eq:system-dynamics} 
will be more convenient.

Since $w_t$ is drawn from a 
Markov chain, such systems are commonly referred to as Markov jump systems. Whenever $f(\argdot, \argdot, \md)$ is a linear function, 
\eqref{eq:system-dynamics} describes a Markov jump linear system \cite{costa_discrete-time_2005}. Since the state $x_t$ and mode $ \md_t$ are observable at time $t$, the distribution of $x_{t+1}$ depends solely on the conditional switching distribution $\row{\transmat}{\md_t}$, for a given control action $u_t$.

\revision{
For a given state-mode pair $(x, \md) \in \Re^{\ns} \times \W$, we impose $\nconst$ 
chance constraints of the form
\begin{equation} \label{eq:chance-constraint-original}
\prob[g_{i}(x,u,\md,\mdnxt) > 0 \mid x,\md] \leq \alpha_i, \forall i \in \natseq{1}{\nconst},
\end{equation}
where $\mdnxt \sim \row{\transmat}{\md}$ is randomly drawn from the 
Markov chain $\w$ in mode $w$, and  
$g_i: \Re^{\ns} \times \Re^{\na} \times \W^2 \rightarrow \Re$ are constraint functions with corresponding constraint violation rates $\alpha_i$. By appropriate choices of $\alpha_i$ and $g_i$, constraint \eqref{eq:chance-constraint-original} can be used to encode robust constraints ($\alpha_i = 0$) or chance constraints ($0<\alpha_i<1$) on the state, the control action, or both. 
Note that the formulation \eqref{eq:chance-constraint-original} additionally covers chance constraints on the successor state $f(x,u,\mdnxt)$ under input $u$, conditioned on the current values $x$ and $\md$.
To ease notation, we will without loss of generality assume that $\nconst=1$.
In their standard form, chance constraints lead to nonconvex, nonsmooth (even discontinuous) constraints. For this reason, they are commonly approximated using \emph{risk measures} \cite{nemirovski2012safe}.
Particularly, the (conditional) \emph{average value-at-risk} (at level $\alpha \in [0, 1]$ and with reference distribution $p \in \simplex_\nModes$) of the random variable $\xi: \W^2 \to \Re$ is defined as
\begin{equation} 
    \begin{aligned}
        \label{eq:def-AVAR-old}
            \AVAR_{\alpha_i}^{p} &[\rv(\md,\mdnxt) \mid \md]  
        \ifSingleColumn\else\\\fi
        =&\begin{cases}
            \min\limits_{t\in\Re} 
                t+\nicefrac{1}{\alpha}\E_{p}\left\{\plus{\rv(\md,\mdnxt)-t} \mid \md\right\},&\alpha\neq 0\\
            \max_{\mdnxt \in \W} \left \{ \rv(\md,\mdnxt) \right \},
                                & \alpha=0.
        \end{cases}
    \end{aligned}
\end{equation} 
}
{
For a given state-mode pair $(x, \md)$, we will impose probabilistic 
constraints of the form 
\begin{equation}
    \label{eq:avar-constraint-original}
        \AVAR_{\alpha}^{\row{\transmat}{\md}}
         \big[ g_i(x,u,\md,\mdnxt) \mid x,\md \big] \leq 0,\;\rerevision{}{i \in \natseq{1}{\nconst}} 
\end{equation}
where $\mdnxt \sim \row{\transmat}{\md}$ is randomly drawn from the 
Markov chain $\w$ in mode $w$,  
$g_i: \Re^{\ns} \times \Re^{\na} \times \W^2 \rightarrow \Re$ are constraint functions with corresponding constraint violation rates $\alpha_i$, and $\AVAR$ denotes the (conditional) \emph{average value-at-risk}. The conditional $\AVAR$ (at level $\alpha \in [0, 1]$ and with reference distribution $p \in \simplex_\nModes$) of the random variable $\xi: \W^2 \to \Re$ is defined as
\begin{equation} 
    \begin{aligned}
        \label{eq:def-AVAR}
            \AVAR_{\alpha}^{p} &[\rv(\md,\mdnxt) \mid \md]  
        \ifSingleColumn\else\\\fi
        =&\begin{cases}
            \min\limits_{t\in\Re} 
                t+\nicefrac{1}{\alpha}\E_{p}\left\{\plus{\rv(\md,\mdnxt)-t} \mid \md\right\},&\alpha\neq 0\\
            \max_{\mdnxt \in \W} \left \{ \rv(\md,\mdnxt) \right \},
                                & \alpha=0,
        \end{cases}
    \end{aligned}
\end{equation} 
and it has the useful property that 
}
if $p = \row{\transmat}{\md}$, then the following implication holds tightly \cite[sec. 6.2.4]{shapiro2009lectures}
\begin{equation} \label{eq:risk-constraint-implication} 
    \begin{aligned}
    \AVAR^p_{\alpha}[\rv(\md,\mdnxt) \mid \md] \leq 0
    \Rightarrow \prob[\rv(\md,\mdnxt) \leq 0 \mid \md] \geq 1-\alpha.
    \end{aligned}
\end{equation}
By exploiting the dual risk representation \cite[Thm 6.5]{shapiro2009lectures}, the left-hand inequality in \eqref{eq:risk-constraint-implication} can be formulated in terms of only linear constraints \cite{sopasakis_risk-averse_2019c}. As such, it can be used as a tractable surrogate for chance constraints \revision{}{which would lead to nonconvex, non-smooth constraints \cite{nemirovski2012safe}}.
By appropriate choices of $\alpha_i$ and $g_i$, constraint \eqref{eq:avar-constraint-original} can be used to encode robust constraints ($\alpha_i = 0$) or probabilistic constraints ($0<\alpha_i<1$) on the state, the control action, or both. 
Note that it additionally covers chance constraints on the successor state $f(x,u,\mdnxt)$ under input $u$, conditioned on the current values $x$ and $\md$.
To ease notation, we will without loss of generality assume that $\nconst=1$.
\revision{Consequently, }{To summarize,}
the set of feasible control actions as a function of $x$ and $w$ can be written as 
\begin{equation}
    \label{eq:chance-constraint}
    \Ufeas(x, \md) \dfn \left \{ u \in U:
        \AVAR_{\alpha}^{\row{\transmat}{\md}} \big[ g(x,u,\md,\mdnxt) \mid x,\md \big] \leq 0 \right\},
\end{equation}
where $U \subseteq \Re^{\na}$ is a nonempty, closed set.

Ideally, our goal is to synthesize -- by means of a stochastic \ac{MPC} scheme -- a stabilizing control law $\law_{\hor}: \Re^{\ns} \times \W \rightarrow \Re^{\na}$, such that for the closed loop 
system \(x_{t+1} = f(x_t, \law_{\hor}(x_t,  \md_t), \md_{t+1})\), 
it holds almost surely that $\law_{\hor}(x_t, \md_t) \in \Ufeas(x_t, \md_t)$, for all $t \in \N$. Consider a sequence of $\hor$ control laws $\pol = (\pol_{k})_{k=0}^{\hor-1}$, referred to as a \emph{policy} of length $\hor$.
Given a stage cost $\ell:\Re^{\ns} \times \Re^{\na} \times \W \rightarrow \Re_{+}$,
and a terminal cost $\Vf: \Re^{\ns} \times \W \rightarrow \Re_{+}$ and corresponding terminal set $\Xf$: $\bar{\Vf}(x,w) \dfn \Vf(x, \md) + \delta_{\Xf}(x, \md)$, we can assign to each such policy $\pol$, a cost
\begin{equation} \label{eq:cost-function-stochastic}
    \cost_{\hor}^{\pol}(x, \md) \dfn \textstyle\E\big[\sum_{k=0}^{\hor-1} \ell(x_k, u_k,\md_k) + \bar{\Vf}(x_\hor,\md_\hor) \big], 
\end{equation}
where $x_{k+1} = f(x_k, u_k, \md_{k+1})$, $u_k = \pol_k(x_k, \md_k)$ and $(x_0, \md_0) = (x,\md)$, for $k \in \natseq{0}{\hor-1}$.
This defines the following stochastic \ac{OCP}. 

\begin{definition}[Stochastic \ac{OCP}] \label{def:stochastic-ocp}
For a given state-mode pair $(x, \md)$, the optimal cost of the stochastic \ac{OCP} is 
\begin{subequations} \label{eq:stochastic-OCP}
\begin{align}
    \cost_{\hor}(x, \md) = \min_{\pol} \cost_{\hor}^{\pol}(x, \md)
\end{align}
subject to 
\begin{align} \label{eq:stochastic-constraints}
    x_0&=x, \md_0=\md, \pol = (\pol_{k})_{k=0}^{\hor-1}, \\
    x_{k+1} &= f(x_k, \pol_{k}(x_k,\md_k), \md_{k+1}),\\
    \pol_{k}(x_k, \md_k) &\in \Ufeas(x_k ,\md_k), 
    \; \forall k \in \natseq{0}{\hor-1}.
\end{align}
\end{subequations}
We denote by $\Pi_{\hor}(x, \md)$ the corresponding set of minimizers.
\end{definition}

To ensure existence of a solution to \eqref{eq:stochastic-OCP} (and its DR counterpart,
defined in \Cref{sec:data-driven-MPC}),
we will impose the following (standard) regularity conditions\cite{rawlings_ModelPredictiveControl_2017, sopasakis_RiskaverseModelPredictive_2019}.
\begin{assumption}[Problem regularity] \label{assum:regularity}
    The following are satisfied for all $\md, \mdnxt \in \W$: 
    \begin{conditions}
        \item Functions
        $\ell(\argdot, \argdot, \md): \Re^{\ns} \times \Re^{\na} \to \Re_+$,
        $\Vf(\argdot, \md): \Re^{\ns} \to \Re_+$, 
        $f(\argdot, \argdot, \md)$, and 
        $g(\argdot, \argdot, \md, \mdnxt)$, $i \in \natseq{1}{\nconst}$
        are continuous;
        \item $U$ and $\Xf$ are closed; 
        \item $f(0,0, \md) = 0$, $\ell(0, 0, \md) = 0$, $0 \in \Ufeas(0, \md)$, and $\bar{\Vf}(0,\md)=0$;
        \item One of the following is satisfied: 
        \begin{enumerate}
            \item $U$ is compact; or 
            \item $\ell(x,u,\md) \geq c(\nrm{u})$ with $c \in \Kinf$, for all $(x,u) \in \Re^{\ns} \times U$.
        \end{enumerate} 
    \end{conditions}
    
\end{assumption}
Let $(\pol^{\star}_{k}(x, \md))_{k=0}^{\hor-1} \in \Pi_{\hor}(x, \md)$, so that the stochastic \ac{MPC} control law is given by $\law_{\hor}(x, \md) = \pol^{\star}_0(x, \md)$. 
Sufficient conditions on the terminal cost $\bar{\Vf}$ and its effective 
domain $\dom \bar{\Vf} = \Xf$ to ensure mean-square stability of the 
closed-loop system, have been studied for a similar problem set-up in 
\cite{patrinos_StochasticModelPredictive_2014}, among others.

Both designing and computing such a stochastic \ac{MPC} law requires knowledge of the probability distribution governing the state dynamics \eqref{eq:system-dynamics},
or equivalently, of the transition kernel $\transmat$. 
In the absence of this knowledge, these probabilities are to be estimated 
from a finitely-sized data set and therefore subject to some 
level of ambiguity. Our goal is to devise an \ac{MPC} scheme which 
uses the available data in a principled manner, while explicitly 
taking this ambiguity into account. 

To this end, we will introduce the notion of a \textit{learner state}, 
which is very similar in spirit to the concept of an \emph{information state}, commonly used in control of partially observed Markov decision processes \cite{krishnamurthy_PartiallyObservedMarkov_2016}, where -- in contrast to our approach -- it is typically adopted in a Bayesian setting. In both cases, however, it can be regarded as an internal state of the controller that stores all the information required to build (a set of) conditional distributions over the next state, given the observed data.
\revision{}{
We will make this more precise in the next section.
Equipped with such a learning system,
our aim is to find a data-driven approximation to the stochastic \ac{OCP} defined by \eqref{eq:stochastic-OCP}, which asymptotically attains the optimal cost while preserving stability and constraint satisfaction during closed-loop operation.
}

\revision{
}{
The remainder of this work is organized as follows.
\Cref{sec:learning} formalizes the assumed learning system and presents and several classes of ambiguity sets found in the literature that fit the framework.
In \Cref{sec:data-driven-MPC}, we use this learning system to construct a distributionally robust counterpart to the optimal control problem in terms of the ingredients introduced above.
\Cref{sec:theory} contains a theoretical analysis of the proposed scheme;
and in \Cref{sec:numerical}, we illustrate the approach on some numerical examples.
}
\revision{
We formalize this in the following assumption.

\begin{assumption}[Learning system] \label{assum:learner-old}
Given a sequence $\seq{\md}{0}{t}$ sampled from the Markov chain $\w$, we can compute
\begin{enumerate}
    \item a statistic $\lrn_t: \W^{t+1} \rightarrow \lrnset \subseteq \Re^{n_{s}}$\revision{, with $\lrnset$ compact}{}, accompanied by a vector of confidence parameters $\confb_t = (\idx{\conf_t}{i})_{i=1}^{\nbeta} \in \I \dfn [0,1]^{\nbeta}$, \revision{which admit recursive update rules}{for which there exist some Markovian dynamics $\learner$ and $\confdyn$ such that} $\lrn_{t+1} = \learner(\lrn_t, \confb_t, \md_t, \md_{t+1})$ and $\confb_{t+1} = \confdyn(\confb_t)$, $t \in \N$; 
    \item an \emph{ambiguity set} $\amb: \lrnset \times \W \times [0,1] \rightrightarrows \simplex_{\nModes}: (\lrn, \md, \conf) \mapsto \amb_{\conf}(\lrn, \md)$, mapping $\lrn_t$, $ \md_t$ and the component $\idx{\conf_{t}}{i}$ to a convex subset of the $\nModes$-dimensional probability simplex $\simplex_\nModes$, such that for all $t \in \N$, and for all $i \in \natseq{1}{\nbeta}$,
    \begin{equation} \label{eq:high-confidence-old}
        \prob[\row{\transmat}{\md_{t}} \in \amb_{\idx{\conf_{t}}{i}}(\lrn_t,  \md_t)] \geq 1 - \idx{\conf_{t}}{i}.
    \end{equation}
\end{enumerate}
    We will refer to $\lrn_t$ and $\confb_t$ as the state of the learner and the confidence vector at time $t$, respectively.
\end{assumption}
\revision{}{

}
\sidenote{Somehow, I should make more clear what $\learner$ and $\confdyn$ are at this point. Maybe move this assumption to III?}

\begin{remark}[confidence levels]
    Two points of clarification are in order. 
    First, we consider a vector of confidence levels, rather 
    than a single value. This is motivated by the fact that one would often wish to assign separate confidence levels to ambiguity sets corresponding to the cost function on the one hand; and to those corresponding to the $\nconst$ chance constraints on the other hand (See \Cref{def:raocp}). Accordingly, we will assume that $\nbeta = \nconst + 1$. 
    
    Second, the confidence levels are completely exogenous to the system dynamics and can in principle be chosen to be any time-varying sequence satisfying the technical conditions discussed further (see \Cref{prop:conditions-confidences} and \Cref{assum:confidences}).
    The requirement that the sequence $(\confb_{t})_{t\in\N}$ can be written as the trajectory of a time-invariant dynamical system
    serves to facilitate theoretical analysis of the proposed 
    scheme through dynamic programming. 
\end{remark}

We will furthermore require the following restrictions 
on the choice of the learning dynamics the confidence levels.
\begin{assumption} \label{assum:stationary-old}
    There exists a stationary learner state  
    $\lrn^{\sstar} = \learner(\lrn^{\sstar}, \confb, \md, \mdnxt)$, for all $(\confb, \md, \mdnxt) \in \I \times \W^2$, such that  
    from any initial state $s_0$,
    $\lim_{t \to \infty} s_t = s^{\sstar}$, \as.
\end{assumption}
\begin{assumption} \label{assum:confidences-old}
    The confidence dynamics $\confb_{t+1} = \confdyn(\confb_t)$ is chosen such that 
    \(
        \sum_{t=0}^{\infty} \confb_{t} < \infty,
    \)    
    element-wise.
\end{assumption}
\Cref{assum:stationary} imposes that asymptotically, the 
learner settles down to some value which is no longer 
modified by additional data. It is natural to assume that in such a state, the learner unambiguously models the 
underlying distribution, as demonstrated, for instance, in \Cref{ex:learner-dynamics-TV}. However, without further 
assumptions, \revision{one could also consider}{this also includes} the trivial case where 
$\lrnset = \{s^{\sstar}\}$ and e.g., $\amb_{\beta}(\lrn, \md) = \simplex_{\nModes}$, in which case, no learning occurs and, in fact, a robust \ac{MPC} scheme is recovered.
In \Cref{sec:consistency}, we will pose an additional 
constraint on the learning system, which excludes this 
case, but allows us to show consistency of the \revision{data-driven}{learning} controller. 
\Cref{assum:confidences} states that the probability of obtaining an ambiguity set that contains the true conditional distribution (expressed by \eqref{eq:high-confidence}) increases sufficiently fast. This assumption will be of crucial importance in showing stability (see \Cref{sec:stability}).
To fix ideas, we keep the following example in mind as a suitable choice for the confidence dynamics throughout the article.
\begin{example}[Confidence dynamics] \label{ex:confidence-dynamics-old}
    A suitable family of sequences for the confidence levels satisfying \Cref{assum:confidences} (assuming $n_\conf = 1$ for simplicity)
    is obtained as 
    \[
        \confb_t = b (1+t)^{-q},\, t \in \N,
    \]
    with parameters $0 \leq b \leq 1$, $q > 1$. 
        This sequence can be described by the recursion 
    \(
        \confb_{t+1} = \confdyn(\confb_t) = b \confb_t {(\confb_t^{\nicefrac{1}{q}} + b^{\nicefrac{1}{q}})^{-q}},\, \confb_0 = b.
    \)
    Thus, it additionally satisfies the requirements of \Cref{assum:learner}.

\end{example}

The learner state $\lrn_t$ will in most practical cases 
be composed of a \revision{sufficient statistic}{statistical estimator} for the transition 
kernel and some parameter calibrating the size of the 
ambiguity set, based on \revision{statistical information}{concentration inequalities}. See \Cref{sec:divergence-based-ambiguity-sets} for more details and some concrete examples.

Equipped with a generic learning system of this form,
our aim is to find a data-driven approximation to the stochastic \ac{OCP} defined by \eqref{eq:stochastic-OCP}, which asymptotically attains the optimal cost while preserving stability and constraint satisfaction during closed-loop operation.

The remainder of this work is organized as follows.
\Cref{sec:learning} presents and compares several classes of ambiguity sets found in the literature,
and discusses how they fit in the framework of \Cref{assum:learner}.
In \Cref{sec:data-driven-MPC}, we construct a distributionally robust counterpart to the optimal control problem in terms of the ingredients introduced above.
\Cref{sec:theory} contains a theoretical analysis of the proposed scheme;
and in \Cref{sec:numerical}, we illustrate the approach on a numerical example.
}{}\section{Data-driven ambiguity sets} \label{sec:learning}

\revision{}{
\subsection{Abstract learning system}
As mentioned in the previous section, we model the procedure that 
maps the observed data into a set of transition probabilities 
as a generic Markovian system, which we refer to as the \emph{learning 
system}. We first state the required structure in a compact, abstract notation and 
later provide a concrete example, which will suffice in many practical cases. 
}
\begin{assumption}[Learning system] \label{assum:learner}
  Given a sequence $\seq{\md}{0}{t}$ sampled from the Markov chain $\w$, we can compute
  \begin{enumerate}
      \item a statistic 
      $\lrn_t: \W^{t+1} \rightarrow \lrnset \subseteq \Re^{n_{s}}$, with $\lrnset$ compact,
      accompanied by a vector of confidence parameters
      $\confb_t = (\idx{\conf_t}{i})_{i=1}^{\nbeta} \in \I \dfn [0,1]^{\nbeta}$,
      \revision{which admit recursive update rules $\lrn_{t+1} = \learner(\lrn_t, \confb_t, \md_t, \md_{t+1})$}{for which there exist some Markovian dynamics $\learner$ and $\confdyn$ such that
      $\lrn_{t+1} = \learner(\lrn_t, \confb_t, \md_t, \md_{t+1})$}
      and
      $\confb_{t+1} = \confdyn(\confb_t)$, $t \in \N$; 
      \item an \emph{ambiguity set}
      $\amb: \lrnset \times \W \times [0,1] \rightrightarrows \simplex_{\nModes}: (\lrn, \md, \conf) \mapsto \amb_{\conf}(\lrn, \md)$,
      mapping
      $\lrn_t$, $ \md_t$ and the component
      $\idx{\conf_{t}}{i}$ to a convex subset of the
      $\nModes$-dimensional probability simplex 
      $\simplex_\nModes$, such that for all
      $t \in \N, \md \in \W$, and for all $i \in \natseq{1}{\nbeta}$,
      \begin{equation} \label{eq:high-confidence}
          \prob[\row{\transmat}{\md} \in \amb_{\idx{\conf_{t}}{i}}(\lrn_t,  \md)] \geq 1 - \idx{\conf_{t}}{i}.
      \end{equation}
  \end{enumerate}
      We will refer to $\lrn_t$ and $\confb_t$ as the learner state and the confidence vector at time $t$, respectively.
  \end{assumption}
  \revision{}{
  \begin{remark}[Learner dynamics $\learner$, $\confdyn$] \label{rem:learner-sys}
      The existence of the dynamics $\learner$ and $\confdyn$ implies that 
      the system with the augmented state consisting of both the original system state-mode pair $(x_t, w_t)$ and the learner-confidence pair
      $(\lrn_t, \confb_t)$, is Markovian.
      This assumption aids the theoretical analysis in \Cref{sec:theory} and 
      is not restrictive in practice, as it essentially only requires that 
      finite memory is needed for the method, which is the case
      for all implementable methods. 
      For concreteness, typical examples for $\learner$ and $\confdyn$, which are valid for many practical use cases, are presented in \Cref{ex:learner-dynamics} and \Cref{ex:confidence-dynamics}, respectively. 
  \end{remark}
  }
  
  \begin{remark}[confidence levels]
    \revision{
      Two points of clarification are in order. 
      First, }{}
    We consider a vector of confidence levels, rather 
      than a single value.
      This is motivated by the fact that one would often wish to assign separate confidence levels to
      ambiguity sets corresponding to the cost function, and to those corresponding to the $\nconst$ chance constraints (See \Cref{def:raocp}). Accordingly, we will assume that $\nbeta = \nconst + 1$. 
    \revision{  
      Second, the confidence levels are completely exogenous to the system dynamics and can in principle be chosen to be any time-varying sequence satisfying the technical conditions discussed further (see \Cref{prop:conditions-confidences} and \Cref{assum:confidences}).
      The requirement that the sequence $(\confb_{t})_{t\in\N}$ can be written as the trajectory of a time-invariant dynamical system
      serves to facilitate theoretical analysis of the proposed 
      scheme through dynamic programming.
    }{}
  \end{remark}
  In order to ensure reasonable behavior of the learning system, we 
  impose the following restrictions 
  on the choice of the learning dynamics and the confidence levels.
  \begin{assumption} \label{assum:stationary}
      There exists a stationary learner state \revision{}{$\lrn^{\sstar}$}
      such that $\lrn^{\sstar} = \learner(\lrn^{\sstar}, \confb, \md, \mdnxt)$, for all $(\confb, \md, \mdnxt) \in \I \times \W^2$, and that from any initial state $s_0$,
      $\lim_{t \to \infty} s_t = s^{\sstar}$, \as.
  \end{assumption}
  \begin{assumption} \label{assum:confidences}
      The confidence dynamics $\confb_{t+1} = \confdyn(\confb_t)$ is chosen such that 
      \revision{
      \(
          \sum_{t=0}^{\infty} \confb_{t} < \infty,
      \)    
      element-wise.
      }{
      \begin{inlinelist*}
        \item \label{cond:conf-summable} \(\sum_{t=0}^{\infty} \confb_{t} < \infty\)
        \item \label{cond:conf-slow}\(\lim_{t\to \infty} \tfrac{\log \confb_t}{t}=0\),
      \end{inlinelist*}
      element-wise.
      }
  \end{assumption}
  \Cref{assum:stationary} imposes that asymptotically, the 
  learner settles down to some value which is no longer 
  modified by additional data.
  It is natural to expect that in such a state, 
  \revision{
  the learner unambiguously models the underlying distribution, as demonstrated, for instance, in \Cref{ex:learner-dynamics-TV}}
  {
  the learner has acquired perfect knowledge of the underlying transition 
  kernel and the ambiguity sets $\amb_{\conf_{t,i}}(\lrn^\star, \md)$, $i \in \natseq{1}{\nbeta}$ have 
  all converged to a singleton. 
  }
  However, \revision{without further 
  assumptions,}{
    this is not necessarily the case. For instance,
  } \revision{one could also consider}{} the trivial case where 
  $\lrnset = \{s^{\sstar}\}$ and
  $\amb_{\beta}(\lrn, \md) = \simplex_{\nModes},\, \forall (\lrn, \md) \in \lrnset \times \W$ \revision{}{satisfies \Cref{assum:stationary}, but under these conditions}\revision{, in which case}{}, no learning occurs and, in fact, a robust \ac{MPC} scheme is recovered.
  In \Cref{sec:consistency}, we will pose an additional 
  assumption on the learning system, which excludes this 
  case, but allows us to show consistency of the \revision{data-driven}{learning} controller.
  
  \Cref{assum:confidences} states that the probability of obtaining an ambiguity set that contains the true conditional distribution (expressed by \eqref{eq:high-confidence}) increases sufficiently fast \revision{}{(condition \ref{cond:conf-summable})}. This assumption will be of crucial importance in showing stability (see \Cref{sec:stability}).
  \revision{}{In addition, it places a lower bound on the convergence rate 
  of the confidence levels (condition \ref{cond:conf-slow}), which 
  is crucial in establishing asymptotic consistency of the scheme (see \Cref{sec:consistency}), 
  since it will allow convergence of the ambiguity sets, as we discuss in \Cref{rmrk:radius-state}.
  }
  To fix ideas, we keep the following example in mind as a suitable choice for the confidence dynamics throughout the article.
  \begin{example}[Confidence dynamics] \label{ex:confidence-dynamics}
      A suitable family of sequences for the confidence levels satisfying \Cref{assum:confidences} (assuming $n_\conf = 1$ for simplicity\footnote{For $n_\conf > 1$, the same construction can be repeated element-wise.})
      is obtained as 
      \begin{equation} \label{eq:conf-time-dependent}
          \confb_t = b (1+t)^{-q},\, t \in \N,
      \end{equation}
      with parameters $0 < b \leq 1$, $q > 1$. \revision{}{Indeed, as \rerevision{$q>0$}{$q>1$}, this sequence is summable, and furthermore 
      \[ 
        \lim_{t \to \infty} \frac{- q \log (b (1+t))}{t} = 0. 
      \]
      Using $\eqref{eq:conf-time-dependent}$, a straightforward calculation reveals that $\confb_t$ can be updated recursively: 
          \[ 
              \begin{aligned}
                \confb_{t+1}^{-\nicefrac{1}{q}} &= b^{-\nicefrac{1}{q}} (2 + t)
                = b^{-\nicefrac{1}{q}} + \confb_{t}^{-\nicefrac{1}{q}}\\
              \iff \confb_{t+1} &= (b^{-\nicefrac{1}{q}} + \confb_{t}^{-\nicefrac{1}{q}})^{-q}\\
              &= b \confb_t (b^{\nicefrac{1}{q}} + \confb_{t}^{\nicefrac{1}{q}})^{-q} \nfd C(\confb_t),
              \end{aligned}
          \]
      Thus, it additionally satisfies the requirements of \Cref{assum:learner}.}
  \end{example}

  The learner state $\lrn_t$ will in most practical cases 
  be composed of a \revision{sufficient statistic}{data-driven estimator} for the transition 
  kernel and some parameter calibrating the size of the 
  ambiguity set, based on statistical information.
  \revision{}{
  Indeed, ambiguity sets are very often defined as the set of distributions that lie within some radius from an empirical estimate using a particular distance metric or divergence.
  We will refer to such ambiguity sets as 
  \emph{divergence-based} ambiguity sets.
  For the current setting concerning finitely supported distributions, two notable examples of such divergences are the \ac{TV} metric 
  \cite{schuurmans_SafeLearningBasedControl_2019,jiang_RiskAverseTwoStageStochastic_2018,rahimian_EffectiveScenariosMultistage_2021} and the \ac{KL} divergence
   \cite{parys_DataDecisionsDistributionally_2020}. 
  In the following section, we show that these divergences can be used 
  to design a learning system satisfying our assumptions, and illustrate that from these two cases, several other divergence-based ambiguity sets 
  can be constructed.    
  }
  \revision{For general, continuous distributions, popular choices for the distance metric/divergence include the Wasserstein distance \cite{mohajerinesfahani_DatadrivenDistributionallyRobust_2018,yang_WassersteinDistributionallyRobust_2018} or moment-based ambiguity sets \cite{delage_DistributionallyRobustOptimization_2010b,coppens_DatadrivenDistributionallyRobust_2020}, where the first two moments of the distributions are confined to a ball around the empirical estimate.}{}
  
\revision{

\sidenote[
For the setting involving finitely supported distributions, \cite{wang_LikelihoodRobustOptimization_2016a} 
proposes \emph{likelihood regions}:
ambiguity sets containing 
all distributions with respect to which the likelihood of observed data is larger than some threshold $\alpha$.
\cite{wang_LikelihoodRobustOptimization_2016a} provides a data-driven estimate for $\alpha$ to satisfy a condition similar to \eqref{eq:high-confidence} using asymptotic results.
However, a modification 
to provide finite sample guarantees is straightforward. 
Closely related to this family of ambiguity sets are defined by considering distributions that are close to the empirical distribution 
as measured by the \ac{KL} divergence. Depending on the ordering 
of the arguments in the KL divergence one either obtains the ambiguity 
set proposed in \cite{parys_DataDecisionsDistributionally_2020} or 
the ambiguity set corresponding to the \emph{entropic value-at-risk} \cite{ahmadi-javid_EntropicValueatRiskNew_2012}.
]{Rewrite/remove: not very useful.}

Furthermore ambiguity sets defined as balls in the \ac{TV} metric are 
quite commonly used \cite{schuurmans_SafeLearningBasedControl_2019,jiang_RiskAverseTwoStageStochastic_2018,sun_ConvergenceAnalysisDistributionally_2016}.
\sidenote[
More generally, \cite{ben-tal_RobustSolutionsOptimization_2012,bayraksan_DataDrivenStochasticProgramming_2015} provide tractable formulations of linear programs under ambiguity, considering 
the broad class of $\phi$-divergences, which include the \ac{KL} divergence 
and \ac{TV} distance as special cases. However, the parameters controlling 
the size of the ambiguity sets are calibrated on data using asymptotic results.]{Rev. 4: unclear.}

\sidenote[In what follows, we will focus on the \ac{KL} and \ac{TV} ambiguity 
sets and show how they satisfy \Cref{assum:learner}.]{And others}
}{}

\subsection{Divergence-based ambiguity sets} \label{sec:divergence-based-ambiguity-sets}

\begin{table*}[t]
    \centering
    \caption{Overview of properties of common divergence-based ambiguity sets.}
    \label{tab:overview}
    \begin{tabular}{cccc}
        \toprule\\
      Divergence & $\mathcal{D}(\hat{p}, p)$  & radius $r(\nsample,\conf)$ & Conic representation\\
      \midrule
    Total variation (TV) 
        & $\nrm{p - \hat{p}}_1$ 
            &  $2 \sqrt{\rTV(\nsample, \conf)}$
                & Linear \\  
    Kullback-Leibler (KL)    
        &$\DKL(\hat{p},p)$
            & $\rKL(\nsample, \conf)$
                & Exponential    \\ 
    Jensen-Shannon  (JS)
        & $\nicefrac{1}{2}\left(  
            \DKL\left(
                \hat{p}, \tfrac{p+\hat{p}}{2}
                \right) 
            + \DKL\left( 
                \hat{p}, \tfrac{p+\hat{p}}{2}
            \right)
            \right)$
                & $\tfrac{1}{2} \rKL(\nsample, \conf)$  
                    & Exponential       
    \\  
    (Squared) Hellinger (H)
        & $\displaystyle \sum_{i\in \W} \left(
            \sqrt{p}_i - \sqrt{\hat{p}_i}
            \right)^2$ 
            & $\rKL(\nsample, \conf)$
                & Quadratic \\
    Wasserstein$^{\hyperlink{wasserstein}{\star}}$ (W)
        & $\displaystyle 
        \min_{\Pi \in \Re^{\nModes \times \nModes}_+} \Big\{
            \sum_{i,j\in \W} \Pi_{ij} \elem{\Wker}{i}{j} \mid \Pi \1_{\nModes} = p, \Pi^\top \1_{\nModes} = \hat{p} 
        \Big \}$ 
            & $\max_{i,j \in \W} \elem{\Wker}{i}{j} \sqrt{\rTV(m, \conf)}$
            & Linear \\
    \bottomrule\\
    \multicolumn{4}{l}{
        \footnotesize
        \smash{
            \raisebox{0.7\normalbaselineskip}{
                \hypertarget{wasserstein}{
                    \rerevision{}{\textit{$^\star$Assumes $\W$ is a metric space. $K \in \Re^{\nModes \times \nModes}$ is a symmetric distance kernel with $\elem{K}{i}{j} = \dist(i,j),\; \forall i,j \in \W$.}}
                }
            }
        }
    }
    \end{tabular}

  \end{table*}
Our goal is to obtain for each mode $\md$ of the Markov chain, a data-driven subset of the probability simplex, containing the $\md$th row of the transition kernel $\transmat$ with high probability. 
Given a sequence $\seq{\DR{w}}{1}{t} \in \W^{t}$ of $t \in \N$ samples 
drawn from the Markov chain, $\nModes$ individual datasets
$\hat{W}_{t,i} \dfn \{
    \DR{\md}_{k+1} \mid 
        \DR{\md}_{k} = i,
        k \in \natseq{1}{t}
    \}$,
$i \in \W$ 
can be obtained by partitioning the set of observed transitions by the mode they originated in. 
As such, each $\hat{W}_{t,i}$ contains $t_i$ i.i.d. draws from the distribution $\row{\transmat}{i}$. Ambiguity sets can now be constructed for each individual row $i$, using concentration inequalities based 
on the data in $\hat{W}_{t,i}$.\revision{See, for instance, \cite{wainwright_HighdimensionalStatisticsNonasymptotic_2019,boucheron_ConcentrationInequalitiesNonasymptotic_2013} for more details on related techniques.}{}

With this set-up, we now consider the following \revision{broad class of ambiguity sets. In the remainder of this section, we will for ease of notation consider a scalar confidence level $\conf_t \in (0,1]$.}{instance of a learning system}.
\revision{}{
\begin{definition}[Empirical learner]
  Let the learner state be composed as $\lrn_t = (\vect\hat{\transmat}_t,  \revision{\rAmb_t}{ \gamma_t}) \in \revision{}{\lrnset = } \simplex_{\nModes}^{\nModes} \times \revision{\Re_+^\nModes}{[0,1]^{\nModes}}$, where $\hat{\transmat}_t$ denotes the empirical transition probability matrix at time $t$, that is, 
  \[ 
    \elem{\hat{\transmat}_t}{i}{j} = 
    \begin{cases}
      \tfrac{1}{t_i} \sum_{w \in \idx{\hat{W}_t}{i}} \1_{w=j} & \text{if } t_i > 0 \\
      \nicefrac{1}{\nModes} & \text{Otherwise,}
    \end{cases}
  \]
  and $\gamma_{t} = (\tfrac{1}{t_i+1})_{i \in \W}$ is a vector containing the inverse of the mode-specific sample sizes.\footnote{
    The inversion results in simpler updates and renders $\lrnset$ robustly positive invariant, i.e, $\lrn_t \in \lrnset \implies \lrn_{t+1} \in \lrnset$.
  }
\end{definition}
}
For this instance of a learning system, we can now easily derive an explicit characterization of $\learner$.
\revision{}{
  \begin{example}[Dynamics of the empirical learner] \label{ex:learner-dynamics}
    The learner state $\lrn_t$ is composed of $\lrn_t = (\vect \hat{P}_t, \gamma_t)$.
    \newcommand{\idvec}[1]{\bm{e}_{#1}}
    For the update of the empirical distribution $\hat{P}_t$, note that if $w_{t} \neq i$, then trivially, $\row{\hat{P}_{t+1}}{i} =\row{\hat{P}_{t}}{i}$. Otherwise, 
    we may use the following well-known construction. Let $\idvec{w} \in \Re^{\nModes}$ denote the $w$'th standard basis vector in $\Re^d$, then 
    \begin{align*}
          \nonumber 
          \row{\hat{P}_{t+1}}{i} &= \tfrac{1}{t_i+1} \tsum_{w \in \hat{W}_{t+1,i}}  \idvec{w} \\ 
          &= 
          \nonumber 
          \tfrac{1}{t_i+1} \left( \tsum_{w \in \hat{W}_{t,i}} \idvec{w} + \idvec{w_{t+1}} \right) \\ 
          \nonumber 
          &= 
          \tfrac{1}{t_i+1} \left( t_i \row{\hat{P}_{t}}{i} + \idvec{w_{t+1}}
          \right) \\ 
          &= 
            (1 - \gamma_{t,i}) \row{\hat{P}_t}{i} + \gamma_{t,i} \idvec{w_{t+1}}.
  \end{align*}
  Thus, for all $i \in \W$, we define 
    \begin{equation}\label{eq:deriv-P-update}
      \learner_{1,i}(\hat{P}_t, \gamma_{t}, \md_t) \dfn \begin{cases}
        (1 - \gamma_{t,i}) \row{\hat{P}_t}{i} + \gamma_{t,i} \idvec{w_{t+1}} & \text{if } \md_{t} = i\\
        \row{\hat{P}}{t,i} & \text{otherwise.} 
      \end{cases}
    \end{equation}
  Similarly, $\gamma_{t+1,i} = \gamma_{t, i}$ if $\md_{t+1} \neq i$. Otherwise, it follows from the definition of $\gamma_{t,i}$ that 
    \(
    \gamma_{t+1,i} = \tfrac{1}{t_i + 2} 
    \implies  \gamma_{t+1,i} = \tfrac{\gamma_{t,i}}{1 + \gamma_{t,i}}, 
    \)
  resulting in 
  \begin{equation} \label{eq:deriv-update-gamma}
      \learner_{2,i}(\gamma_{t}, \md_{t+1}) \dfn \begin{cases}
        \tfrac{\gamma_{t,i}}{1 + \gamma_{t,i}} & \text{if }w_{t+1} = i\\
        \gamma_{t,i} & \text{otherwise.}  
      \end{cases}
  \end{equation}
  
  Concatenating \eqref{eq:deriv-P-update}--\eqref{eq:deriv-update-gamma}, 
  we obtain the Markovian update required by \Cref{assum:learner}
  \[ 
    \learner(\lrn_t, \md_t,\md_{t+1}) = \big( (\learner_{1,i}(\hat{P}_t, \gamma_t, \md_t))_{i \in \W}, (\learner_{2,i}(\gamma_{t}, \md_{t+1}))_{i \in \W} \big).
  \] 
  Furthermore, this system satisfies \Cref{assum:stationary}. Indeed, given ergodicity of the Markov chain (\Cref{assum:ergodicity}),
  the Borel-Cantelli lemma \cite[Thm. 4.3]{billingsley_ProbabilityMeasure_1995} in conjunction with \cite[Lem. 6]{wolfer_MinimaxLearningErgodic_2019} guarantees that with probability 1,
  there exists a finite time $T$, such that for all $t > T$ and for all $i \in \W$, it holds that $t_{i} \geq c t$,
  where $c > 0$ is a constant depending on specific properties of the Markov chain, and $t_i$ denotes the number of visits to mode $i$. That is, all modes are visited infinitely often, 
  and as a result, both 
  $\lim_{t \to \infty} \gamma_{t} = 0$ and $\lim_{t \to \infty} \DR{P}_t = \transmat$, which are indeed fixed points of \eqref{eq:deriv-P-update}--\eqref{eq:deriv-update-gamma}.
  \end{example}
}
\revision{}{
We can now associate with the newly defined empirical learner 
the following wide class of ambiguity sets, which take the form of 
a ball around the empirical estimate in some given statistical divergence. 
}
\begin{definition}[Divergence-based ambiguity set] \label{def:divergence-amb}
  Consider the empirical learner with state $\lrn_t = (\vect{\hat{\transmat}_t, \gamma_{t}})$.
  We say that an ambiguity set $\amb_{\conf_t}(\lrn_t, \md)$ is a divergence-based ambiguity set if it can be expressed in the form
  \[
    \amb_{\conf_t}(s_t, \md) 
    \dfn \{ 
      p \in \simplex_{\nModes}
      \mid 
      \D{} (\row{\hat{\transmat}_t}{\md}, p) 
      \leq 
      \revision{
        \idx{\rAmb_t}{\md}
      }{
        \rAmb(\idx{\gamma_t}{\md}^{-1} - 1, \beta_{t})}\}, \, \forall\md \in \W\]
  where $\D{}: \simplex_{\nModes} \times \simplex_{\nModes} \to \Re_+$
  is 
  some statistical divergence \revision{.}{ and $\rAmb: \Re_+ \times [0,1] \to \Re_+$ is a given function that returns a radius, given a sample size and a confidence level.}
\end{definition}

\revision{Statistically meaningful values for the radius $\rAmb_t$ under different choices of divergences can be obtained using the following 
standard results.}{
Statistically meaningful values for the radius $\rAmb$ under different choices of divergences can be obtained using the following 
standard results.
}

\begin{proposition}[Concentration inequalities] \label{thm:concentrations}
    Let $p \in \simplex_{\nModes}$ denote a distribution on the probability simplex and
    $\hat{p} = \tfrac{1}{\nsample} \sum_{t=0}^{\nsample-1} (\1_{w_{t}=i})_{i=1}^{\nModes}$ the empirical distribution based on $\nsample$ \iid{} draws $\md_t \sim p$. Then, 
    $\prob\big[(\tfrac{1}{2} \nrm{p - \hat{p}}_1)^2 > \rTV(\nsample, \conf)\big] \leq \conf$, with 
    \begin{equation} \label{eq:TV-radius}
      \rTV(\nsample, \conf) = 
        \frac{\nModes \log 2 - \log \conf}{2\nsample}.
    \end{equation}
    Similarly, it holds that
    \(
        \prob[ \DKL(\hat{p}, p) > \rKL(\nsample, \conf)] \leq \conf, 
    \) with 
    \begin{equation} \label{eq:KL-radius}
       \rKL(\nsample, \conf) = \frac{d \log \nsample - \log \conf}{\nsample},
    \end{equation}
    where $\DKL(p, q) \dfn \sum_{i=1}^{\nModes} p_i \log \tfrac{p_i}{q_i}$ 
    denotes the \ac{KL} divergence from $q$ to $p$.
  \end{proposition}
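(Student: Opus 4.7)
My plan is to handle the two bounds separately by invoking classical concentration inequalities for the empirical distribution of a multinomial, then inverting the resulting exponential tail bounds to recover the stated radii.

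For the total variation part, I would appeal to the Bretagnolle--Huber--Carol inequality, which asserts that for $\nsample$ i.i.d.\ draws from $p \in \simplex_{\nModes}$,
\[
    \prob\bigl[\nrm{p - \hat{p}}_1 \geq \epsilon\bigr] \leq 2^{\nModes} \exp(-\nsample \epsilon^2 / 2).
\]
Setting the right-hand side equal to $\conf$ and solving gives $\epsilon^2/4 = (\nModes \log 2 - \log \conf)/(2\nsample)$, which upon recognising that $\epsilon/2 = \tfrac{1}{2}\nrm{p-\hat{p}}_1$ yields exactly the claimed $\rTV(\nsample, \conf)$.

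For the KL bound, the natural tool is the method of types from information theory. Each realised empirical distribution $\hat{p}$ is itself a type, it has probability at most $\exp(-\nsample \DKL(\hat{p}, p))$ under the product measure $p^{\otimes \nsample}$, and the number of distinct types over an alphabet of size $\nModes$ with $\nsample$ samples is bounded above by $(\nsample+1)^{\nModes - 1} \leq \nsample^{\nModes}$ for $\nsample$ sufficiently large. A union bound over types then gives $\prob[\DKL(\hat p, p) > r] \leq \nsample^{\nModes} \exp(-\nsample r)$; setting this tail equal to $\conf$ and solving for $r$ delivers $\rKL(\nsample, \conf)$.

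I do not anticipate a real obstacle here, since both inequalities are textbook results and the rest reduces to algebraic inversion. The only point that needs care is the combinatorial factor on the number of types, which is responsible for the $\nModes \log \nsample$ term appearing in $\rKL$; a tighter count such as $\binom{\nsample + \nModes - 1}{\nModes - 1}$ would sharpen the constant but leave the asymptotic rate unchanged.
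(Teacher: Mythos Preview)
Your proposal is correct and matches the paper's treatment: the paper does not give a self-contained proof but simply attributes \eqref{eq:TV-radius} to the Bretagnolle--Huber--Carol inequality and \eqref{eq:KL-radius} to the method of types, exactly the two tools you invoke, and even mentions the same $\binom{\nsample+\nModes-1}{\nModes-1}$ refinement you note at the end. The only minor caveat is that your type-count bound $(\nsample+1)^{\nModes-1}\le \nsample^{\nModes}$ holds only for $\nsample$ large enough, so strictly speaking one should use the cruder bound $(\nsample+1)^{\nModes}$ or the binomial coefficient directly to cover all $\nsample$; this is inconsequential for the paper's purposes.
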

  The bound on the \ac{TV} distance \eqref{eq:TV-radius} is known as the Bretagnolle-Huber-Carol inequality \cite[Thm. A.6.6]{vaart_WeakConvergenceEmpirical_2000}.
  \begin{remark}\label{rem:pinsker}
    Expression \eqref{eq:KL-radius} for the \ac{KL} radius is a well-known result from the field of information theory, obtained through the so-called method-of-types \cite{csiszar_MethodTypes_1998,cover_ElementsInformationTheory_2006}. A slight improvement can be obtained by replacing $\nModes \log \nsample$ by $\log \binom{\nsample+\nModes-1}{\nModes - 1}$. Moreover, in \cite{mardia_ConcentrationInequalitiesEmpirical_2019}, an even sharper result for \eqref{eq:KL-radius} is derived. In fact, this improved concentration bound in the \ac{KL} divergence was used in the same work to improve upon the \ac{TV} concentration bound \eqref{eq:TV-radius} for $\frac{\nsample}{\nModes} \ll 1$, using Pinsker's inequality
    \cite{csiszar_InformationTheoryCoding_2011},
    which relates the \ac{TV} distance between distributions $p, q \in \simplex_{\nModes}$ to the \ac{KL} divergence as 
     \(
       \nrm{p-q}_{1}^{2} \leq 2 \DKL(p,q).
     \)   
    \revision{These improvements remain compatible with the framework but would complicate notations in subsequent analysis. For this reason,
    we will define the following divergence-based ambiguity sets using \Cref{thm:concentrations}.}{
    Of course, these improved bounds can be readily used in practice to replace those in \cref{thm:concentrations}.
    However, for the theoretical discussion, these modifications are inconsequential. For this reason, we opt to develop the ideas for the simpler, more commonly used forms.
    }
    \end{remark}

  \revision{}{
  Besides Pinsker's inequality, there exist several other inequalities relating different statistical divergences (see for instance
  \cite{gibbs_ChoosingBoundingProbability_2002}
  for a comprehensive overview). Based on these relations, one can derive from \cref{thm:concentrations} several divergence-based ambiguity sets defined through other 
  statistical divergences; 
  \rerevision{}{
  For instance, since the squared Hellinger divergence is upper bounded by the \ac{KL} divergence, \eqref{eq:KL-radius} can be used as 
  a radius for Hellinger divergence-based ambiguity sets.
  }
  A summary of the resulting radii is provided in \Cref{tab:overview}.
  The rightmost column in this table refers to the 
  conic representation of the induced ambiguity sets (cf. \eqref{eq:conic-representation}), which determines the complexity of the resulting optimal control problems (see Appendix~\ref{sec:conic-reps} for more details).
  \rerevision{}{Other works that have used these divergences (which belong to the class of $\phi$-\textit{divergences})
  for distributionally robust optimization are {\cite{bayraksan_DataDrivenStochasticProgramming_2015,ben-tal_RobustSolutionsOptimization_2012,yanikoglu_SafeApproximationsAmbiguous_2012}}. 
  In these works, however, the radii are either selected as a tuning parameter or calibrated using asymptotic arguments, 
  leading to approximate ambiguity sets, 
  which satisfy the coverage condition \eqref{eq:high-confidence} only as the sample size tends to infinity.
  By contrast, the radii given in \Cref{tab:overview} are valid for any sample size.}
  }

We conclude the section by proposing a useful extension of the learner 
state in the case of divergence-based ambiguity sets. 

\begin{remark}[Radius as part of the learner state] \label{rmrk:radius-state}
  For divergence-based ambiguity sets, it is often convenient to augment the learner state $\lrn_t = (\vect \hat{P}_t, \gamma_{t,i})$ with the computed radii $\rAmb_{t,i} \dfn \rAmb(\gamma_{t,i}^{-1} - 1, \beta_t)_{i \in \W}$, for which the recursive update is obtained simply by composition of $\rAmb$ with the previously designed $\learner$ and $\confdyn$.
  It can be easily verified using \Cref{thm:concentrations} that this quantity also converges to the fixed point $\lim_{t\to \infty} r_{t,i} = 0$, $\forall i$. Indeed, 
  Recall from \Cref{ex:learner-dynamics} that $\gamma_{t,i}^{-1} - 1 \sim t$ as $t \to \infty$. Using a radius function $r$ based on either $\eqref{eq:TV-radius}$ or $\eqref{eq:KL-radius}$, we obtain
  $$
    \lim_{t \to \infty} \rAmb_{t,i} = \lim_{t \to \infty}\rAmb(t, \beta_t) = \lim_{t \to \infty} \tfrac {-\log \beta_t}{t} = 0,
  $$
  where the last equality follows from \Cref{assum:confidences}.
\end{remark}\section{Learning model predictive control} \label{sec:data-driven-MPC}

Given a learning system satisfying \Cref{assum:learner}, we define the augmented state $y_t = (x_t, \lrn_t, \confb_t) \in \Y \dfn \Re^{\ns} \times \lrnset \times \I$, which evolves over time according to the dynamics
\begin{equation} \label{eq:augmented-dynamics}
    y_{t+1} = \fa(y_t, \md_{t}, u_t, \md_{t+1}) \dfn \smallmat{f(x_t, u_t, \md_{t+1})\\ \learner(\lrn_t, \confb_t, \md_t, \md_{t+1})\\\confdyn(\confb_t) },
\end{equation}
with $\md_{t+1} \sim \row{\transmat}{\md_t}$, 
for $t \in \N$. Furthermore, it will be convenient to define the process  $z_t = (y_t, \md_t) \in \augset \dfn \Y \times \W$.
Consequently, the objective is now to obtain a feedback law $\law: \augset \rightarrow \Re^{\na}$. To this end, 
we will formulate a \ac{DR} counterpart to the stochastic \ac{OCP} \eqref{eq:stochastic-OCP}, in which the expectation operator in the cost and the conditional probabilities in the constraint will be replaced by operators that account for ambiguity in the involved distributions. 

\subsection{Ambiguity and risk}
In order to reformulate the cost function \eqref{eq:cost-function-stochastic}, 
we first introduce an ambiguous conditional expectation operator, leading to a formulation akin to the Markovian risk measures utilized in \cite{sopasakis_RiskaverseModelPredictive_2019,ruszczynski_RiskaverseDynamicProgramming_2010b}. 
Consider a function $\rv: \augset \times \W \rightarrow \barre$, defining a stochastic process $(\rv_t)_{t \in \N} = (\rv(z_t, \md_{t+1}))_{t\in\N}$ on $(\Omega, \F, \prob)$, and suppose that 
the augmented state $z_t = z = (x, \lrn, \confb, \md)$ is given. Let $\conf \in [0,1]$ denote an arbitrary component of $\confb$. The \textit{ambiguous} conditional expectation of $\rv(z, \mdnxt)$, given $z$ is then
\begin{equation} \label{eq:dr-expectation}
    \begin{aligned}
       \lrisk{\lrn}{\md}{\conf}[\rv(z,\mdnxt)] &\dfn \max_{p \in \amb_{\conf}(\lrn, \md)} \E_{p}[\rv(z,\mdnxt) | z] \\
        &= \max_{p \in \amb_{\conf}(\lrn, \md)} \tsum_{\mdnxt \in \W} p_\mdnxt \rv(z,\mdnxt).
    \end{aligned}
\end{equation}
Trivially, it holds that if the $w$'th row of the transition matrix lies in the corresponding ambiguity set, i.e., $\row{\transmat}{\md} \in \amb_{\conf}(\lrn,\md)$, then
\ifJournal
\begin{equation}\label{eq:risk-upper-bound}
    \begin{aligned}
    \lrisk{\lrn}{\md}{\conf}[\rv(z,\mdnxt)] &\geq \E_{\row{\transmat}{\md}} [\rv(z,\mdnxt) \mid z] \\
    &= \textstyle \sum_{\mdnxt \in \W} \elem{\transmat}{\md}{\mdnxt} \rv(z,\mdnxt).
    \end{aligned}
\end{equation}
\else 
\begin{equation*} 
    \begin{aligned}
    \lrisk{\lrn}{\md}{\confb}[\rv(z,\mdnxt)] &\geq \E_{\row{\transmat}{\md}} [\rv(z,\mdnxt)|z] 
    = \textstyle \sum_{\mdnxt \in \W} \elem{\transmat}{\md}{\mdnxt} \rv(z,\mdnxt).
    \end{aligned}
\end{equation*}
\fi
Note that the function $\lrisk{\lrn}{\md}{\conf}$ defines a coherent risk measure \cite[Sec. 6.3]{shapiro2009lectures}. We say that $\lrisk{\lrn}{\md}{\conf}$ is the risk measure \emph{induced by} the ambiguity set $\amb_{\conf}(\lrn,\md)$. 

A similar construction can be carried out for the chance constraints \eqref{eq:chance-constraint}. 
We robustify the average value-at-risk with respect to the reference distribution, defining  
\begin{equation} \label{eq:ambiguous-chance-constraint}
    \lriskc{\lrn}{\md}{\conf}{\DR{\alpha}}[\rv(z,\mdnxt)] \dfn \hspace{-6pt} \max_{p \in \amb_{\conf}(\lrn,\md)} \hspace{-6pt} \AVAR^p_{\DR{\alpha}}[\rv(z,\mdnxt) \mid z] \leq 0.
\end{equation}
The function $\lriskc{\lrn}{\md}{\conf}{\DR{\alpha}}$ in turn defines a coherent risk measure.
Note that we have replaced the $\AVAR$ parameter $\alpha$ 
by $\DR{\alpha}$. The reason for this is that the ambiguity set 
only contains the true distribution with high probability. 
Considering this fact, it is natural to expect that $\alpha$ needs 
to be tightened to some extent in order to ensure that the original 
chance constraint remains satisfied. We make this precise in the following result. 

\begin{proposition} \label{prop:conditions-confidences}
    Let $\conf, \alpha \in [0,1]$, be given values with $\conf < \alpha$.
    Consider the random variable $\lrn: \Omega \rightarrow \lrnset$, denoting an (a priori unknown) learner state satisfying $\Cref{assum:learner}$,
    i.e., $\prob[\row{\transmat}{\md} \in \amb_{\conf}(s,\md)] \geq 1 - \conf$.
    If the parameter $\DR{\alpha}$ is chosen to satisfy 
    \(
        0 \leq \hat{\alpha} \leq \frac{\alpha - \conf}{1 - \conf} \leq 1, 
    \)
    then, for an arbitrary function $g : \augset \times \W \rightarrow \Re$, the following implication holds:
    \begin{equation} \label{eq:implication-prop3-1}
        \begin{aligned}
        \lriskc{\lrn}{\md}{\conf}{\hat{\alpha}}[g(z,\mdnxt)] \leq 0, \, \as
        \Rightarrow
        \prob[g(z,\mdnxt) \leq 0 \mid x,\md] \geq 1- \alpha.
        \end{aligned}
    \end{equation}
\end{proposition}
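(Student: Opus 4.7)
The plan is to exploit the definition of $\lriskc{s}{\md}{\conf}{\hat\alpha}$ as a worst-case $\AVAR$ over the ambiguity set, combined with implication \eqref{eq:risk-constraint-implication} and a simple law-of-total-probability argument conditioning on whether the true conditional distribution lies in the ambiguity set.

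First, I would define the ``good'' event $E \dfn \{\row{\transmat}{\md} \in \amb_{\conf}(s,\md)\}$. Under Assumption~\ref{assum:learner} this event satisfies $\prob[E\mid x,\md]\geq 1-\conf$ (since the coverage guarantee \eqref{eq:high-confidence} holds pointwise in $\md$, and $x$ is measurable with respect to the filtration up to the current time and thus provides no additional information about $s$ beyond what $\md$ already carries). On $E$, the true row $\row{\transmat}{\md}$ is feasible in the maximization defining $\lriskc{s}{\md}{\conf}{\hat\alpha}$, so, recalling \eqref{eq:ambiguous-chance-constraint},
\begin{equation*}
    \AVAR^{\row{\transmat}{\md}}_{\hat\alpha}[g(z,\mdnxt)\mid z]
    \leq \lriskc{s}{\md}{\conf}{\hat\alpha}[g(z,\mdnxt)] \leq 0 \quad \text{a.s.}
\end{equation*}

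Next, I would invoke the tight implication \eqref{eq:risk-constraint-implication} conditionally on $z=(x,s,\confb,\md)$. Since $\mdnxt$ is independent of $s$ and $\confb$ given $\md$, its conditional distribution given $z$ coincides with $\row{\transmat}{\md}$, so \eqref{eq:risk-constraint-implication} applied to the function $\mdnxt \mapsto g(z,\mdnxt)$ (treating $z$ as fixed parameters) yields
\begin{equation*}
    \prob[g(z,\mdnxt)\leq 0 \mid z] \geq 1-\hat\alpha \quad \text{on } E.
\end{equation*}

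Finally, by the tower property and the trivial lower bound on $E^c$,
\begin{equation*}
\begin{aligned}
    \prob[g(z,\mdnxt)\leq 0\mid x,\md]
    &= \E\big[\prob[g(z,\mdnxt)\leq 0\mid z]\,\big|\,x,\md\big]\\
    &\geq (1-\hat\alpha)\,\prob[E\mid x,\md] \geq (1-\hat\alpha)(1-\conf).
\end{aligned}
\end{equation*}
The bound $\hat\alpha\leq\tfrac{\alpha-\conf}{1-\conf}$ is equivalent (when $\conf<1$) to $(1-\hat\alpha)(1-\conf)\geq 1-\alpha$, which yields the desired implication \eqref{eq:implication-prop3-1}.

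The routine calculations are all trivial; the one nontrivial point is justifying the step $\prob[E\mid x,\md]\geq 1-\conf$ from the assumption \eqref{eq:high-confidence}, which is formulated in an unconditional (or at best $\md$-conditional) form. I would address this by invoking the fact that \eqref{eq:high-confidence} is required to hold for every fixed $\md\in\W$, so conditioning on the current $\md$ preserves the bound, and additionally noting that $x$ adds no new information about $s$ beyond the mode history already encoded through $\md$. Everything else follows by plugging \eqref{eq:risk-constraint-implication} into a standard two-event decomposition.
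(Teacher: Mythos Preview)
Your proof is correct and follows essentially the same route as the paper's own argument: condition on the event $E=\{\row{\transmat}{\md}\in\amb_{\conf}(s,\md)\}$, use \eqref{eq:ambiguous-chance-constraint} together with \eqref{eq:risk-constraint-implication} to obtain $\prob[g(z,\mdnxt)\leq 0\mid x,\md,E]\geq 1-\hat\alpha$, then apply the law of total probability and the algebraic equivalence $(1-\hat\alpha)(1-\conf)\geq 1-\alpha$. Your version is in fact slightly more explicit than the paper's, which glosses over the conditioning on $(x,\md)$ in the coverage bound that you single out as the one nontrivial step.
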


\begin{proof}

    If \(
        \lriskc{\lrn}{\md}{\conf}{\hat{\alpha}}[g(z,\mdnxt) ] \leq 0 
    \), \as, then \eqref{eq:risk-constraint-implication} and \eqref{eq:ambiguous-chance-constraint} imply that 
    \[
        \prob[g(z,\mdnxt) \leq 0 \mid x,\md,\row{\transmat}{\md} \in \amb_{\conf}(\lrn, \md)] \geq 1 - \DR{\alpha}, \as  
    \] Therefore,
    \[
        \begin{aligned}
            \ifSingleColumn\else&\fi
            \prob[g(z,\mdnxt) \leq 0 \mid x,\md] \ifSingleColumn\else\\\fi
            &\geq 
            \prob[g(z,\mdnxt) \leq 0 \mid x,\md,\row{\transmat}{\md} \in \amb_{\conf}(\lrn, \md)]
             \prob[\row{\transmat}{\md} \in \amb_{\conf}(\lrn, \md)] \\ 
            &\geq (1-\DR{\alpha})(1-\conf).
        \end{aligned}
    \]
    Requiring that $(1-\DR{\alpha})(1-\conf) \geq (1 - \alpha)$ then immediately yields the sought condition.
\end{proof}
    Notice that the implication \eqref{eq:implication-prop3-1} in \Cref{prop:conditions-confidences} provides an \textit{a priori} guarantee,
    since the learner state is considered to be random. 
    In other words, the statement is made before the data is revealed. 
    Indeed, for a \emph{given} learner state $\lrn$ and mode $\md$, the ambiguity set $\amb_{\conf}(\lrn,\md)$ is fixed and therefore, the outcome of the event $E = \{\row{\transmat}{\md} \in \amb_{\conf}(\lrn,\md)\}$ is determined. Whether \eqref{eq:implication-prop3-1} then holds for these fixed values, depends on the outcome of $E$.
    This is naturally reflected through the above condition on $\DR{\alpha}$, which implies that $\DR{\alpha} \leq \alpha$, and thus tightens the chance constraints that are imposed conditioned on a \emph{fixed} $\lrn$. Hence, the possibility that for this particular $\lrn$, the ambiguity set may not include the conditional distribution, is accounted for. This tightening can be mitigated by decreasing $\conf$, at the cost of a larger ambiguity set. A more detailed study of this trade-off is left for future work.

\subsection{Distributionally robust model predictive control}

We are now ready to describe the \ac{DR} counterpart to the OCP \eqref{eq:stochastic-OCP}, which, when solved in receding horizon fashion, yields the proposed \revision{data-driven}{learning} MPC scheme. 

Consider a given augmented state $z = (x, \lrn, \confb, \md) \in \augset$. 
Hereafter, we will assume that $\confb = (\conf, \bar \conf)$, where 
component $\conf$ is related to the cost function and $\bar \conf$ 
is reserved for the constraints. 

We use \eqref{eq:ambiguous-chance-constraint} to define the \ac{DR} set of feasible inputs $\DR{\Ufeas}(z)$ in correspondence to \eqref{eq:chance-constraint}, as 
\begin{equation} \label{eq:DR-constraints}
 \DR{\Ufeas}(z) {=} \left \{ u \in U \sep 
    \lriskc{\lrn}{\md}{\bar \conf}{\DR{\alpha}}[g (x,u,\md,\mdnxt)] \leq 0 \right \}.
\end{equation}
\begin{remark}
The parameter $\DR{\alpha}$ remains to be chosen in relation to the confidence levels $\confb$ and the original violation rates $\alpha$. In light of \Cref{prop:conditions-confidences}, 
$\DR{\alpha}= \tfrac{\alpha - \bar \conf}{1- \bar \conf}$ yields the least conservative choice. This choice is valid as long as it is ensured that $\bar \conf < \alpha$.
\end{remark}

Using \eqref{eq:dr-expectation}, we express the \ac{DR} cost of a policy $\pol = (\pol_{k})_{k=0}^{\hor-1}$ as
\begin{multline} \label{eq:cost-function-risk}
    \DR{\cost}_\hor^{\pol}(z) \dfn \ell(x_0, u_0, \md_0) +
    \lrisk{\lrn_0}{\md_0}{\conf_{0}}
    \big[
        \ell(x_1, u_1,\md_1)\ifSingleColumn\else\\\fi +
         \lrisk{\lrn_1}{\md_1}{\conf_{1}}
        \big[
            \dots + \lrisk{\lrn_{\hor-2}}{\md_{\hor-2}}{\conf_{\hor-2}}\big[
             \ell(x_{\hor-1},u_{\hor-1}, \md_{\hor-1})\\
             + \lrisk{\lrn_{\hor-1}}{\md_{\hor-1}}{\conf_{\hor-1}}[ \DR{\Vf}(x_\hor, \lrn_\hor, \confb_{\hor}, \md_\hor) ] \big]
            \dots 
        \big]
    \big],
\end{multline}
where $z_0=z$, $z_{k+1} =\fa(z_k, u_k, \md_{k+1})$ and $u_{k} = \pol_{k}(z_{k})$, for all $k \in \natseq{0}{\hor-1}$. 
In \Cref{sec:theory}, conditions on the terminal cost $\DR{\Vf}: \augset \rightarrow \barre_+: (x, \lrn, \confb, \md) \mapsto \Vf(x,\md) + \delta_{\DR{\Xf}}(x,\lrn, \confb, \md)$ and its domain are provided in order to guarantee recursive feasibility and stability of the \ac{MPC} scheme defined by the following \ac{OCP}.

\begin{definition}[\acs{\RAOCP}] \label{def:raocp}
Given an augmented state $z \in \augset$, the optimal cost of the \ac{\RAOCP} is 
\begin{subequations} \label{eq:risk-averse-OCP}
\begin{align}
    \DR{\cost}_{\hor}(z) = \min_{\pol} \DR{\cost}_{\hor}^{\pol}(z)
\end{align}
subject to 
\begin{align} \label{eq:risk-averse-constraints}
    (x_0, \lrn_0, \confb_0, \md_0) &= z,\, \pol = (\pol_{k})_{k=0}^{\hor-1}, \\
    z_{k+1} &= (\fa(z_k, \pol_{k}(z_k), \md_{k+1}), \md_{k+1}),\\
    \pol_{k}(z_k) &\in \DR{\Ufeas}(z_k), \label{eq:risk-constraints-OCP}
    \; \forall \seq{\md}{0}{k} \in \W^{k},
\end{align}
for all $k \in \natseq{0}{\hor-1}$.
\end{subequations}
We denote by $\DR{\Pi}_{\hor}(z)$ the corresponding set of minimizers.
\end{definition}
\begin{remark}
    Note that the definition of $\DR{\Vf}$ implicitly imposes the terminal constraint $z_\hor \in \DR{\Xf}$, \as. 
\end{remark}

We now define the \revision{data-driven}{learning} \ac{MPC} law analogously to the stochastic case as 
\begin{equation} \label{eq:MPC-law-DR}
    \DR{\law}_{\hor}(z) = \DR{\pol}_{0}^{\star}(z),
\end{equation}
where $(\DR{\pol}_{k}^{\star}(z))_{k=0}^{\hor-1} \in \DR{\Pi}_{\hor}(z)$.
At every time $t$, the \revision{data-driven}{learning} MPC scheme thus consists of repeatedly
\begin{inlinelist*}
    \item solving \eqref{eq:risk-averse-OCP} to obtain a control action $u_t = \DR{\law}_\hor(z_t)$ and applying it to the system \eqref{eq:system-dynamics}
    \item observing the outcome of $\md_{t+1} \in \W$ and the corresponding next state $x_{t+1}=f(x_t,u_t,\md_{t+1})$
    \item updating the learner state $\lrn_{t+1} = \learner(\lrn_t, \md_t,\md_{t+1})$ and the confidence levels $\confb_{t+1} = \confdyn(\confb_t)$, gradually decreasing the size of the ambiguity sets.
\end{inlinelist*}

\rerevision{
\revision{
\section{Tractable reformulation}
\subsection{Conic risk measures} 
}{
\subsection{Conic reformulations}\label{sec:conic}
}
Since ambiguity sets inducing coherent risk measures are convex by construction, 
many classes of ambiguity sets can be represented 
using conic inequalities.
}{
Note that in its general form, \eqref{eq:risk-averse-OCP} is a non-smooth,
infinite-dimensional optimization problem.
However, provided that the involved risk measures are
\textit{conic risk measures} (as defined by \Cref{def:conic-risk-measure}), 
problem \eqref{eq:risk-averse-OCP} can be reformulated as 
a finite-dimensional, smooth nonlinear program.
}
\begin{definition}[Conic risk measure \cite{sopasakis_risk-averse_2019c}] \label{def:conic-risk-measure}
    We say that an ambiguity set $\amb \subseteq \simplex_{\nModes}$ is conic representable if it can be written in the form
    \begin{equation}\label{eq:conic-representation}
        \amb = \{ p \in \simplex_{\nModes} \mid \exists \nu: Ep + F \nu \leqc{\cone} b\}, 
    \end{equation}
    with matrices $E, F$ and vector $b$ of suitable dimensions, 
    and a proper cone $\cone$. The coherent risk measure induced by a conic representable ambiguity set is called a conic risk measure.
\end{definition}
\rerevision{
These risk measures, referred to as conic risk measures, are of great use for reformulating \acp{\RAOCP} of the form \eqref{eq:risk-averse-OCP}.
}{
Since ambiguity sets inducing coherent risk measures are convex by construction, 
many classes of ambiguity sets can be represented using conic inequalities.
For completeness, we state the conic representations for the ambiguity sets summarized in \Cref{tab:overview}, 
as well as the reformulation of \eqref{eq:risk-averse-OCP}
in Appendix~\ref{sec:conic-reps} and \ref{sec:tractability}, 
respectively.
}
\rerevision{
By definition, a conic risk measure $\rho$ is given as the optimal value of a standard \ac{CP}. Under strong duality, which holds if the \ac{CP} is strictly feasible \cite[Prop. 2.1]{shapiro_DualityTheoryConic_2001}, its epigraph $\epi \rho \dfn \{ (G,\gamma) \in \Re^{\nModes + 1} \mid \gamma \geq \rho[G]\}$ can be characterized as \cite{sopasakis_risk-averse_2019c}
\begin{equation} \label{eq:risk-epigraph-old}
    \ifSingleColumn
    \epi \rho = \{(G,\gamma) \in \Re^{\nModes + 1} \mid \exists y: \trans{E}y=G, \trans{F}y=0, y \geqc{\cone^*} 0, \gamma \geq \trans{b}y \}.\else 
    \epi \rho = \left\{
        (G,\gamma) \in \Re^{\nModes + 1} \sep \begin{matrix}\exists y: \trans{E}y=G, \trans{F}y=0,\\
         y \in \cone^*, \gamma \geq \trans{b}y \end{matrix} 
        \right\}.
    \fi
\end{equation}
\revision{Since the \ac{TV} ambiguity set defined in \Cref{sec:learning} as well as the ambiguity set inducing the average value-at-risk are polyhedra, they are conic representable (taking the nonnegative orthant as the cone $\cone$).
Similarly, the \ac{KL} ambiguity set, and similarly the entropic value-at-risk \cite{ahmadi-javid_EntropicValueatRiskNew_2012} are known 
to be conic representable \cite{sopasakis_risk-averse_2019c,parys_DataDecisionsDistributionally_2020}.}{
All divergence-based ambiguity sets collected in \Cref{tab:overview} admit a conic formulation (see Appendix~\ref{sec:conic-reps}). 
}
Additionally, it is not difficult to show that the worst-case average value-at-risk \eqref{eq:ambiguous-chance-constraint} over a conic representable ambiguity set also 
defines a conic risk measure:
\begin{proposition} \label{prop:conic-robust-avar-old}
    Let $\amb = \{ p \in \simplex_{\nModes} \mid \exists \nu: \bar E p + \bar F \nu \leqc{\cone} \bar b \}$ be a conic-representable ambiguity set. Then, the risk measure
    $\bar \rho = \max_{p \in \amb} \AVAR_{\alpha}^{p}$ is a conic risk measure.
\end{proposition}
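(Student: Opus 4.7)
The plan is to invoke the well-known dual representation of the average value-at-risk (e.g., \cite[Thm.~6.2]{shapiro2009lectures}), which asserts that for $\alpha > 0$,
\[
    \AVAR_{\alpha}^{p}[\rv] = \max_{q \in \simplex_{\nModes}} \{ \trans{q}\rv : \alpha q \leq p \}.
\]
Substituting this into the definition of $\bar\rho$ and exchanging the two maxima yields
\[
    \bar\rho[\rv] = \max_{p \in \amb}\max_{q \in \simplex_{\nModes}}\{\trans{q}\rv : \alpha q \leq p\} = \max_{q \in \bar\amb} \trans{q}\rv,
\]
where the \emph{lifted} ambiguity set is the projection
\(
    \bar\amb = \{q \in \simplex_{\nModes} \mid \exists\, p, \nu:\; \bar E p + \bar F \nu \leqc{\cone} \bar b,\; \alpha q \leq p\}.
\)
Thus, it suffices to exhibit a conic representation of $\bar\amb$ in the sense of \eqref{eq:conic-representation}.

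To this end, I would introduce the augmented auxiliary variable $\nu' = (p,\nu)$ and the product cone $\cone' = \cone \times \Re_+^{\nModes}$, which is proper whenever $\cone$ is. Stacking the two defining inequalities gives
\[
    \bar\amb = \Big\{ q \in \simplex_{\nModes} \;\Big|\; \exists\, \nu':\; \underbrace{\begin{bmatrix} 0 \\ \alpha I_{\nModes} \end{bmatrix}}_{E'} q + \underbrace{\begin{bmatrix} \bar E & \bar F \\ -I_{\nModes} & 0 \end{bmatrix}}_{F'} \nu' \leqc{\cone'} \underbrace{\begin{bmatrix} \bar b \\ 0 \end{bmatrix}}_{b'} \Big\},
\]
which is precisely the form \eqref{eq:conic-representation}. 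Hence $\bar\amb$ is conic representable and $\bar\rho$ is a conic risk measure by \Cref{def:conic-risk-measure}.

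It remains to handle the degenerate case $\alpha=0$ separately, since the dual representation degenerates. In that case, $\AVAR_{0}^p[\rv] = \max_{\mdnxt \in \W} \rv(\mdnxt)$ is independent of $p$, so $\bar\rho[\rv] = \max_{q \in \simplex_{\nModes}} \trans{q}\rv$ is induced by the trivial ambiguity set $\simplex_{\nModes}$, which is itself conic representable (take $E'=0$, $F'=0$, $b' \in \cone'$ arbitrary for any proper cone $\cone'$). The main technical point to watch is the interchange of the two maxima, which is justified since the inner problem defining $\AVAR_{\alpha}^p$ attains its maximum on a compact set for every fixed $p$, and no obstacle arises in showing that the constructed product cone inherits the properness of $\cone$. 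No strong-duality argument is needed here, because we work directly from the primal (dual-representation) side of the AVaR.
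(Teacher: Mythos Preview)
Your proof is correct and follows essentially the same approach as the paper: both use the dual representation of $\AVAR_\alpha^p$ to write $\bar\rho$ as a maximum over a lifted ambiguity set, then absorb $p$ as an auxiliary variable and stack the constraints $\alpha q \leq p$ and $\bar E p + \bar F \nu \leqc{\cone} \bar b$ into a single conic description. The only cosmetic differences are that the paper redundantly encodes the simplex constraints in its $E$ matrix (yielding $\cone = \Re_+^{2(\nModes+1)}$ rather than your $\Re_+^{\nModes}$) and does not treat the case $\alpha = 0$ separately, whereas you do.
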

\begin{proof}
    For any reference distribution $p\in \simplex_\nModes$,
    the ambiguity set $\amb_{\AVAR}$ inducing $\AVAR_{\alpha}^{p}$ can be written in the form 
    \eqref{eq:conic-representation} with 
    $E=\trans{\smallmat{\1_{\nModes}&-\1_{\nModes}&\alpha I&-I}}$,
    $F=0$, $\cone=\Re^{2(\nModes + 1)}_{+}$ the nonnegative orthant,
    and
    $b = \trans{\smallmat{1&-1&\trans{p}&0}}$ 
    (which is of the form $b=b' + Bp$) \cite{sopasakis_risk-averse_2019c}.
    Writing out the definition of $\max_{p \in \amb} \AVAR_{\alpha}^{p}$ and rearranging terms yields 
    \begin{align*}
      &\max_{p \in \amb} \AVAR_{\alpha}^{p}[z] \\
      =& \max_{\mu} \left\{ \trans{\mu}z \sep \exists \nu: \smallmat{E\\0}\mu + \smallmat{-B & 0\\\bar E & \bar F} \nu \leqc{\Re^{2(\nModes + 1)}_{+} \times \cone}
      \smallmat{b' \\ \bar b} \right\},
    \end{align*}
    which is exactly of the form \eqref{eq:conic-representation}.
\end{proof}

Thus, if for all $(\lrn, \md, \conf) \in \lrnset \times \W \times [0,1]$, $\amb_{\conf}(\lrn,\md)$ is conic representable, then $\lrisk{\lrn}{\md}{\conf}$ and $\lriskc{\lrn}{\md}{\conf}{\alpha}$ are conic risk measures.

\revision{
This fact will allow us to leverage \eqref{eq:risk-epigraph} to 
obtain an efficiently solvable reformulation of \eqref{eq:risk-averse-OCP}.
We refer to Appendix~\ref{sec:tractability} for more details on the 
final formulation of the optimal control problem.
}{}
}{}\section{Theoretical analysis} \label{sec:theory}
\subsection{Dynamic programming}
    To facilitate theoretical analysis of the proposed \ac{MPC} scheme, we
    follow an approach similar to \cite{sopasakis_RiskaverseModelPredictive_2019} and represent \eqref{eq:risk-averse-OCP} as a dynamic programming recursion.
    We define the Bellman operator $\T$ as 
    \(
        \T(\hat{\cost})(z) \dfn \hspace{-1pt} \min_{u \in \DR{\Ufeas}(z)} \hspace{-3pt} \ell(x,u,\md) + \lrisk{\lrn}{\md}{\conf} [\hat{\cost}(\fa(z,u,\mdnxt),\mdnxt)],
    \)
    where $z = (x, \lrn, \confb, \md) \in \augset$, with $\confb = (\conf, \bar{\conf})$ as before, are fixed quantities and $\mdnxt \sim \row{\transmat}{\md}$.
    We denote by $\DParg (\DR{\cost})(z)$ the corresponding set of minimizers.
    The optimal cost $\hat{V}_N$ of \eqref{eq:risk-averse-OCP} is obtained through the iteration,
    \begin{equation} \label{eq:definition-DP}
        \begin{aligned}
        \hat{\cost}_k &= \T \hat{\cost}_{k-1}, \; \hat{\cost}_0 = \DR{\Vf}, \;k \in \natseq{1}{\hor}.
        \end{aligned}
    \end{equation}
    Similarly, $\DR{\augset}_{k} \dfn \dom \hat{\cost}_{k}$ is given recursively by 
    \begin{equation*}
        \DR{\augset}_k = \left\{z \, \middle|\, \exists u \in \DR{\Ufeas}(z) : 
            (\fa(z,u,\mdnxt), \mdnxt) \in \DR{\augset}_{k-1}, \,
            \forall \mdnxt \in \W \right\}.
    \end{equation*}

Now consider the stochastic closed-loop system
\begin{equation} \label{eq:closed-loop}
    \begin{aligned} 
    y_{t+1} = \fc(z_t, \md_{t+1}) \dfn \fa(z_t, \DR{\law}_\hor(z_t), \md_{t+1}),
    \end{aligned}
\end{equation}
where $\DR{\law}_{\hor}(z_t) \in \DParg(\hat{\cost}_{N-1})(z_t)$ is an optimal control law obtained by solving the \rerevision{\revision{data-driven}{learning-based}}{} \ac{\RAOCP} of horizon $\hor$ in receding horizon.

\subsection{Constraint satisfaction and recursive feasibility}

In order to show existence of $\DR{\law}_\hor \in \DParg \DR{\cost}_{\hor-1}$ at every time step, \Cref{prop:recursive-feasibility} will
require that $\DR{\Xf}$ is a robust control invariant set. We define robust control invariance for the augmented control system under consideration as follows.

\begin{definition}[Robust control invariance] \label{def:rob-inv}
    A set $\rinv \subseteq \augset$ is \iac{RCI} set for the system \eqref{eq:augmented-dynamics} if
    for all $z \in \rinv$, $\exists u \in \DR{\Ufeas}(z)$ such that $(\fa(z,u,\mdnxt), \mdnxt)  \in \rinv, \forall \mdnxt \in \W$.
    Similarly, $\rinv$ is \iac{RPI} set for the closed-loop system \eqref{eq:closed-loop} if for all $z \in \rinv$, $(\fc(z,\mdnxt), \mdnxt) \in \rinv,\, \forall \mdnxt \in \W$.
\end{definition}
Since $\DR{\Ufeas}$ consists of conditional risk constraints, our definition of robust invariance provides a distributionally robust counterpart to the notion of \emph{stochastic} robust invariance in \cite{korda_StronglyFeasibleStochastic_2011}. This notion is less 
conservative than the following, more classical notation of robust invariance. 

\begin{definition}[Classical robust control invariance] \label{def:classical-RCI} 
    A set $\rinv_x \subseteq \Re^{\ns} \times \W$ is \ac{RCI} for 
    system \eqref{eq:system-dynamics} in the classical sense if 
    for all $x \in \rinv_x$,
    \begin{equation} \label{eq:example-xf}
        \exists u: g(x,u,\md, \mdnxt) \leq 0,\, f(x,u,\mdnxt) \in \Xf(\mdnxt) , \;\forall \mdnxt \in \W. 
    \end{equation}
\end{definition}

In fact, for any set $\rinv_{x}$ as in \Cref{def:classical-RCI}, the set $\rinv_{x} \times \lrnset \times \I \times \W$ is covered by \Cref{def:rob-inv}, as illustrated in \Cref{ex:traditional-robust}. 
On the other hand, our notion of robust control invariance is stricter than 
that of \emph{uniform control invariance} considered in \cite{sopasakis_RiskaverseModelPredictive_2019}, which only requires successor states to remain in the invariant set for modes $\mdnxt$ in the \emph{cover}
of the given mode $\md$, i.e., the set of modes $\mdnxt$ for which $\elem{\transmat}{\md}{\mdnxt}>0$. This flexibility is not available in the 
current setting, as the transition kernel is assumed
to be unknown, so the cover of a mode cannot be determined with certainty.

\begin{example}[Classical robust invariant set] \label{ex:traditional-robust}
    Suppose that the terminal constraint set $\Xf$ of the 
    nominal problem is a robust control invariant set in the 
    classical sense and define for convenience $\Xf(\md) \dfn \{x \mid (x,w) \in \Xf \}$.
    Then, if $\DR{\Xf}$ is chosen such that $\DR{\Xf}(\md) \dfn \{y \mid (y, \md) \in \DR{\Xf} \} = \Xf(\md) \times \lrnset \times \I$,  $\DR{\Xf}$ is \ac{RCI} for the augmented system \eqref{eq:augmented-dynamics} according to \Cref{def:rob-inv}. Indeed, 
    since $\AVAR_{\alpha}^{p}[g(x,u,\md, \mdnxt)] \leq \max_{\mdnxt} g(x, u, \md, \mdnxt)$ for all $\alpha \in [0,1]$ and $p \in \simplex_\nModes$, \eqref{eq:example-xf} implies that for all $z \in \DR{\Xf}$, there exists $u \in \DR{\Ufeas}(z)$, such that $\fa(z,u,\mdnxt)\in \DR{\Xf}(\mdnxt)$.
\end{example}

\begin{proposition}[Recursive feasibility] \label{prop:recursive-feasibility}
    If $\DR{\Xf}$ is \iac{RCI} set for $\eqref{eq:augmented-dynamics}$, then \eqref{eq:risk-averse-OCP} is recursively feasible. That is,     
    feasibility of \ac{\RAOCP} \eqref{eq:risk-averse-OCP} for some $z \in \augset$, implies feasibility for $z^+ = (\fc(z,\mdnxt), \mdnxt)$, for all $\mdnxt \in \W, \hor \in \N_{>0}$.
\end{proposition}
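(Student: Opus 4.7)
The plan is to use the standard \ac{MPC} proof by forward shifting of the optimal policy combined with the \ac{RCI} property of $\DR{\Xf}$. Since the disturbance has finite support, we can reason about every realization on the scenario tree simultaneously, so the ``almost surely'' qualifier simply becomes ``for every $\mdnxt \in \W$''.

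Concretely, let $z \in \DR{\augset}_{\hor}$ be such that \eqref{eq:risk-averse-OCP} is feasible. By \Cref{def:raocp} there exists an optimal policy $\DR{\pol}^{\star} = (\DR{\pol}_{k}^{\star})_{k=0}^{\hor-1} \in \DR{\Pi}_{\hor}(z)$ satisfying \eqref{eq:risk-averse-constraints}. Apply the first law $u_0 = \DR{\pol}_{0}^{\star}(z) = \DR{\law}_{\hor}(z)$ and let $\mdnxt \in \W$ be arbitrary; set $z^{+} = (\fa(z,u_0,\mdnxt),\mdnxt)$. I want to construct a feasible candidate policy $\bar{\pol} = (\bar{\pol}_{k})_{k=0}^{\hor-1}$ for \eqref{eq:risk-averse-OCP} starting from $z^{+}$.

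For $k \in \natseq{0}{\hor-2}$, I would define $\bar{\pol}_{k} \dfn \DR{\pol}_{k+1}^{\star}$ restricted to the subtree rooted at $z^{+}$. Since every augmented state $z_{k}$ appearing in this shifted subtree also appears in the original subtree rooted at $z$ one stage later, the input constraint $\DR{\pol}_{k+1}^{\star}(z_{k}) \in \DR{\Ufeas}(z_k)$ from feasibility of $\DR{\pol}^{\star}$ immediately gives $\bar{\pol}_{k}(z_{k}) \in \DR{\Ufeas}(z_k)$. In particular, after $\hor-1$ shifted stages starting from $z^{+}$, we land at augmented states $z_{\hor-1}$ that coincide with the terminal states $z_{\hor}$ of the original trajectory, and by the terminal constraint implicit in $\DR{\Vf}$ (cf. \eqref{eq:cost-function-risk}) these lie in $\DR{\Xf}$.

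It remains to supply the last control $\bar{\pol}_{\hor-1}$. Here the \ac{RCI} hypothesis enters: for each $z_{\hor-1} \in \DR{\Xf}$, \Cref{def:rob-inv} yields some $u \in \DR{\Ufeas}(z_{\hor-1})$ such that $(\fa(z_{\hor-1},u,\mdnxt'),\mdnxt') \in \DR{\Xf}$ for every $\mdnxt' \in \W$. Select such a $u$ (invoking an arbitrary selection on the finite tree, which requires no measurability considerations since $\W$ is finite) to define $\bar{\pol}_{\hor-1}(z_{\hor-1})$. This simultaneously verifies the input constraint at the last stage and the terminal constraint $z_{\hor} \in \DR{\Xf}$, so $\bar{\pol}$ satisfies all the constraints in \eqref{eq:risk-averse-constraints}, proving $z^{+} \in \DR{\augset}_{\hor}$.

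The only mildly delicate point I anticipate is handling the distinction between the shifted tree (whose root is $z^{+}$ and whose depth is $\hor-1$) and the required depth-$\hor$ tree for the new problem; this is resolved cleanly by the \ac{RCI} step, which supplies exactly the missing terminal stage. Because all quantifiers over $\mdnxt$ range over the finite set $\W$, the construction works for every realization, so the argument is agnostic to the particular ambiguity sets used — only the containment $\row{\transmat}{\md} \in \simplex_{\nModes}$ and the structural \ac{RCI} assumption on $\DR{\Xf}$ are used. Iterating the argument in $t$ yields feasibility at every time step of the closed-loop evolution \eqref{eq:closed-loop}.
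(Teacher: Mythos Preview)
Your proof is correct and follows the classical ``shift-and-append'' construction: take the tail of the optimal policy from $z$, then use the \ac{RCI} property of $\DR{\Xf}$ to furnish the final stage. This is a valid and standard route.

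The paper takes a somewhat different approach. Rather than constructing a candidate policy for $z^{+}$ directly, it proves by induction on $\hor$ that the entire feasible domain $\DR{\augset}_{\hor}$ is itself \ac{RCI} for the augmented dynamics \eqref{eq:augmented-dynamics}. The base case is $\DR{\augset}_{0}=\DR{\Xf}$; the induction step argues that if $z\in\DR{\augset}_{\hor+1}$ and $u\in\DR{\Ufeas}(z)$ drives all successors into $\DR{\augset}_{\hor}$, then \ac{RCI} of $\DR{\augset}_{\hor}$ ensures each such successor already lies in $\DR{\augset}_{\hor+1}$. Recursive feasibility then follows because the \ac{MPC} law $\DR{\law}_{\hor}(z)\in\DParg(\DR{\cost}_{\hor-1})(z)$ necessarily selects a control landing in $\DR{\augset}_{\hor-1}$ (any other choice yields infinite cost). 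Your argument buys a concrete feasible policy and is perhaps more transparent; the paper's argument buys the slightly stronger structural statement that $\DR{\augset}_{\hor}$ is \ac{RCI}, which can be reused elsewhere (e.g., in establishing \ac{RPI} of $\dom\DR{\cost}_{\hor}$ for the stability proof). Both are equally valid here.
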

\newcommand\recfeasproof{
    \begin{proof}
        The proof follows from a straightforward inductive argument on 
        the prediction horizon $\hor$. 
        We first show that if $\DR{\Xf}$ is \ac{RCI}, then so is $\DR{\augset}_\hor$.
        This is done by induction on the horizon $\hor$ of the \ac{OCP}. 
        
        \textbf{Base case ($\hor = 0$).} Trivial, since $\DR{\augset}_0 = \DR{\Xf}$.
        
        \textbf{Induction step ($\hor \Rightarrow \hor +1$).} Suppose that for some $\hor \in \N$, $\DR{\augset}_{\hor}$ is \ac{RCI} for \eqref{eq:augmented-dynamics}. Then, by definition of $\DR{\augset}_{\hor+1}$, there exists for each $z \in \DR{\augset}_{\hor+1}$, a nonempty set $\DR{\Ufeas}_{\hor}^{\sstar}(z) \subseteq \DR{\Ufeas}(z)$ such that for every $u \in \DR{\Ufeas}_{\hor}^{{\sstar}}(z)$ and for all $\mdnxt\in\W$, it holds that $z^+ \in \DR{\augset}_{\hor}$, where $z^+ = \fa(z,u,\mdnxt)$. Furthermore, the induction hypothesis ($\DR{\augset}_{\hor}$ is RCI), implies that there also exists a $u^+ \in \DR{\Ufeas}(z^+)$ such that $\fa(z^+, u^+, \mdnxt^+) \in \DR{\augset}_{\hor}(\mdnxt^+), \forall \mdnxt^+ \in \W$. Therefore, $z^+$ satisfies the conditions defining $\DR{\augset}_{\hor+1}$. In other words, $\DR{\augset}_{\hor+1}$ is RCI. 
        
        The claim follows from the fact that for any $\hor > 0$ and $z \in \DR{\augset}_{\hor}$, $u = \DR{\law}_{\hor}(z) \in \DParg(\DR{\cost}_{\hor-1})(z) \subseteq \DR{\Ufeas}^{{\sstar}}_{\hor-1}(z)$, as any other choice of $u$ would yield infinite cost in the definition of the Bellman operator.  
    \end{proof}
}
\ifArxiv
    \recfeasproof
\else
    \ifJournal
        \recfeasproof    
    \else
        \Cref{prop:recursive-feasibility} follows from a standard inductive 
        argument. We refer to \cite{schuurmans_LearningBasedDistributionallyRobust_2020a} for the detailed proof.
    \fi
\fi

    \begin{cor}[Chance constraint satisfaction] \label{cor:constraint-satisfaction}
        If the conditions for \Cref{prop:recursive-feasibility} hold, then by \Cref{prop:conditions-confidences}, the stochastic process $(z_t)_{t \in \N} = (x_t, \lrn_t, \confb_t, \md_t)_{t\in\N}$ satisfying dynamics \eqref{eq:closed-loop} satisfies the nominal chance constraints 
        \[ \prob[g(x_t, \DR{\law}_{\hor}(z_t), \md_{t+1})>0 \mid x_t, \md_t] < \alpha, \]
        \as, for all $t \in \N$.
    \end{cor}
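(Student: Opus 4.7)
The plan is to chain two results already established: Proposition~\ref{prop:recursive-feasibility} (recursive feasibility) and Proposition~\ref{prop:conditions-confidences} (the distributionally robust-to-chance-constraint implication).

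First, I would use Proposition~\ref{prop:recursive-feasibility} to argue that, under the assumed \ac{RCI} property of $\DR{\Xf}$, feasibility of the \ac{\RAOCP}~\eqref{eq:risk-averse-OCP} propagates along every closed-loop trajectory $(z_t)_{t\in\N}$ generated by~\eqref{eq:closed-loop}. Consequently, the \ac{MPC} law $\DR{\law}_\hor(z_t) = \DR{\pol}_0^\star(z_t)$ in~\eqref{eq:MPC-law-DR} is well-defined almost surely for all $t\in\N$, and by constraint~\eqref{eq:risk-constraints-OCP} in \Cref{def:raocp} it satisfies $\DR{\law}_\hor(z_t) \in \DR{\Ufeas}(z_t)$, \as. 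Unpacking the definition~\eqref{eq:DR-constraints} of $\DR{\Ufeas}$ with $\confb_t = (\conf_t,\bar\conf_t)$, this is exactly the pointwise risk inequality
\[
\lriskc{\lrn_t}{\md_t}{\bar\conf_t}{\DR{\alpha}}\big[g(x_t,\DR{\law}_\hor(z_t),\md_t,\md_{t+1})\big] \leq 0, \;\text{a.s.}
\]

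Second, I would invoke Proposition~\ref{prop:conditions-confidences} at every $t\in\N$, identifying its confidence parameter $\conf$ with $\bar\conf_t$ and its learner state with the random $\lrn_t$ produced by the learning dynamics. By the remark following~\eqref{eq:DR-constraints}, the surrogate violation level $\DR{\alpha}$ is chosen precisely as $\DR{\alpha} = (\alpha-\bar\conf_t)/(1-\bar\conf_t)$ (requiring $\bar\conf_t<\alpha$), so the hypothesis of Proposition~\ref{prop:conditions-confidences} is met. Its implication~\eqref{eq:implication-prop3-1} then yields $\prob[g(x_t,\DR{\law}_\hor(z_t),\md_t,\md_{t+1})\leq 0\mid x_t,\md_t]\geq 1-\alpha$, which rearranges to the claimed nominal chance constraint $\prob[g(\cdots)>0\mid x_t,\md_t]\leq\alpha$.

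The only mild subtlety is ensuring that Proposition~\ref{prop:conditions-confidences} is applicable to the closed-loop learner state, which is itself random. This is already accounted for in the statement of that proposition, which is phrased in terms of an \emph{a priori} unknown random $\lrn$ satisfying the coverage property~\eqref{eq:high-confidence}; by \Cref{assum:learner}, this property holds at every $t$ along any sample path, so no additional argument beyond the two sentences above is needed. I therefore expect the proof to be essentially a two-line concatenation rather than a substantive argument.
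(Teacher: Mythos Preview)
Your proposal is correct and matches the paper's approach exactly: the corollary is stated without a separate proof because it is an immediate concatenation of \Cref{prop:recursive-feasibility} (yielding $\DR{\law}_\hor(z_t)\in\DR{\Ufeas}(z_t)$ a.s.) and \Cref{prop:conditions-confidences} (turning the \ac{DR} risk constraint into the nominal chance constraint), precisely as you outline. The only cosmetic point is that the implication \eqref{eq:implication-prop3-1} yields $\leq \alpha$ rather than the strict inequality written in the corollary, which you already handled correctly.
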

    We conclude this section by emphasizing that although 
    the MPC scheme guarantees closed-loop constraint satisfaction, it does so while being less conservative than a fully robust approach,
    which is recovered by taking 
    $\amb_{\conf}(s,\md) = \simplex_{\nModes}$ for all $(s,\md,\conf) \in \lrnset \times \W \times [0,1]$. It is apparent from \Cref{eq:ambiguous-chance-constraint,eq:DR-constraints}, that for all other choices of the ambiguity set, the set of feasible control actions will be larger (in the sense of set inclusion).

\subsection{Stability} \label{sec:stability}
In this section, we will provide sufficient conditions on the control setup under which the origin is \ac{MSS} for \eqref{eq:closed-loop}, i.e., $\lim_{t \to \infty} \E[\nrm{x_{t}}^2] = 0$ for all $x_0$ in some specified compact set containing the origin.

Our main stability result, stated in \Cref{thm:MPC-stability},
hinges in large on the following \namecref{lem:DR-MSS}, which relates \emph{risk-square stability} {\cite[{\revision{Lem. 5}{Thm. 6}}]{sopasakis_RiskaverseModelPredictive_2019}} of the origin for the autonomous system \eqref{eq:closed-loop} (with respect to a statistically determined ambiguity set) to stability in the mean-square sense (with respect to the true distribution).

\begin{lem}[Distributionally robust \ac{MSS} condition] \label{lem:DR-MSS}
    Suppose that \Cref{assum:confidences} holds and that there exists a nonnegative, proper function $\cost: \augset \rightarrow \barre_{+}$, such that 
    \begin{conditions*}
        \item \label{cond:RPI} $\dom \cost$ is \revision{compact}{} \ac{RPI} for \eqref{eq:closed-loop} 
        \revision{}{and $\dom \cost(\argdot, \lrn, \confb, \md)$ is compact and contains the origin for all $(\lrn, \confb, \md): \dom \cost(\argdot, \lrn, \confb, \md) \neq \emptyset$}
        \item \label{cond:lyap-decrease} $\lrisk{\lrn}{\md}{\conf}[\cost(\fc(z,\mdnxt), \mdnxt)]-\cost(z) \leq - c \nrm{x}^2$, for some $c>0$,
        for all $z \in \dom \cost$;
        \item \label{cond:boundedness} $\cost$ is uniformly bounded on its domain.
    \end{conditions*}
    Then, $\lim_{t \to \infty}\E[ \nrm{x_t}^2] = 0$ 
    for all $z_0 \in \dom \cost$, where $(z_t)_{t\in\N} = (x_t, \lrn_t, \confb_t, \md_t)_{t\in\N}$ is the stochastic process governed by dynamics \eqref{eq:closed-loop}.   
\end{lem}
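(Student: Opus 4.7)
The plan is to use $V$ as a (stochastic) Lyapunov function, but since the decrease condition \ref{cond:lyap-decrease} involves the data-driven risk $\lrisk{\lrn}{\md}{\conf}$ rather than the true conditional expectation, we must carefully separate two regimes: the ``good'' event where the true transition row lies inside the current ambiguity set, and its complement, whose probability decays summably by \Cref{assum:confidences}.

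Concretely, for each $t\in\N$ define the event
$E_t \dfn \{ \row{\transmat}{\md_t} \in \amb_{\conf_t}(\lrn_t, \md_t) \}$,
and note that by \eqref{eq:high-confidence} we have $\prob[E_t^c]\le \conf_t$. On $E_t$, since the true row lies in the ambiguity set, definition \eqref{eq:dr-expectation} gives
\[
  \E[V(z_{t+1}) \mid z_t] \;\leq\; \lrisk{\lrn_t}{\md_t}{\conf_t}\!\big[V(\fc(z_t,\mdnxt),\mdnxt)\big]
  \;\leq\; V(z_t) - c\nrm{x_t}^2,
\]
where the second inequality is condition \ref{cond:lyap-decrease}. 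On $E_t^c$ we cannot make this comparison, but by \ref{cond:boundedness} both $V(z_t)$ and $\E[V(z_{t+1})\mid z_t]$ are bounded by some constant $M$; by \ref{cond:RPI} the trajectory stays in $\dom V$ and so $\nrm{x_t}^2$ is bounded by some constant $C$. Taking total expectations and splitting over $E_t\cup E_t^c$ yields
\[
  \E[V(z_{t+1})] - \E[V(z_t)] \;\leq\; -c\,\E\bigl[\nrm{x_t}^2\1_{E_t}\bigr] + M \conf_t .
\]

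Telescoping this inequality from $t=0$ to $T-1$ and using $V\geq0$ together with the summability condition \ref{cond:conf-summable} of \Cref{assum:confidences} gives
\[
  c\sum_{t=0}^{T-1} \E\bigl[\nrm{x_t}^2\1_{E_t}\bigr] \;\leq\; V(z_0) + M\sum_{t=0}^{\infty} \conf_t \;<\;\infty,
\]
uniformly in $T$. Hence $\sum_{t=0}^{\infty}\E[\nrm{x_t}^2\1_{E_t}]<\infty$, which forces $\E[\nrm{x_t}^2\1_{E_t}]\to 0$. Finally,
\[
  \E[\nrm{x_t}^2] \;=\; \E[\nrm{x_t}^2\1_{E_t}] + \E[\nrm{x_t}^2\1_{E_t^c}] \;\leq\; \E[\nrm{x_t}^2\1_{E_t}] + C\conf_t,
\]
and both terms vanish as $t\to\infty$, proving the claim.

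The main technical obstacle will be to make the conditional argument around $E_t$ rigorous, because $E_t$ itself depends on the random learner state $\lrn_t$ and on $\md_t$, so the inequality $\E[V(z_{t+1})\mid z_t]\leq \lrisk{\lrn_t}{\md_t}{\conf_t}[V(z_{t+1})]\1_{E_t}$ must be established as an almost-sure bound on the appropriate filtration before taking total expectations. Compactness of $\dom V(\argdot,\lrn,\confb,\md)$ (the refined form of \ref{cond:RPI}) is what guarantees the uniform bound $C$ on $\nrm{x_t}^2$ needed to absorb the contribution from $E_t^c$; without it, the residual $\E[\nrm{x_t}^2\1_{E_t^c}]$ could not be controlled by $\conf_t$ alone.
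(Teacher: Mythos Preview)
Your proposal is correct and follows essentially the same approach as the paper: define the coverage event $E_t=\{\row{\transmat}{\md_t}\in\amb_{\conf_t}(\lrn_t,\md_t)\}$, use \eqref{eq:risk-upper-bound} together with \ref{cond:lyap-decrease} on $E_t$ and the uniform bound from \ref{cond:boundedness} on $E_t^c$, telescope, and invoke summability of $(\conf_t)$ and compactness from \ref{cond:RPI}. The paper organizes the algebra slightly differently (it carries $\E[\|x_t\|^2]$ through the telescoping sum and cancels against the $E_t$-part at the end, obtaining a bound on the full $\sum_t \E[\|x_t\|^2]$ directly), whereas you bound $\sum_t\E[\|x_t\|^2\1_{E_t}]$ first and then control $\E[\|x_t\|^2\1_{E_t^c}]\le C\conf_t$ separately; the two arrangements are equivalent. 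One remark: the ``technical obstacle'' you flag is not actually an obstacle, since $E_t$ depends only on $(\lrn_t,\md_t)$, hence $\1_{E_t}$ is $\sigma(z_t)$-measurable and can be pulled through the conditional expectation $\E[\,\cdot\mid z_t]$ without difficulty.
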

\begin{proof}     
    See \Cref{proof:lem:DR-MSS}.
\end{proof}

\begin{thm}[\acs{MPC} stability] \label{thm:MPC-stability}
    Suppose that \Cref{assum:confidences,assum:regularity} are satisfied and the following statements hold.
    \begin{conditions*}
        \item \label{cond:TVfleqVf} $\T \DR{\Vf} \leq \DR{\Vf}$;
        \item \label{cond:stage-cost-bound}
        $c \nrm{x}^2 \leq \ell(x,u,\md)$
              for some $c > 0$,
              for all
              $z = (x,\lrn,\confb,\md) \in \dom \DR{\cost}_\hor$ and all 
              $u \in \DR{\Ufeas}(z)$;
        \item \label{cond:locally-bounded} $\DR{\cost}_{\hor}$ is locally bounded on its domain.
    \end{conditions*}
    Then, the origin is \ac{MSS} for the \ac{MPC}-controlled system \eqref{eq:closed-loop}, over all \revision{compact}{} $\ac{RPI}$ sets $\bar{\augset} \subseteq \dom \DR{\cost}_{\hor}$ \revision{}{
        such that for all $(\lrn, \confb, \md): (x,\lrn,\confb,\md) \in \bar{\augset}$,
        the projection
        $\{x \mid (x, \lrn,\confb, \md) \in \bar{\augset} \}$ 
        is compact and contains the origin.
    }
\end{thm}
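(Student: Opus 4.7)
The plan is to verify the three hypotheses of \Cref{lem:DR-MSS} for the candidate Lyapunov function $V = \DR{\cost}_\hor + \delta_{\bar\augset}$, where $\bar\augset\subseteq \dom \DR{\cost}_\hor$ is an arbitrary RPI set as in the statement. \Cref{assum:confidences} is inherited directly, so the three conditions \ref{cond:RPI}--\ref{cond:boundedness} of the lemma remain.

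First, condition \ref{cond:RPI} is immediate: by construction, $\dom V = \bar\augset$ is RPI for the closed loop, and by hypothesis its $x$-projections are compact and contain the origin.

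Second, for the risk-decrease condition \ref{cond:lyap-decrease}, the main step is to establish the monotonicity $\T \DR{\cost}_{k} \leq \DR{\cost}_{k}$ for every $k \in \natseq{0}{\hor}$. Because $\lrisk{\lrn}{\md}{\conf}$ is a coherent risk measure (it is a support function of a convex subset of $\simplex_\nModes$), it is monotone in its argument; hence so is the Bellman operator $\T$. Starting from \ref{cond:TVfleqVf}, i.e. $\T \DR{\cost}_0 = \T \DR{\Vf} \leq \DR{\Vf} = \DR{\cost}_0$, a straightforward induction using monotonicity yields $\DR{\cost}_{k+1} = \T \DR{\cost}_{k} \leq \T \DR{\cost}_{k-1}= \DR{\cost}_k$, and consequently $\T \DR{\cost}_\hor \leq \DR{\cost}_\hor$. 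Applying this to an optimizer $\DR{\law}_\hor(z) \in \DParg(\DR{\cost}_{\hor-1})(z)$ and using $\DR{\cost}_{\hor} \leq \DR{\cost}_{\hor-1}$ together with monotonicity of $\lrisk{\lrn}{\md}{\conf}$ gives
\[
  \lrisk{\lrn}{\md}{\conf}\big[\DR{\cost}_\hor(\fc(z,\mdnxt),\mdnxt)\big]
  \leq \DR{\cost}_\hor(z) - \ell(x,\DR{\law}_\hor(z),\md)
  \leq \DR{\cost}_\hor(z) - c\nrm{x}^2,
\]
where the last inequality invokes \ref{cond:stage-cost-bound}. Since $\bar\augset$ is RPI, adding the indicator $\delta_{\bar\augset}$ does not alter this bound on $\bar\augset$, so condition \ref{cond:lyap-decrease} holds for $V$.

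Third, for the uniform boundedness \ref{cond:boundedness}, I would combine \ref{cond:locally-bounded} with the compactness of $\bar\augset$. The augmented ambient space $\Re^{\ns}\times\lrnset\times\I\times\W$ has compact components $\lrnset$ (\Cref{assum:learner}), $\I=[0,1]^{\nbeta}$, and $\W$ finite; the compactness of the $x$-projections together with the hypothesis that $\bar\augset\subseteq\dom\DR{\cost}_\hor$ then makes $\bar\augset$ a compact subset of $\dom\DR{\cost}_\hor$. Since $\DR{\cost}_\hor$ is locally bounded on its domain by \ref{cond:locally-bounded}, it attains a finite supremum on the compact set $\bar\augset$, yielding uniform boundedness of $V$ on $\dom V$. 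Invoking \Cref{lem:DR-MSS} concludes that $\lim_{t\to\infty}\E[\nrm{x_t}^2]=0$ for every $z_0\in\bar\augset$, which is precisely mean-square stability of the origin.

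The main obstacle I anticipate is the monotonicity argument for $\T$: one must be careful that $\DR{\cost}_k$ may take the value $+\infty$ outside $\DR{\augset}_k$, so the induction step implicitly uses that the feasible sets $\DR{\augset}_k$ are nested (which follows from $\dom \DR{\cost}_{k+1}\subseteq \dom \DR{\cost}_k$ once $\T\DR{\cost}_k\leq\DR{\cost}_k$ is in hand, and in turn from $\dom\DR{\Vf}\subseteq\dom\T\DR{\Vf}$ implicit in \ref{cond:TVfleqVf}). The other steps are standard MPC Lyapunov arguments adapted to the ambiguous setting via coherence of $\lrisk{\lrn}{\md}{\conf}$.
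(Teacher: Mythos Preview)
Your proposal is correct and follows essentially the same route as the paper: define $V=\DR{\cost}_\hor+\delta_{\bar\augset}$, establish $\T\DR{\cost}_k\le\DR{\cost}_k$ for all $k$ by induction from \ref{cond:TVfleqVf} via monotonicity of coherent risk measures, derive the risk-decrease bound from the Bellman identity and \ref{cond:stage-cost-bound}, and conclude by invoking \Cref{lem:DR-MSS}. The only point where you go beyond the paper is in spelling out why \ref{cond:locally-bounded} yields uniform boundedness on $\bar\augset$; note, however, that compactness of the $x$-sections together with compactness of $\lrnset\times\I\times\W$ does not by itself force $\bar\augset$ to be compact without also assuming $\bar\augset$ closed (the paper, too, glosses over this).
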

\begin{proof}
    The proof is along the lines of that of \cite[thm. 6]{sopasakis_RiskaverseModelPredictive_2019} and shows that $\hat{\cost}_\hor$ satisfies the conditions of \Cref{lem:DR-MSS}. Details are in the \Cref{proof:thm:MPC-stability}.
\end{proof}

\ifJournal
The results in this section indicate that after an appropriate choice 
of the learning system, the thusly defined risk measures 
can be used to design \iac{MPC} controller using existing techniques (e.g., those presented in \cite{sopasakis_RiskaverseModelPredictive_2019}).
Corresponding stability guarantees (assuming known transition probabilities) then translate directly into stability guarantees under an ambiguously estimated transition kernel.

\subsection{Out-of-sample bounds and consistency} \label{sec:consistency}
\rerevision{}{
    We now turn our attention to analyzing the value function of the 
    \ac{\RAOCP} in relation to the nominal (stochastic) \ac{OCP}. 
    We will show that under quite general assumptions, the former 
    provides an upper bound to the latter with high probability (\Cref{thm:out-of-sample}). 
    Furthermore, 
}
under appropriate constraint qualifications, we will show that the
optimal value of the \ac{\RAOCP} converges to that of the nominal problem as the sample size increases, see \Cref{thm:consistency}.
In the particular case where the constraints do not depend on the 
distribution, we can relax the constraint qualification to obtain 
a similar result. We include this as a separate statement, as it 
permits a more direct and illustrative proof using dynamic programming.

Given an arbitrary state-mode pair $(x,w)$, initial
value of the learning state $\lrn_0$ and confidence $\confb_0$, 
the stochastic process defined by the optimal value of the \ac{\RAOCP} \eqref{eq:risk-averse-OCP},
i.e., $\Vht{t}(x,\md) \dfn \DR{V}_{\hor}(x, \lrn_t, \confb_t, \md)$,
$t \in \N$
serves as a sequential approximation of the optimal value $\cost_{\hor}(x,\md)$ of the horizon-$\hor$ nominal \ac{OCP} \eqref{eq:stochastic-OCP}.
This section will establish sufficient conditions under which \rerevision{}{$\Vht{t}$ bounds $\cost_\hor$ from above, and}
for which it converges to $\Vtrue$ almost surely---a property which we refer to as \textit{asymptotic consistency}.
\rerevision{}{
The former guarantee will provide a performance certificate in the sense that
the true optimal cost
(under full knowledge of the distribution) will be no worse than the cost predicted 
by solving its \ac{DR} counterpart.
Of course, this guarantee is also provided by a robust (minimax) scheme 
(obtained by taking $\amb_{\conf_t} \equiv \simplex_{\nModes}, \forall t$).
However, such an approach is non-adaptive and therefore lacks \textit{consistency}. 
On the other hand, a sample-average approximation 
(in which the ambiguity set is replaced by a singleton containing only the empirical distribution) 
may under similar conditions be consistent, but it provides no safety guarantees/performance bounds. 
}
\rerevision{
To this end, we make the following assumption on the learner state and the corresponding ambiguity set. 
\begin{assumption}[Ambiguity decrease] 
    There exists a sequence $\{\ambdia_t\}_{t \in \N}$ with $\lim_{t \to \infty} \ambdia_t = 0$, such that
    \[
        \sup_{p, q \in \amb_{\idx{\confb_{t}}{i}}(s_t, \md)} \nrm{p-q}_1 \leq \ambdia_{t}\quad \as, \qquad \forall \md \in \W,\,\forall i \in \natseq{1}{\nbeta}, 
    \]
\end{assumption}
\Cref{assum:amb-decrease} states that the ambiguity sets ``shrink''
to a singleton with probability one. Since 
the ambiguity is expected to decrease as more information is observed,
this is a rather natural assumption, which is satisfied by most classes 
of ambiguity sets, such as the ones discussed in \Cref{sec:learning} (cf. \Cref{rmrk:radius-state}).
}{}
\revision{
\begin{example}[Divergence-based ambiguity sets] \label{ex:ambiguity-decrease-confidence}
    Consider again the divergence-based ambiguity sets introduced in 
    \Cref{sec:learning}.
    \Cref{thm:concentrations} provides an expression for the radius $r_t(\md)$ of two commonly used ambiguity sets, which asymptotically behave as 
    $
        r_t(\md) \sim -t_{\md}^{-1} \log(\confb_t).
    $
    Recall that $t_{\md}$ denotes the number of visits to mode $\md$ at 
    time $t$. In this case, the requirement of \Cref{assum:amb-decrease} results in a lower bound on the rate at which $\confb_t$ may decrease with $t$, posing a trade-off 
    with \Cref{assum:confidences}, which requires summability of the sequence $(\confb_t)_{t \in \N}$. 
    Given ergodicity of the Markov chain (\Cref{assum:ergodicity}),
    it is straightforward to verify --
    using the Borel-Cantelli 
    lemma \cite[Thm. 4.3]{billingsley_ProbabilityMeasure_1995} in conjunction with \cite[Lem. 6]{wolfer_MinimaxLearningErgodic_2019} -- that with 
    probability 1,
    there exists a finite time $T$, such that for all $t > T$ and for all $\md \in \W$, it holds that $t_{\md} \geq c t$,
    where $c > 0$ is a constant depending on specific properties of the Markov chain. Hence, so long as $(\confb_t)_{t\in\N}$ is chosen to satisfy 
    $\lim_{t\to \infty} \tfrac{-\log \confb_t}{t} = 0$ element-wise,
    then \Cref{assum:amb-decrease} is satisfied. Note that the choice in \Cref{ex:confidence-dynamics} satisfies both this requirement and that of \Cref{assum:confidences}.
\end{example}
}{}
\rerevision{
    We are now ready to prove consistency of the \ac{\RAOCP} in the absence of 
chance constraints. Below, we denote $\Xf(\md) = \{x \mid (x,\md) \in \Xf\}$ and similarly $\DR{\Xf}(\md) = \{y \mid (y, \md) \in \DR{\Xf} \}$. 
}{}

\rerevision{}{
}
Below, we denote $\Xf(\md) = \{x \mid (x,\md) \in \Xf\}$ and similarly $\DR{\Xf}(\md) = \{y \mid (y, \md) \in \DR{\Xf} \}$.
\rerevision{}{
We will also pose the following assumptions in the remainder of the section.
\begin{assumption} \label{asm:assumption-consistency}
    \begin{conditions}
        \item The risk levels $\DR{\alpha}_t$ are chosen according to the upper bound of \Cref{prop:conditions-confidences}, i.e.,
        $\DR{\alpha}_t = \frac{\alpha - \bar \conf_t}{1 - \bar{\conf}_t}$ and $\bar \conf_t < \alpha \leq 1$.
        \item $\hat\Xf$ is constructed in relation to the original 
        problem such that for all $\md \in \W$, $\hat \Xf(\md) = \Xf(\md) \times \lrnset \times \I$,
        and $\Xf$ is \ac{RCI} for system \eqref{eq:system-dynamics} in the sense of \Cref{def:classical-RCI}.
    \end{conditions}
\end{assumption}

\begin{thm}[Performance guarantee] \label{thm:out-of-sample}
    Suppose that \cref{asm:assumption-consistency} holds. 
    Then, for 
    any initial learner state $s_0 = s \in \lrnset$ and 
    any initial confidence level $\confb_0 = \confb \in \I$, 
    \begin{statements}
    \item \label{state:general}
    the value function of the \ac{\RAOCP} of horizon $\hor \geq 0$ 
    asymptotically upper bounds the true value function. That is, 
    \begin{equation} 
        \prob[\Vht{t}(x,\md) \geq \cost_{\hor}(x,\md), \forall (x, \md) \in \dom \cost_\hor] \geq 1 - \probt,
        \label{eq:upper-approximation-hp}
    \end{equation}
    for all $ t \in \N$, with $\probt=\nModes \sum_{k=t}^{t+\hor} \|\confb_k\|_1$. 
    \item \label{state:concentric}
    If, furthermore, $\amb_{\conf}$ is
    selected such that 
    \begin{equation}\label{eq:concentric-ambiguity}
        \conf' \leq \conf \implies \amb_{\conf}(\lrn, \md) \subseteq \amb_{\conf'}(\lrn, \md), \quad \forall (\lrn, \md) \in \lrnset \times \W, 
    \end{equation}
    then, 
    \eqref{eq:upper-approximation-hp} holds with 
    \(
        \probt=\nModes \sum_{k=t}^{t+\hor} \nrm{\confb_k}_\infty.
    \)
    \end{statements}
\end{thm}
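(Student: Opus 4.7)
The plan is to produce both statements simultaneously by exhibiting a high-probability event $E_t$ on which the pointwise inequality $\Vht{t}(x,\md) \geq \cost_\hor(x,\md)$ holds deterministically for every $(x,\md) \in \dom \cost_\hor$, and then controlling $\prob[E_t^c]$ with a union bound whose resolution differs between parts (a) and (b).

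For each $k \in \{t,\ldots,t+\hor\}$, $\md \in \W$ and $i \in \{1,\ldots,\nbeta\}$, let $B_{k,\md,i} \dfn \{\row{\transmat}{\md} \notin \amb_{\idx{\conf_k}{i}}(\lrn_k,\md)\}$; \Cref{assum:learner} gives $\prob[B_{k,\md,i}] \leq \idx{\conf_k}{i}$. Setting $E_t \dfn \bigcap_{k,\md,i} B_{k,\md,i}^c$ and applying the union bound directly yields $\prob[E_t^c] \leq \nModes \sum_{k=t}^{t+\hor} \|\confb_k\|_1$, proving the probability bound of part (a). For part (b), the nesting property \eqref{eq:concentric-ambiguity} implies that membership of $\row{\transmat}{\md}$ in the smallest of the $\nbeta$ ambiguity sets at a given $(k,\md)$ --- namely the one associated with the largest confidence level in $\confb_k$ --- is already sufficient for membership in all of them. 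The union bound over the index $i$ therefore collapses to a single term with failure probability $\|\confb_k\|_\infty$, giving the sharper $\probt = \nModes \sum_{k=t}^{t+\hor} \|\confb_k\|_\infty$.

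On $E_t$, I prove by induction on $k = 0, 1, \ldots, \hor$ that $\T^k \DR\Vf(x, \lrn, \confb, \md) \geq \cost_k(x, \md)$ for every augmented state with $(x, \md) \in \dom \cost_k$, where $\cost_k$ denotes the $k$-step nominal cost-to-go. The base case is immediate from \Cref{asm:assumption-consistency}, whose second condition forces $\DR\Vf(x, \lrn, \confb, \md) = \bar\Vf(x, \md) = \cost_0(x, \md)$. For the inductive step, two facts --- both valid on $E_t$ --- combine. First, since $\row{\transmat}{\md} \in \amb_{\bar\conf}(\lrn, \md)$, any $u \in \DR\Ufeas(z)$ satisfies $\AVAR_{\DR\alpha}^{\row{\transmat}{\md}}[g(x,u,\md,\mdnxt) \mid x, \md] \leq 0$; monotonicity of $\AVAR_\beta$ in $\beta$, together with $\DR\alpha \leq \alpha$ (from the first condition of \Cref{asm:assumption-consistency}), then yields $\DR\Ufeas(z) \subseteq \Ufeas(x, \md)$. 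Second, since $\row{\transmat}{\md} \in \amb_\conf(\lrn, \md)$, the ambiguous conditional expectation dominates the true one, $\lrisk{\lrn}{\md}{\conf}[\,\cdot\,] \geq \E_{\row{\transmat}{\md}}[\,\cdot \mid \md]$. Chaining the second fact with the inductive hypothesis and minimizing the resulting upper bound over the smaller DR feasible set given by the first fact produces $\T^k \DR\Vf(z) \geq \cost_k(x, \md)$. Setting $k = \hor$ and evaluating at the realized $(\lrn_t, \confb_t)$ concludes.

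The main technical subtlety is that the DR value function involves hypothetical learner states produced by the recursive updates $\learner(\lrn, \confb, \md, \mdnxt)$ along every branch of the prediction scenario tree, whereas \eqref{eq:high-confidence} is phrased in terms of the realized learner state at a single real time. Because $\learner$ is Markovian and each scenario branch corresponds to a valid continuation of the underlying Markov chain, each coverage event appearing in the inductive step is of the form controlled by \eqref{eq:high-confidence}, and the union bound above aggregates these events across branches uniformly. Making this branch-uniform reading precise is the one point where care is required; once $E_t$ has been fixed, the remainder of the argument is routine.
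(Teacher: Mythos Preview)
Your proposal is essentially the paper's argument, differing only in organization: the paper runs a probabilistic induction on $\hor$ (reducing the horizon-$\hor$ claim at time $t$ to the horizon-$(\hor{-}1)$ claim at time $t{+}1$ via the $Q$-function identity $\Vht{t}=\inf_u Q^{(t)}_\hor$ and then union-bounding $\prob[\lnot\evtCov]+\prob[\lnot\evtCost[t+1][\hor-1]]$), whereas you fix one composite event $E_t$ upfront and then induct deterministically on the stage. Your probability computations for both \ref{state:general} and \ref{state:concentric} match the paper's, including the use of the nesting condition \eqref{eq:concentric-ambiguity} to collapse the union over $i$ to a single term $\|\confb_k\|_\infty$. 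The scenario-tree subtlety you flag---that the Bellman recursion propagates \emph{predicted} learner states $\learner(\lrn_t,\confb_t,\md,\mdnxt)$ along every branch while the coverage guarantee \eqref{eq:high-confidence} and your event $E_t$ concern the \emph{realized} sequence $(\lrn_k)_{k\ge t}$---is genuine, and the paper resolves it in exactly the way you sketch: by identifying the next-stage learner state inside $Q^{(t)}_\hor$ with the realized $\lrn_{t+1}$, so that the induction hypothesis at $t{+}1$ can be invoked. Your proposal is therefore at parity with the paper's proof on this point.
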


\begin{proof}
    See \Cref{proof:thm:out-of-sample}.
\end{proof}

    \Cref{thm:out-of-sample} guarantees that with high probability, 
    the \ac{DR} value function provides an upper bound for the 
    value function under full knowledge of the distribution. 
    The corresponding violation rate $\probt$ can be tuned using
    the user-specified confidence levels $\confb_t$.
    
    Note that the violation rate $\probt$ increases with the prediction horizon. 
    This is to be expected, since we essentially require the ambiguity set to contain the true switching distribution
    for all predicted time steps,
    which becomes increasingly difficult as the horizon length increases.
    However, due to the summability of the confidence levels $\confb_t$ (cf. \Cref{assum:confidences}), 
    the violation rate $\probt$ will converge to a finite value as $\hor \to \infty$.
    Similarly, as $t \to \infty$ for fixed $\hor$,
    $\probt$ converges to zero at a summable rate.
    We will use this fact in \cref{cor:asymptotic-performance-bound} 
    to obtain a stronger guarantee asymptotically.

    Before stating the asymptotic extension of \Cref{thm:out-of-sample},
    we briefly highlight the sharper bound for $\probt$ stated
    in \Cref{thm:out-of-sample}-\ref{state:concentric}.
    This result requires that for a given learner state $\lrn$,
    the size of the ambiguity set scales monotonically 
    with the required confidence level. 
    This is satisfied for the described divergence-based ambiguity sets in \Cref{tab:overview}.
    Indeed, the center of the divergence balls are given by the empirical distribution and 
    therefore independent of the confidence level $\conf$. The radii,
    by \cref{thm:concentrations}, are monotone decreasing functions of $\conf$.
    Thus, the intersection of a collection of such ambiguity sets is equal to the 
    ambiguity set with the largest value of $\beta$ (and thus, the smallest radius).

}
\rerevision{}{
    \begin{cor} \label{cor:asymptotic-performance-bound}
        Under the same conditions as \cref{thm:out-of-sample},
        we have with probability one that, 
        \begin{equation}
            \Vht{t}(x,\md) \geq \cost_{\hor}(x,\md)
            \text{ for all sufficiently large }
            t, \label{eq:upper-approximation-general}\\
        \end{equation}
        for all $(x, \md) \in \dom \cost_\hor$.
    \end{cor}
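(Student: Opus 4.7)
The plan is to upgrade the high-probability bound of \Cref{thm:out-of-sample} to an almost-sure eventual statement by a standard Borel--Cantelli argument. Define, for each $t \in \N$, the ``bad'' event
\[
    A_t \dfn \big\{ \omega \in \Omega \,\big|\, \exists (x,\md) \in \dom \cost_\hor : \Vht{t}(x,\md) < \cost_{\hor}(x,\md) \big\}.
\]
Then \Cref{thm:out-of-sample} gives $\prob[A_t] \leq \probt$, where $\probt = \nModes \sum_{k=t}^{t+\hor} \|\confb_k\|_1$. The desired conclusion---namely $\Vht{t}(x,\md) \geq \cost_\hor(x,\md)$ uniformly in $(x,\md)$ for all sufficiently large $t$---is precisely the statement that $\prob[\limsup_{t} A_t] = 0$.

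The key step is to verify summability of $(\probt)_{t \in \N}$. Interchanging the two sums (which is justified since all terms are nonnegative), one obtains
\[
    \tsum_{t=0}^{\infty} \probt
    = \nModes \tsum_{t=0}^{\infty} \tsum_{k=t}^{t+\hor} \|\confb_k\|_1
    = \nModes (\hor+1) \tsum_{k=0}^{\infty} \|\confb_k\|_1,
\]
where the last equality follows from the fact that each index $k$ appears in at most $\hor+1$ of the inner sums. By condition \ref{cond:conf-summable} of \Cref{assum:confidences}, the rightmost series is finite, and hence $\sum_{t} \probt < \infty$. The first Borel--Cantelli lemma then yields $\prob[\limsup_t A_t] = 0$, which is exactly \eqref{eq:upper-approximation-general}.

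I do not expect any substantial obstacle: the high-probability bound has already been established in the main theorem, the summability assumption is exactly what \Cref{assum:confidences}\ref{cond:conf-summable} provides, and the Borel--Cantelli argument is entirely routine. The only minor subtlety is confirming that the event $A_t$ is measurable and that the uniform-in-$(x,\md)$ quantification inside $A_t$ is preserved by taking the almost-sure complement---both of which follow because the violation event in \Cref{thm:out-of-sample} is already stated in this uniform form.
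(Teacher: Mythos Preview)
Your proposal is correct and follows essentially the same route as the paper: bound the violation probability by $\probt$ via \Cref{thm:out-of-sample}, verify summability of $(\probt)_{t}$ using \Cref{assum:confidences}\ref{cond:conf-summable}, and invoke the first Borel--Cantelli lemma. The only cosmetic difference is that the paper fixes $(x,\md)$ before applying Borel--Cantelli, whereas you work directly with the uniform ``bad'' event $A_t$---which is arguably cleaner, since \Cref{thm:out-of-sample} already delivers the bound uniformly in $(x,\md)$ and thus yields a single threshold $T$ valid for all state-mode pairs.
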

    \begin{proof}
    For fixed $(x, \md) \in \dom \cost_{\hor}$, 
    \cref{thm:out-of-sample} guarantees that 
    \( 
        \prob[\Vht{t}(x,\md) < \cost_{\hor}(x,\md)] \leq \probt, 
    \)
    where due to \cref{assum:confidences}, $\sum_{t=0}^{\infty} \probt = \nModes \sum_{k=0}^{\hor} \sum_{t=0}^{\infty} \nrm{\confb_{t+k}}_1 < \infty$.
    The claim then follows from the Borel-Cantelli lemma \cite[Thm. 4.3]{billingsley_ProbabilityMeasure_1995}.
    \end{proof}
    Having established a performance bound on the true cost,
}
we will now demonstrate consistency of the method, starting with the special 
case where the constraints are independent of the learner state (\Cref{thm:consistency-hard-constraints}),
before tackling the general case in \Cref{thm:consistency}.
To this end, we make the following assumption on the learner state and the corresponding ambiguity set. 
\begin{assumption}[Ambiguity decrease] \label{assum:amb-decrease}
    There exists a sequence $\{\ambdia_t\}_{t \in \N}$ with $\lim_{t \to \infty} \ambdia_t = 0$, such that
    \[
        \sup_{p, q \in \amb_{\idx{\confb_{t}}{i}}(s_t, \md)} \nrm{p-q}_1 \leq \ambdia_{t}\quad \as, \qquad \forall \md \in \W,\,\forall i \in \natseq{1}{\nbeta}, 
    \]
\end{assumption}
\Cref{assum:amb-decrease} states that the ambiguity sets ``shrink''
to a singleton with probability one. Since 
the ambiguity is expected to decrease as more information is observed,
this is a rather natural assumption, which is satisfied by most classes 
of ambiguity sets, such as the ones discussed in \Cref{sec:learning} (cf. \Cref{rmrk:radius-state}).

\begin{thm}[Asymptotic consistency with hard constraints]\label{thm:consistency-hard-constraints}
    Suppose that all constraints are hard constraints, 
    i.e., $\alpha = 0$, so that $\DR{\Ufeas}(z) = \Ufeas(x, \md)$ for all $z = (x,\lrn, \confb,\md)$.
    Then,
    for any state-mode pair $(x,\md) \in \dom \cost_\hor$, any initial 
    learner state $s_0 = s \in \lrnset$ and any initial confidence level $\confb_0 = \confb \in \I$, 
    the optimal cost of the \ac{\RAOCP} of horizon $\hor \geq 0$ almost surely converges from above to the true optimal cost. That is, with 
    probability one,
    \begin{align}
        &\lim_{t \to \infty} \Vht{t}(x,\md) = \cost_{\hor}(x,\md), \label{eq:convergence}
    \end{align}
    for all $(x, \md) \in \dom \cost_{\hor}$.
\end{thm}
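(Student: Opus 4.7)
My plan is to proceed by induction on the prediction horizon $k \in \natseq{0}{\hor}$, establishing for each $k$ that $\hat\cost_k(x,\lrn_t,\confb_t,\md) \to \cost_k(x,\md)$ almost surely as $t \to \infty$, for every $(x,\md)\in\dom\cost_k$. Specializing to $k = \hor$ then yields \eqref{eq:convergence}. The base case $k=0$ is immediate, since in the hard-constraint setting the terminal cost $\DR\Vf$ can be chosen as in \Cref{ex:traditional-robust} so that $\DR\Vf(x,\lrn,\confb,\md)=\Vf(x,\md)$, independent of $(\lrn,\confb)$.

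For the inductive step, the lower bound $\liminf_{t \to \infty}\hat\cost_k(x,\lrn_t,\confb_t,\md)\geq\cost_k(x,\md)$ is supplied by \Cref{cor:asymptotic-performance-bound} applied at horizon $k$. For the matching upper bound, fix $(x,\md)\in\dom\cost_k$ and let $u^\star$ be a minimizer of the one-step nominal Bellman recursion at $(x,\md)$. Because constraints are hard, $\DR\Ufeas(z)=\Ufeas(x,\md)$, so $u^\star$ is admissible for the DR problem at every $z_t=(x,\lrn_t,\confb_t,\md)$, and the DP recursion \eqref{eq:definition-DP} gives
\begin{equation*}
\hat\cost_k(x,\lrn_t,\confb_t,\md) \leq \ell(x,u^\star,\md) + \lrisk{\lrn_t}{\md}{\conf_t}\bigl[\hat\cost_{k-1}(\fa(z_t,u^\star,\mdnxt),\mdnxt)\bigr].
\end{equation*}

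Two convergences must then be chained. First, for each fixed $j\in\W$, $\fa(z_t,u^\star,j)$ has state component $f(x,u^\star,j)$ and learner/confidence components obtained from one step of the learner dynamics applied to $(\lrn_t,\confb_t)$; the shifted sequence $(\lrn_{t+1},\confb_{t+1})_{t\in\N}$ still satisfies \Cref{assum:stationary,assum:confidences}, so it converges a.s.\ to $(\lrn^\star,0)$, and the inductive hypothesis applied pointwise in $j$ yields $\hat\cost_{k-1}(\fa(z_t,u^\star,j),j)\to\cost_{k-1}(f(x,u^\star,j),j)$ a.s. Second, for any bounded $g:\W\to\Re$, $\lrisk{\lrn_t}{\md}{\conf_t}[g]\to\E_{\row{\transmat}{\md}}[g]$ a.s.: by \Cref{assum:learner} combined with summability of $\confb_t$ (\Cref{assum:confidences}), Borel--Cantelli guarantees that $\row{\transmat}{\md}\in\amb_{\conf_t}(\lrn_t,\md)$ for all sufficiently large $t$, and \Cref{assum:amb-decrease} bounds the $\ell_1$-diameter of the ambiguity set by $\ambdia_t\to 0$, giving the elementary estimate $|\lrisk{\lrn_t}{\md}{\conf_t}[g]-\E_{\row{\transmat}{\md}}[g]|\leq\ambdia_t\nrm{g}_\infty$ eventually.

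Combining these and passing to $\limsup$, I obtain $\limsup_{t\to\infty}\hat\cost_k(x,\lrn_t,\confb_t,\md) \leq \ell(x,u^\star,\md)+\E_{\row{\transmat}{\md}}[\cost_{k-1}(f(x,u^\star,\mdnxt),\mdnxt)]=\cost_k(x,\md)$, closing the induction. The main technical obstacle is the chaining step: interchanging $\limsup_t$ with the maximization defining the risk measure requires eventual uniform boundedness in $t$ of the quantities $\hat\cost_{k-1}(\fa(z_t,u^\star,j),j)$. Since $\W$ is finite, only $\nModes$ sequences need to be controlled, and such a uniform bound can be obtained by dominating $\hat\cost_{k-1}$ above by the fully robust value function (corresponding to $\amb\equiv\simplex_{\nModes}$), which is finite on $\dom\cost_{k-1}$ when $\hat\Xf$ is constructed as in \Cref{ex:traditional-robust}. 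The independence of $\DR\Ufeas$ from $(\lrn,\confb)$ under hard constraints is essential here and is what permits this direct dynamic-programming argument, distinguishing this case from the general constrained setting treated in \Cref{thm:consistency} below.
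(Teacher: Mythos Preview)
Your proposal is correct and follows the same inductive skeleton as the paper's proof (induction on the horizon, with the lower bound supplied by \Cref{cor:asymptotic-performance-bound} and the upper bound via convergence of the risk measure to the true conditional expectation using Borel--Cantelli together with \Cref{assum:amb-decrease}). The substantive methodological difference lies in how the upper bound is obtained. The paper establishes pointwise convergence of the entire Q-function $\Vtt{t}[\hor+1](x,\argdot,\md)\to\Vtil[\hor+1](x,\argdot,\md)$ for every $u$, and then invokes an epi-convergence lemma (\Cref{lem:epi-convergence}) to pass to convergence of the infima. You instead fix a single nominal minimizer $u^\star$ (which is feasible for the DR problem since $\DR\Ufeas=\Ufeas$ under hard constraints) and bound $\Vht{t}$ above by $\Vtt{t}(x,u^\star,\md)$, whose limit equals $\cost_\hor(x,\md)$ by optimality of $u^\star$.

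Your route is more elementary: it bypasses the variational-analytic machinery of epi-convergence entirely, at the cost of requiring existence of a minimizer $u^\star$ for the nominal problem (which is in any case guaranteed by \Cref{assum:regularity}). The paper's route, by contrast, gives a slightly stronger intermediate result---full pointwise convergence of the Q-function---and its explicit bound $\Vtt{t}-\Vtil\leq\sum_{\mdnxt}\ambdia_t\cost_\hor(f(x,u,\mdnxt),\mdnxt)+\epsilon$ avoids the limit-interchange issue you flag; you resolve the same issue by dominating $\hat\cost_{k-1}$ with the robust value function, which is an equally valid device. Both arguments share the same minor imprecision regarding the one-step-propagated learner state $\learner(\lrn_t,\confb_t,\md,j)$ versus the actual $\lrn_{t+1}$ when invoking the induction hypothesis; your treatment of this point is at the same level of rigor as the paper's.
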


\begin{proof}
    See \Cref{proof:thm:consistency-hard-constraints}.
\end{proof}

In the more general case, where aside from the cost, also the constraints 
are probabilistic and therefore dependent on the learner state, some 
additional assumptions on the problem ingredients 
are required. 

\begin{thm}[Asymptotic consistency under chance constraints] \label{thm:consistency}
    Let $\lrn^{\sstar} \in \lrnset$ denote a stationary learner 
    state (cf. \Cref{assum:stationary}) and 
    suppose that for a given state-mode pair $(x, \md) \in \dom \cost_{\hor}$, the following hold:
    \begin{conditions}
        \item \label{cond:blanket-assumption} \Cref{asm:assumption-consistency} holds, and $\Xf(\md)$ is closed and convex; 
        \item \label{cond:consistency-continuity} the costs $\ell(\argdot,\argdot, \md), \Vf(\argdot,\md)$,  constraints $g(\argdot, \argdot, \md, \mdnxt)$ and dynamics \revision{$\fa(\argdot, \argdot, \md, \mdnxt)$}{$f(\argdot, \argdot, \md, \mdnxt)$} are continuously differentiable; 
        \item \label{cond:conic} the ambiguity set $\amb_{\conf}(\lrn, \md)$ is conic representable with convex cone $\cone$ and parameters $E_{\md}(\lrn,\conf)$, $F_{\md}(\lrn,\conf)$ and $b_{\md}(\lrn, \conf)$ that depend smoothly on $\lrn$ and $\conf$;
        \item \label{cond:constraint-qualification} 
        Robinson's constraint qualification \cite[Def. 2.86]{bonnans_PerturbationAnalysisOptimization_2000} holds for \eqref{eq:scenario-tree-reformulation}, for initial states $(x,w)$ $\lrn^\iota = \lrn^\star, \beta^\iota = 0$.
    \end{conditions}
    Then, $\lim_{t \to \infty} \Vht{t}(x, \md) = \cost_\hor(x,\md), \as.$
\end{thm}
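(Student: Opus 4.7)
The strategy is to combine the upper bound established in \Cref{thm:out-of-sample} with a matching lower bound obtained via continuity of the optimal value of a parametric conic program, whose parameters $(\lrn,\confb)$ converge to $(\lrn^{\sstar}, 0)$, a point at which the nominal problem is recovered.

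The first step is to show that, almost surely, $\amb_{\idx{\conf_{t}}{i}}(\lrn_t, \md) \to \{\row{\transmat}{\md}\}$ in the Hausdorff sense, for every $\md \in \W$ and every $i \in \natseq{1}{\nbeta}$. The diameter of this set vanishes almost surely by \Cref{assum:amb-decrease}, so any limit point must be a singleton. To identify the singleton, observe that \Cref{assum:confidences} yields $\sum_t \idx{\conf_t}{i} < \infty$; combined with the coverage guarantee \eqref{eq:high-confidence} and the Borel--Cantelli lemma, this implies that almost surely $\row{\transmat}{\md} \in \amb_{\idx{\conf_t}{i}}(\lrn_t, \md)$ for all sufficiently large $t$. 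The only possible limit is therefore $\{\row{\transmat}{\md}\}$.

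The second step is to recast the \ac{\RAOCP} as the finite-dimensional scenario-tree nonlinear program \eqref{eq:scenario-tree-reformulation} using the conic representability assumed in condition \ref{cond:conic}. In this reformulation the ambiguity enters only through the smooth triple $(E_{\md}(\lrn,\conf), F_{\md}(\lrn,\conf), b_{\md}(\lrn, \conf))$, while condition \ref{cond:consistency-continuity} ensures $C^1$ dependence of the cost, dynamics and pointwise constraints on the decision variables, and closedness/convexity of $\Xf(\md)$ follows from condition \ref{cond:blanket-assumption}. The resulting program is smooth in $(\lrn, \confb) \in \lrnset \times \I$. Evaluated at $(\lrn^{\sstar}, 0)$, the ambiguity set collapses to $\{\row{\transmat}{\md}\}$ and $\DR{\alpha}_t = (\alpha-\bar\conf_t)/(1-\bar\conf_t) \to \alpha$ (by \Cref{asm:assumption-consistency}), so the unperturbed instance is exactly the nominal stochastic \ac{OCP} \eqref{eq:stochastic-OCP}, whose value is $\cost_{\hor}(x,\md)$.

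The third step is to invoke a standard perturbation result for nonlinear conic programs, e.g., \cite[Prop. 4.32]{bonnans_PerturbationAnalysisOptimization_2000}, whose hypotheses---smooth parametric dependence and Robinson's constraint qualification at the unperturbed problem---are precisely supplied by conditions \ref{cond:conic} and \ref{cond:constraint-qualification}. This delivers continuity of the optimal value at $(\lrn^{\sstar}, 0)$; since $(\lrn_t, \confb_t) \to (\lrn^{\sstar}, 0)$ along the event identified in Step~1, we conclude that $\Vht{t}(x,\md) \to \cost_{\hor}(x,\md)$ almost surely. The main obstacle is that as $\confb_t \to 0$ the conic data degenerate (the divergence-ball radii collapse to zero), and it is exactly Robinson's constraint qualification that rules out a pathological loss of regularity of the feasible set at the limit. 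A secondary issue is the multistage coupling of the ambiguity across the scenario tree; this is handled by noting that a single application of the continuity result for the joint parametric program suffices, since $\lrnset$ is compact and the convergence $(\lrn_t, \confb_t) \to (\lrn^{\sstar}, 0)$ is simultaneous at every node.
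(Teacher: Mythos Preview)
Your proposal is correct and follows essentially the same route as the paper: identify the limit $(\lrn^{\sstar},0)$ as the parameter value at which the scenario-tree reformulation \eqref{eq:scenario-tree-reformulation} reduces to the nominal problem, and then invoke a Bonnans--Shapiro perturbation result (the paper uses \cite[Prop.~4.4]{bonnans_PerturbationAnalysisOptimization_2000} rather than Prop.~4.32) under Robinson's constraint qualification to obtain continuity of the optimal value at that limit. Note that the opening reference to the upper bound of \Cref{thm:out-of-sample} is not actually needed---continuity alone delivers the full limit---and the paper indeed does not use it in this proof.
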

\begin{proof}
    \newcommand{\Nd}{\mathcal{N}}
    \newcommand{\Vp}{\tilde \cost_{x,\md}}
    \revision{}{
    Let $x$ and $\md$ be fixed and 
    consider the scenario tree formulation of the \ac{DR}-\ac{OCP} \eqref{eq:scenario-tree-reformulation}, 
    with parameters \( 
        \theta \dfn (
            \lrn^{\iota}, \beta^{\iota}
         )_{\iota \in \Nd}
    \), where $\Nd \dfn \nodes{[0,\hor-1]}$ 
    denotes the set of non-leaf nodes of the tree\footnote{See Appendix~\ref{sec:tractability} for scenario tree related notation.}.
    Problem \eqref{eq:scenario-tree-reformulation} can then be compactly written in the form (as we specify below)
    \begin{equation} \label{eq:proof-parametric-form}
        \begin{aligned} 
          \Vp(\theta) &\dfn \min_{\zeta} \Psi(\zeta) && \stt & \Gamma(\zeta, \theta) \in K.
        \end{aligned}
    \end{equation}
    By \Cref{prop:tractable}, we have that if the parameter vector 
    $\theta = 
    \theta_t 
    \dfn (
        \nodevar{\lrn_{t}}{\iota},
        \nodevar{\confb_t}{\iota}
    )_{\iota \in \Nd}$ is dynamically feasible, i.e., it satisfies \eqref{eq:params-dynamics} with values 
    \( 
        (
            \nodevar{\lrn_t}{0},\nodevar{\confb_t}{0}
        )
     = (
        \lrn_t, 
        \conf_t)
    \) 
    at the root node,
    then 
    \begin{equation} \label{eq:proof-parametric-equality}
        \Vp(\theta_t) = \Vht{t}(x,\md),  \quad \forall t \in \N.
    \end{equation}
    Our proof now consists of two main steps.
    First, we observe that in the limit point $\theta^\star \dfn \lim_{t \to \infty}\theta_t$ (which exists by \Cref{assum:stationary,assum:confidences}), $\Vp(\theta^\star)$ coincides with $\cost_{\hor}(x,w)$. Secondly, we show that the mapping 
    $\Vp(\argdot)$ is continuous at $\theta^\star$.
    }
    \begin{proofsteps}
       \item \label{proof:part1-equality}
       \Cref{assum:confidences} ensures that $\lim_{t \to \infty} \confb_t = \confb^{\sstar} = 0$ and consequently,
       by \Cref{cond:blanket-assumption}, $\lim_{t\to \infty} \DR \alpha_t = \alpha$.
       By \Cref{assum:amb-decrease} and the requirement \eqref{eq:high-confidence}, the Borel-Cantelli lemma \cite[Thm. 4.3]{billingsley_ProbabilityMeasure_1995} implies that 
       for every sequence $(p_t \in \amb_{\confb_t}(\lrn_t, \md))_{t \in \N}$, $\lim_{t\to \infty}p_t = \row{\transmat}{\md}$, a.s.
       Furthermore, as $\lrn_t \to \lrn^{\sstar}$, it follows by \Cref{cond:conic} that
       the mapping $(\lrn, \conf) \mapsto \amb_{\conf}(\lrn,\md)$ is continuous for all $\md \in \W$ and therefore $\amb_{0}(\lrn^{\sstar}, \md) = \{ \row{\transmat}{\md}\}, \forall \md \in \W$.
       \revision{}{
       Thus, by their definitions \eqref{eq:dr-expectation}--\eqref{eq:ambiguous-chance-constraint}, 
       \( \lrisk{\lrn^\star}{\md}{\conf^\star} \) and \( \lriskc{\lrn^\star}{\md}{\bar{\conf}^\star}{\alpha} \) reduce to 
       \( \E_{\row{\transmat}{\md}}\) and 
       \( 
        \AVAR^{\row{\transmat}{\md}}_{\alpha}
        \), respectively. 
       Finally, by \Cref{cond:blanket-assumption},
        \( 
            (x, \lrn, \confb, \md) \in \DR{\Xf} \iff (x,\md) \in \Xf
        \). 
       Therefore, the \ac{DR}-OCP \eqref{eq:risk-averse-OCP} reduces to 
       the nominal counterpart \eqref{eq:stochastic-OCP}, or equivalently 
       \( \Vp(\theta^\star) = \cost_\hor(x,\md) \). 
       }
     \item \label{proof:part2-bonnans}
     \revision{}{
     In order to show that $\Vp$ is continuous at $\theta^\star$, we will show that $\Psi$ and $\Gamma$ are continuously differentiable and $K$ is a closed convex set. Invoking furthermore \Cref{cond:constraint-qualification}, continuity of $\Vp$ then follows from \cite[Prop. 4.4]{bonnans_PerturbationAnalysisOptimization_2000}.
     }
    By inspection of \eqref{eq:scentree-cost} it is clear that $\Psi$ is a linear function, satisfying the requirements.
    We now proceed to demonstrate that the 
    constraints \eqref{eq:scentree-dynamics}--\eqref{eq:scentree-terminal-constraints} admit the desired 
    representation as well.  
    \begin{enumerate}
        \item \label{part:regular-constraints}The constraints \eqref{eq:scentree-dynamics}--\eqref{eq:scentree-term-cost-bounds}, and \eqref{eq:scentree-terminal-constraints} can be directly combined into the form $\Gamma_1(\revision{}{\zeta}, \theta) \in K_1 \dfn \{0\} \times \Re^{n_1}_{+} \times \DR{\Xf}$, where $\Gamma_1$ is a concatenation of the functions $\ell(\argdot,\argdot, \md), \Vf(\argdot,\md)$, and $f(\argdot, \argdot, \md, \mdnxt)$ and therefore continuously differentiable by \Cref{cond:consistency-continuity}.
        $K_1$ is convex due to \Cref{cond:blanket-assumption}.
        \item \label{part:epigraph-constraints}
        Finally, we consider the remaining constraints \eqref{eq:scentree-epigraph-cost} and \eqref{eq:scentree-epigraph-constraints}. Using \eqref{eq:risk-epigraph},
        a conic risk epigraph constraint $(\xi, \gamma) \in \epi \tilde{\rho}^\iota$ with parameters
        $\tilde{E}(\theta^\iota), \tilde{F}(\theta^\iota)$, $\tilde{b}(\theta^\iota)$ and cone $\tilde{\cone}$
        can be written in the desired form 
        \begin{equation} \label{eq:proof-epigraph-functional}
            \tilde{\Gamma}_2(\xi, \chi, \theta^\iota) \in \tilde{K}_2 \dfn \{0\} \times \tilde \cone^* \times \Re^{n_2}_+
        \end{equation}
        with $\chi$ an auxiliary variable and 
        \begin{equation*}
            \tilde{\Gamma}_2(\xi, \gamma, y, \theta^\iota) \dfn \trans{\smallmat{\tilde{E}(\theta^\iota)&
             \tilde{F}(\theta^\iota)&
             I&
             -\tilde{b}(\theta^\iota)}} y + \smallmat{0 \\ -1}\gamma + \smallmat{-I \\ 0}\xi,
        \end{equation*}
        which is differentiable provided that $\tilde{E}(\theta^\iota), \tilde{F}(\theta^\iota)$ and $\tilde{b}(\theta^\iota)$ are differentiable.
        This is ensured exactly by \Cref{cond:conic}, 
        for the cost risk measure $\lrisk{\lrn}{\md}{\conf}$, and 
        thus \eqref{eq:scentree-epigraph-cost} is of the form \eqref{eq:proof-epigraph-functional}. 
        
        Invoking 
        \Cref{prop:conic-robust-avar}, $\lriskc{\lrn}{\md}{\bar \conf}{\DR{\alpha}}$ is conic 
        representable with parameters 
        \begin{equation} \label{eq:cone-parameters-constraints}
        \begin{aligned}
            \bar{E}_{\md}(\lrn, \bar{\conf})
            {}&={}
            \smallmat{E_{\DR{\alpha}}\\0}, \;
            \bar{F}_{\md}(\lrn, \bar{\conf})
            {}={}
            \smallmat{-B & 0 \\
                          E_{\md}(\lrn, \bar{\conf}) & F_ \md(\lrn, \bar{\conf})},\\
            \bar{b}_\md(\lrn, \bar{\conf})
            {}&={}
            \smallmat{b' \\ b_\md(\lrn, \bar{\conf})},
            \;
            \bar{\cone} = \Re^{2(\nModes + 1)}_{+} \times \cone,
        \end{aligned}
        \end{equation}
        with $E_{\DR{\alpha}} = \trans{\smallmat{\1_{\nModes}&-\1_{\nModes}&\DR{\alpha} I & -I}}$, and $B$, $b'$ constant.
        \Cref{cond:blanket-assumption} requires that $\DR{\alpha} = \tfrac{\alpha - \bar{\conf}}{1 - \bar{\conf}}$ is 
        continuously differentiable in $\bar{\conf}$ for all ${\bar{\conf} < 1}$. The case $\bar \beta = 1$ is excluded by design and furthermore inconsequential as $\bar \beta \to 0$.
        As a result, \eqref{eq:scentree-epigraph-constraints}, i.e., 
        constraints $(g(x,u,\md, \mdnxt), 0) \in \epi \lriskc{\lrn}{\md}{\bar \conf}{\DR{\alpha}}$ can be written in the form \eqref{eq:proof-epigraph-functional}, 
        replacing $\xi$ with $g(x,u,\md,\mdnxt)$ -- which 
        preserves continuous differentiability, due to \Cref{cond:consistency-continuity} -- and 
        replacing the risk parameters $\tilde{E}(\theta^\iota), \tilde{F}(\theta^\iota)$ and $\tilde{b}(\theta^\iota)$ and $\tilde{\cone}$
        with those in \eqref{eq:cone-parameters-constraints}.
    \end{enumerate}
    
    \revision{}{
    Given the established differentiability of $\Gamma$, the final requirement of \cite[Prop. 4.4]{bonnans_PerturbationAnalysisOptimization_2000} is equivalent to 
    \Cref{cond:constraint-qualification}, and thus, the result applies.
    }
    \end{proofsteps}
    \revision{}{
    Combining \ref{proof:part1-equality} and \ref{proof:part2-bonnans}, we conclude that $\lim_{t \to \infty} \Vp(\theta_t) = \cost_{\hor}(x,\md)$, and the claim follows from \eqref{eq:proof-parametric-equality}.
    }
\end{proof}
\revision{
\begin{remark}
    The required differentiability of the learner dynamics in \Cref{cond:consistency-continuity} can in principle be relaxed
    \revision{be relaxed}{} as these dynamics are exogenous and can be precomputed 
    over the scenario tree. In this case, the parameter vector in 
    the proof of \Cref{thm:consistency} can be taken to be 
    $\{\nodevar{\lrn}{\iota}, \nodevar{\confb}{\iota}\}_{\iota \in \nodes{[0,\hor-1]}}$, leaving the remainder of the argument mostly intact. 
\end{remark}
}{
We conclude this section with a few brief remarks regarding the conditions of \Cref{thm:consistency}. First, we note that using the learning system 
described in \Cref{sec:learning} (including the ambiguity radius as part of the learner state as suggested in \Cref{rem:learner-sys}), \Cref{cond:conic} is satisfied for all 
divergence-based ambiguity sets considered in \Cref{tab:overview}. Indeed, in the conic formulations provided in Appendix~\ref{sec:conic-reps}, we find that in all cases, the empirical distribution and the ambiguity radius enter linearly in the final conic form of the constraints. 
Second, we remark that Robinson's constraints qualification (\Cref{cond:constraint-qualification}) can be regarded as a generalization of the more well-known Mangasarian-Fromowitz constraint qualification \cite[eq. 2.191]{bonnans_PerturbationAnalysisOptimization_2000} (see also \cite[Prop. 3.3.8]{bertsekas_NonlinearProgramming_1999} or \cite[4.10]{royset_OptimizationPrimer_2021}), which is in turn a generalization of the linear independence constraint qualification (LICQ). It is a very 
common regularity assumption, ensuring several useful properties such as boundedness of Lagrange multipliers. 
Of main importance for the purpose of showing consistency under probabilistic constraints, however, is that it provides metric 
regularity of the (now parametric) feasible set, which implies that 
the distance from the feasible set can upper bounded by a multiple of the constraint violation.
}
\section{Illustrative examples} \label{sec:numerical}

\begin{figure*}[ht!]
    {\footnotesize 
    \centering
    \begin{minipage}[t]{0.19\textwidth}
       \includegraphics{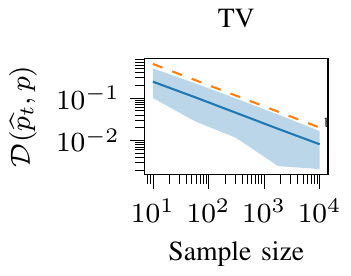}
    \end{minipage}\hfill
    \begin{minipage}[t]{0.19\textwidth}
        \includegraphics{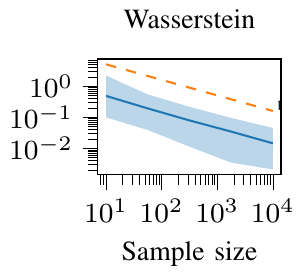}
    \end{minipage}\hfill
    \begin{minipage}[t]{0.19\textwidth}
        \includegraphics{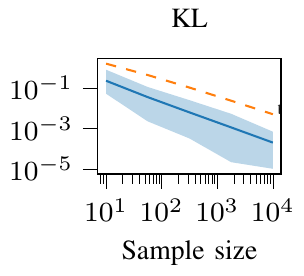}
    \end{minipage}\hfill
    \begin{minipage}[t]{0.19\textwidth}
        \includegraphics{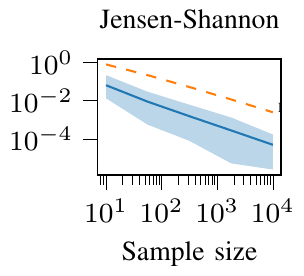}
    \end{minipage}\hfill
    \begin{minipage}[t]{0.19\textwidth}
        \includegraphics{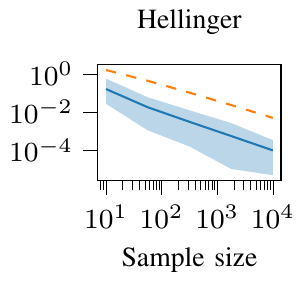}
    \end{minipage}
    }
    \vspace{-0.3cm}
    \caption{Radii of the ambiguity sets versus sample size $t$. The shaded area delineate the empirical $\beta_t$ upper and lower quantiles of $\D{}(\hat{p}_t, p)$ for different divergences $\D{}$, computed over 200 Monte-Carlo runs. The dashed lines represent the theoretical upper bounds given in \Cref{tab:overview}. $p_i = \nicefrac{1}{\nModes}, \; \forall i \in \natseq{1}{\nModes}$, $\nModes = 5$ and $\beta_t = e^{-\nModes} (t+1)^{-2}$. 
    }
    \label{fig:concentration}
\end{figure*}

\subsection{Ambiguity sets} \label{sec:experiments-ambiguity}
To illustrate the concentration inequalities provided in \Cref{thm:concentrations} and \Cref{tab:overview}, 
we select a sequence of confidence levels $\beta_t = e^{-\nModes} (t+1)^{2}$ satisfying summability (\Cref{assum:confidences}) and we plot the radii corresponding to the considered divergences as a function of the sample size $t$ (see \cref{fig:concentration}). For comparison, we recursively estimate
compute the empirical estimate $\hat{p}_t$ of a fixed probability vector $p \in \simplex_{\nModes}$ and plot the empirical
upper and lower $\beta_t$-quantile of $\D{}(\hat{p}_t, p)$ over 200 Monte-Carlo runs. 
For the Wasserstein distance, a quadratic kernel $\elem{\Wker}{\md}{\mdnxt} = (\md - \mdnxt)^2, \; \forall\md,\mdnxt \in \natseq{1}{\nModes}$ was used. 
For all divergences, the given bounds provide reasonable approximations, but in particular, we note that the total variation bound is almost tight. Furthermore, it only requires linear constraints in its conic representation \eqref{eq:conic-representation}, making it an attractive choice in terms of both statistical and computational complexity.
\subsection{Distributionally robust MPC}

{\color{black}}
We consider a Markov jump linear system\footnote{
    For more extensive simulation results, involving nonlinear dynamics and nonconvex constraints, see \cite{schuurmans_SafeLearningBasedMPC_2022_arxiv}
}
$x_{t+1} = A(\md_{t+1}) x_t + B(\md_{t+1}) u_t$, 
with 
\begin{equation} \label{eq:cooling-dynamics}
    A(\md)
    {}={}
    \smallmat{
        1+\frac{\md-1}{\nModes} & 0.01 \\
        0.01                & 1 + 2.5 \frac{\md-1}{\nModes}
    },
    \; 
    B(\md)
    {}={} I, \md \in \natseq{1}{\nModes}
\end{equation}
The state $x_t \in \Re^2$ of this system, inspired by \cite{recht_TourReinforcementLearning_2019},
models the deviation of temperatures from some nominal value of two adjacent servers in a data center. The actuators $u_t \in \Re^2$ correspond to the amount of 
heating ($u_t \geq 0$) or cooling ($u_t < 0$) applied to the corresponding machines.
The mode $w$ models the load on the servers. If $w=1$, the system is idle 
and no heat is generated. If $w = \nModes$, then the processors are fully occupied and a maximum amount of heat is added to the system.
Note that the second server generates more heat under increasing loads.
    The true-but-unknown transition probabilities are computed as
\[
    \elem{\transmat}{i}{j} = \tfrac{
        e^{
            -(j - \nicefrac{i}{2})^2
        }
    }{
        \sum_{w=1}^{\nModes} e^{
            -(w - \nicefrac{i}{2})^2
        }
    }, \; \forall i,j \in \natseq{1}{\nModes}.
\]

As in \cite{recht_TourReinforcementLearning_2019}, we will use a mode-independent quadratic cost $\ell(x,u,\md) = \nrm{x}_2^2 + 10^3 \nrm{u}_2^2$.

We impose hard constraints $-1.5 \leq u \leq 1.5$ on the actuation and 
(nominally) impose \revision{}{robust} chance constraints
\[ 
   \AVAR_{\alpha}^{\row{\transmat}{\md_t}}[
     \row{H}{i} x_{t+1} - h_i \mid x_{t} 
    ] \leq \alpha
   \text{ with } H = \smallmat{I_{\ns} \\ \trans{\1_{\ns}}}, h = \smallmat{1_{\ns} \\ 0.5},
\]
for all $t \in \natseq{0}{\hor-1}$, and $\alpha = 0.19$. Hence, in this example, we have $g_i(x,u,\md,\mdnxt) = \row{H}{i} (A(\mdnxt) x + B(\mdnxt) u) - h_i$.

We compute stabilizing terminal ingredients offline using standard techniques from robust control.
We compute a robust quadratic Lyapunov function $\Vf(x) = \trans{x} \Qf x$ along with a local linear 
control gain $K$, such that $\Vf\big( (A(\md) + B(\md) K) x \big) \leq -\ell(x, Kx), \forall \md \in \W$
by solving \iac{LMI} as in \cite{kothare_RobustConstrainedModel_1996}. The \ac{RCI} terminal set $\Xf$ is computed as the level set
$\Xf = \lev_{\leq \varepsilon} \Vf$, where $\varepsilon = \min_i\{ \nicefrac{h_i}{\nrm{\Qf^{-\nicefrac{1}{2}} \row{H}{i}}_2^2} \}$ is the largest value such that 
$\lev_{\leq \varepsilon} \Vf$ lies inside the polyhedral set $\{x \in \Re^{\ns} \mid H (A(\md) + B(\md)K) \leq h,\, \forall \md \in \W \}$.

For the \ac{DR} controllers below
we choose 
confidence levels $\confb_t = (\conf_t, \bar{\conf}_t)$ with $\conf_t = \bar{\conf}_t = 0.19 t^{-2} < \alpha$ for the cost and  
the constraints, respectively, ensuring that 
\Cref{assum:confidences} is satisfied. 
For simplicity, we use identical confidence levels $\bar{\conf}_t$ for all the constraints.

We compare the proposed \acs{DR}-\acs{MPC} controller with 
\begin{inlinelist*}
    \item the (nominal) stochastic \ac{MPC} controller (see \eqref{eq:stochastic-OCP}), which we call \textbf{omniscient} as it has access to the true transition matrix $\transmat$
    \item the \textbf{robust} \ac{MPC} controller, obtained by solving \eqref{eq:scenario-tree-reformulation}, taking the ambiguity set $\amb_{\conf}(\lrn, \md) = \amb_{\bar{\conf}}(\lrn, \md) = \simplex_{\nModes}$ to be the entire probability simplex, regardless of the mode or learner state.
\end{inlinelist*}
Both the \acp{LMI} involved in the offline computation of the terminal ingredients as the online risk-averse optimal control problem \eqref{eq:scenario-tree-reformulation} are solved using \textsc{mosek} \cite{mosek} through the \textsc{cvxpy} \cite{diamond2016cvxpy} interface.

We fix the number of modes to $\nModes = 3$, and take $\hor = 5$. All computations were performed on an Intel Core i7-7700K CPU at 4.20GHz.

\subsubsection{Timings}
To obtain an indication of the comparative computational 
burden of the different divergence-based ambiguity sets under consideration, we solve the described \ac{DR}-\ac{OCP}
using the considered divergences 10 times each, for random initial states. \Cref{tab:solver-times} reports the average  and maximum observed solver time. As expected, the \ac{TV} and Wasserstein divergence result require the least amount of time, as they introduce only linear constraint. The Hellinger divergence, which introduces second-order cone constraints 
results in slightly longer run times.
The \ac{KL} and JS divergence both introduce exponential cone constraints, resulting in the most computationally demanding \acp{OCP}.

\begin{table}[h]
\centering
\caption{Solver times [\SI{}{\milli\second}] for \eqref{eq:risk-averse-OCP} using different divergence-based ambiguity sets}
\label{tab:solver-times}
\begin{tabular}{cccccc}
    &TV   & Wasserstein&KL     &JS       & Hellinger\\
\midrule
avg. &50.14 & 50.63     &225.6  & 112.00  & 61.31\\
max. &52.20 & 52.02     &235.02 & 118.79  & 62.03 
\end{tabular}
\end{table}

\subsubsection{Closed-loop simulation}

Motivated by previous experiments, we now select the \ac{TV} ambiguity set, and perform a more extensive closed-loop simulation.
Fixing the initial state at $x = \trans{\smallmat{0.5& 0.5}}$, we perform 50 Monte-Carlo simulations 
of the described \ac{MPC} problems for 30 steps. As the simulation 
time is rather short, we initialize the \ac{DR} controller 
with 10 and 100 offline observations of the Markov chain to obtain 
more interesting comparisons. Hence, the simulation below essentially 
compares the controller responses after a sudden disturbance after 10 and 100 time steps.
All considered controllers are recursively feasible and mean-square stabilizing by construction.
By the nature of the problem set-up, the optimal behavior is to 
just barely stabilize the system with minimal control effort. However, 
the larger the uncertainty on the state evolution, the more the controller 
is forced to drive the states further away from the constraint boundary, 
leading to larger control actions and consequently, larger costs.  
\begin{figure}[h]
    \centering
    \includegraphics[]{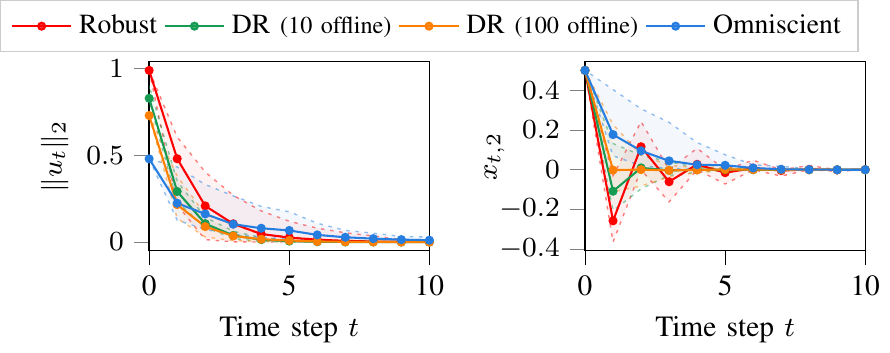}
    \vspace{-0.6cm}
    \caption{Control effort and second component of the state vector over 50 monte-carlo simulations. Full lines depict the means over the realizations and the shaded areas are delineated by the 0.05 and 0.95 quantiles.}
    \label{fig:control-actions}    
\end{figure}
    
\begin{figure}[h]
    \centering
    \includegraphics{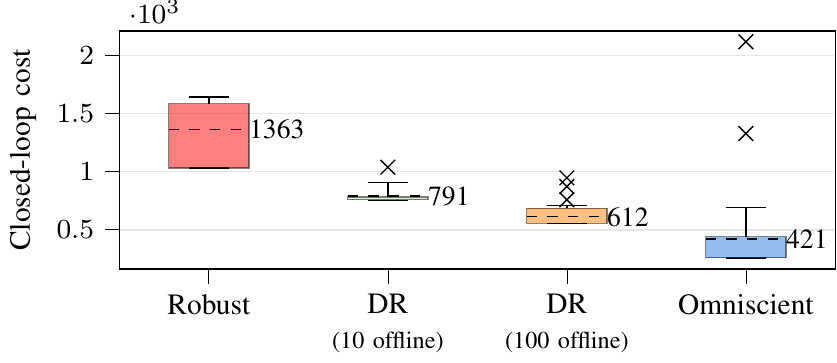}
    \label{subfig:costs-cooling}
    \caption{Box plot of the closed-loop cost over 50 monte-carlo simulations. The annotated lines show the mean. The whiskers depict the 0.05 and 0.95 quantiles.}
    \label{fig:boxplot-cost}
\end{figure}

This behavior can be observed in \cref{fig:control-actions,fig:boxplot-cost}. \cref{fig:control-actions} shows the controls and states over time and \cref{fig:boxplot-cost} presents the distribution of the closed-loop costs (sum of the stage costs over the simulation time). In the first time step, the robust controller 
takes the largest step, driving the state the furthest from the 
constraint boundary. As illustrated in \cref{fig:control-actions} (right), this is particularly pronounced for the second 
component of the state vector, as it is more sensitive to the mode (cf. \eqref{eq:cooling-dynamics}). The \emph{omniscient} stochastic \ac{MPC}, by contrast, has perfect knowledge of the transition probabilities, and by consequence is able to more slowly drive the state to the origin, reducing the control effort considerably.
The \ac{DR} controller naturally `interpolates' between these behaviors.
Initially, it performs only marginally better than the robust controller (due to the very limited number of online learning steps).
As it gets access to increasing sample sizes, however, it gradually
approximates the behavior of the omniscient controller, while guaranteeing satisfaction of the constraints throughout.

\subsubsection{Asymptotic consistency}
\begin{figure}[hb!]
    \centering
    \includegraphics{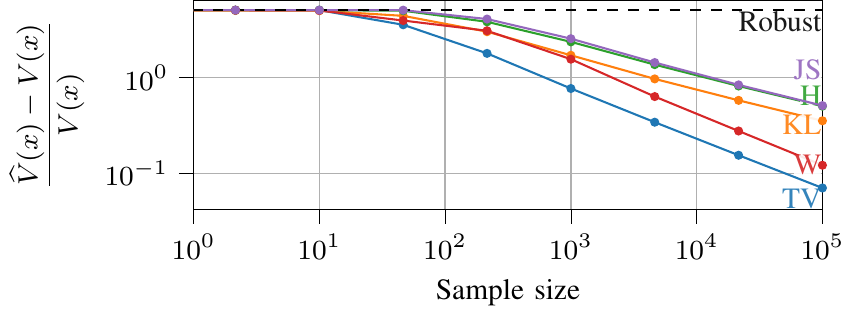}
    \vspace{-0.5cm}
    \caption{Relative suboptimality versus sample size for the example system \eqref{eq:cooling-dynamics}. The dashed line depicts the 
    relative suboptimality of the robust controller: $\nicefrac{(\cost_{\mathrm{r}}-\cost^{\sstar})}{\cost^{\sstar}}$.
    }
    \label{fig:convergence-cost}
\end{figure}
To illustrate the consistency results from \Cref{sec:consistency}, we fix the initial state-mode 
pair $x_0= \trans{\smallmat{0.25 & 0.25}}, w_0=1$ 
and recompute the solution to 
problem \eqref{eq:scenario-tree-reformulation} to obtain $\DR{\cost}^{(t)} \dfn \DR{\cost}^{(t)}_\hor (x_0, \md_0)$ for increasing sample sizes $t$.
For comparison, we compute 
\begin{inlinelist*}
    \item the true value $\cost^{\sstar} \dfn \cost_{\hor}(x_0, \md_0)$ by solving the stochastic \ac{MPC} problem \eqref{eq:stochastic-OCP}, using the true transition probabilities
    \item the robust value function $\cost_{\textrm{r}}$, obtained by 
    solving \eqref{eq:scenario-tree-reformulation}, taking the ambiguity set $\amb_{\conf}(\lrn, \md) = \simplex_{\nModes}$ to be 
    the entire probability simplex, regardless of the mode or learner 
    state.
\end{inlinelist*}

\Cref{fig:convergence-cost} shows the relative difference 
between the \ac{DR} value $\DR{\cost}^{(t)}$ and the true value $\cost^{\sstar}$ 
\revision{}{for the different statistical divergences}. At very low sample sizes, the \ac{DR} controllers achieve the same cost
as the robust controller.
However, as more data is gathered and the ambiguity set is updated, $\DR{\cost}^{(t)}$ approaches $\cost^{\sstar}$ 
from above.

\section{Conclusion}
{\color{black}}
We presented a distributionally robust \ac{MPC} strategy for Markov jump systems with unknown transition probabilities subject to general probabilistic constraints. 
We proved closed-loop constraint satisfaction, mean-square stability,
and consistency of the resulting controller for a broad range of data-driven ambiguity sets.

\bibliographystyle{ieeetr}
\bibliography{references}
\begin{IEEEbiography}[{\includegraphics[width=1in,height=1.25in,clip,keepaspectratio]{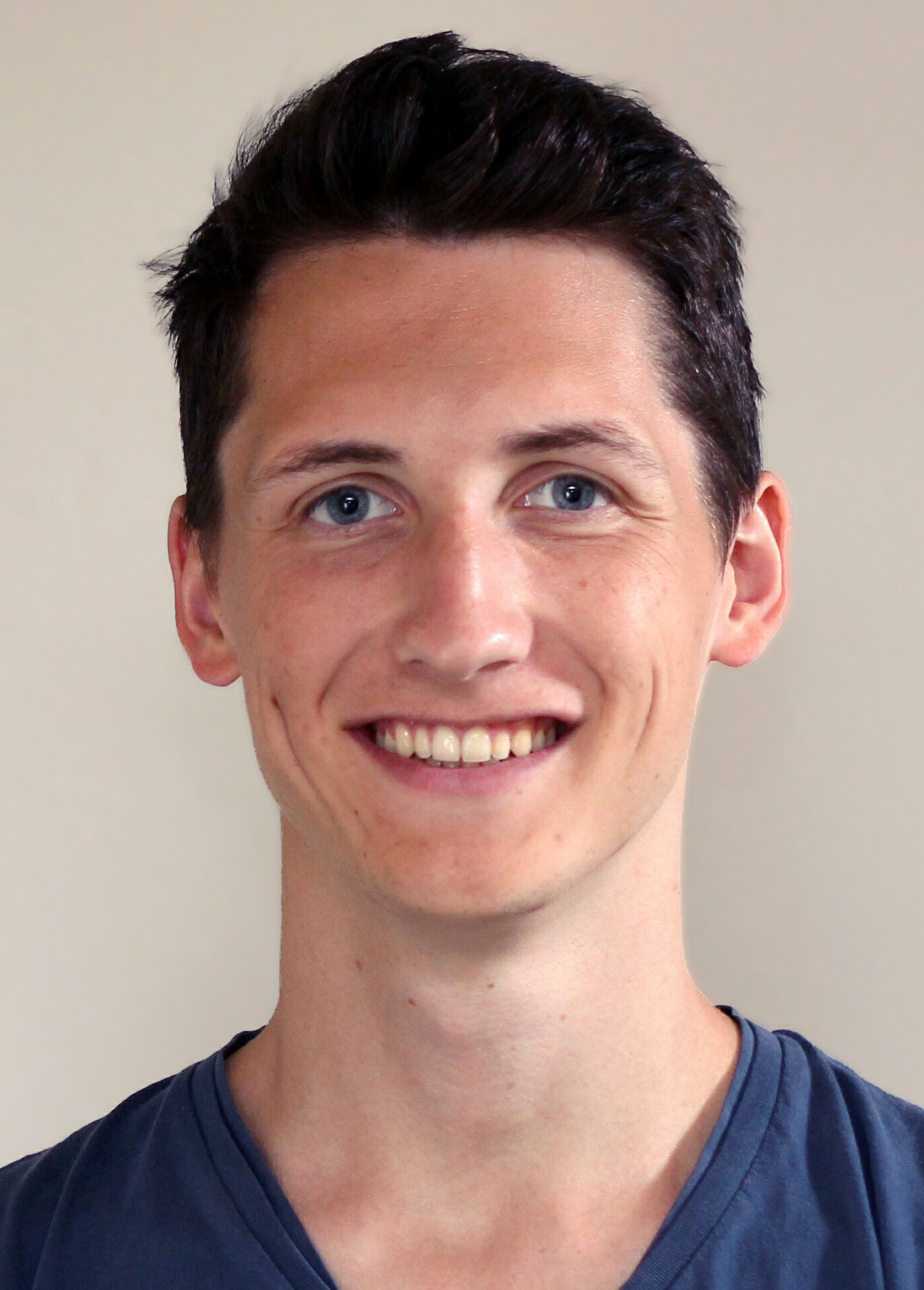}}]{Mathijs Schuurmans}
obtained a Bachelor's degree (BSc) in Electrical and Mechanical Engineering and a Master's (MSc) in Mathematical Engineering from KU Leuven, Leuven, Belgium in 2016 and 2018, respectively. 
He is currently a PhD candidate at the Department of Electrical Engineering (ESAT) of KU Leuven. His research is focused on data-driven model predictive control of stochastic systems, focusing on distributionally robust approaches for safety-critical applications in autonomous driving.    \end{IEEEbiography}
\begin{IEEEbiography}[{\includegraphics[width=1.1in,height=1.25in,clip,keepaspectratio]{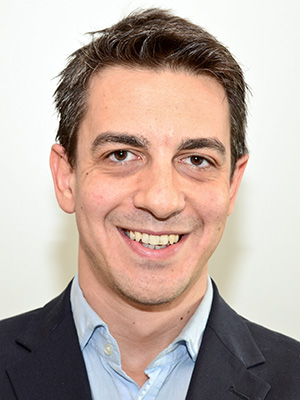}}]{Panagiotis Patrinos}
Panagiotis (Panos) Patrinos is associate professor at the Department of Electrical Engineering (ESAT) of KU Leuven, Belgium. In 2014 he was a visiting professor at Stanford University. He received his PhD in Control and Optimization, M.S. in Applied Mathematics and M.Eng. in Chemical Engineering from the National Technical University of Athens in 2010, 2005 and 2003, respectively. After his PhD he held postdoc positions at the University of Trento and IMT Lucca, Italy, where he became an assistant professor in 2012. His current research interests lie in the intersection of optimization, control and learning. In particular he is interested in the theory and algorithms for structured nonconvex optimization as well as learning-based, model predictive control with a wide range of applications including autonomous vehicles, machine learning and signal processing. He is the co-recipient of the 2020 best paper award in International Journal of Circuit Theory \& Applications \end{IEEEbiography}
\begin{appendix}
\ifJournal

\subsection{Conic representations of divergence-based ambiguity sets}\label{sec:conic-reps}
In this section, we briefly present conic representations for the divergences included in \Cref{tab:overview}. 
\rerevision{}{
These reformulations, which may not be unique, 
follow from rather straightforward manipulations, 
but are included here for completeness.
}
In the following, $\hat{p}$ represents a given empirical distribution and $r$ represents a given radius. $p$ is the candidate distribution over which the expectation 
is maximized. 
Below, we describe the set 
\(
    \{ p \in \simplex_{\nModes} \mid \D{}(\hat{p}, p) \leq r \}
\)
for different divergences $\D{}$. Note that 
in all cases, we have the constraint $p \in \simplex_{\nModes}$, which is represented by linear equality and inequality constraints.

   \subsubsection*{Total variation}
   \[ 
    \begin{aligned}
      &\nrm{p - \hat{p}}_{1} \leq r
      \iff
        \exists \nu \in \Re^{\nModes}: \begin{cases}
        -\nu_i \leq p_i - \hat{p}_i \leq \nu_i, \; \forall i \in \natseq{1}{\nModes}\\
        \sum_{i=1}^{\nModes} \nu_i \leq r.
      \end{cases}
    \end{aligned}
    \]
    \subsubsection*{Kullback-Leibler} \[
        \begin{aligned}
        & \tsum_{i=1}^{\nModes} \hat{p}_i \log \tfrac{\hat{p}_i}{p_i} \leq r \\
        \iff & \exists \nu \in \Re^{\nModes}: \begin{cases}
            \hat{p}_i \log \tfrac{\hat{p}_i}{p_i} \leq \nu_i & \forall i \in \natseq{1}{\nModes}\\ 
            \sum_{i=1}^{\nModes} \nu_i \leq r,
        \end{cases}\\
        \iff&\exists \nu \in \Re^{\nModes}:
        \begin{cases}
                (-\nu_i, \hat{p}_i, p_i) \in \cone_{\exp} &\forall i \in \natseq{1}{\nModes}\\ 
                \sum_{i=1}^{\nModes} \nu_i \leq r,
        \end{cases}\\
        \end{aligned}
    \]
    where 
    \( 
        \cone_{\exp} = \{ (x_1,x_2, x_3) :  x_1 \leq x_2 \log \left(
            \tfrac{x_3}{x_2}
        \right)
        , x_2 > 0 \} 
    \)
    denotes the exponential cone, the dual cone of which is 
    \( 
        \cone^{*}_{\exp} \dfn \{ (u,v,w) : u < 0, w > 0, - u \log (-\tfrac{u}{w}) + u -v \leq 0\}.
    \)
    \subsubsection*{Jensen-Shannon}    
    \[ 
    \begin{aligned}
      \D[J](\hat{p}, p) &\dfn \tfrac{1}{2}
      \Big(
        \DKL\big(
            p,\tfrac{1}{2}(\hat{p} + p)
        \big) 
            + 
        \DKL\big(
            \hat{p}, \tfrac{1}{2}(\hat{p} + p)
            \big)
        \Big) \leq r \\
        \iff&
        \sum_{i=1}^d p_i \log \left( \frac{2 p_i}{p_i + \hat{p}_i} \right) + \hat{p}_i \log \left( \frac{2 \hat{p}_i}{p_i + \hat{p}_i} \right)\leq 2r \\
        \iff& \exists x, y \in \Re^{\nModes}: \begin{cases}
            \sum_{i} x_i + y_i \leq 2 r\\
            (-x_i, p_i, \tfrac{1}{2}(p_i + \hat{p}_i)) \in \cone_{\exp}\\
            (-y_i, \hat{p}_i, \tfrac{1}{2}(p_i + \hat{p}_i)) \in \cone_{\exp}.
        \end{cases}
    \end{aligned}
    \]
    \subsubsection*{Hellinger} 
    Using the fact that 
    \[ 
    \begin{aligned}
        \D[H]^2(\hat{p}, p) &= \tsum_{i=1}^{d} (\sqrt{p_i} - \sqrt{\hat{p}_i})^2 = 2 (1 - \tsum_{i=1}^d \sqrt{p_i \hat{p}_i}), 
    \end{aligned}
    \] 
    we have 
    \(
        \D[H]^2(\hat{p}, p) \leq r 
    \iff
        \hat{r} \dfn 1 - \frac{r}{2}   
        {}\leq{}
        \sum_{i=1}^d \sqrt{p_i \hat{p}_i}
    \), and thus 
    \[ 
    \begin{aligned}
    \D[H]^2(\hat{p}, p) \leq r 
        \iff&\exists \nu \in \Re^d : 
    \begin{cases}
        \hat{r} \leq \tsum_{i=1}^d \sqrt{\hat{p}_i} \nu_i \\ 
        \nu_i^2 \leq p_i, & \forall i \in \natseq{1}{\nModes}.
    \end{cases}
    \end{aligned}
    \]
    The constraint $\nu_i^2 \leq p_i$ can be reformulated as 
    \[ 
        \left \|
              (2 \nu_i, 
              p_i - 1)
        \right \|_2 \leq p_i + 1 \iff (p_i+1, 2\nu_i, p_i-1) \in \cone_{\operatorname{SO}}, 
    \]
    with 
    \(
    \cone_{\operatorname{SO}} \dfn \{ x = (t, y) \in \Re^{1+n} \mid t \geq \nrm{y}_2 \}
    \) the (self-dual) second order (or quadratic) cone. 
    \subsubsection*{Wasserstein} 
    Assuming that $\W$ is a metric space with distance metric 
    $\dist: \W \times \W \to \Re_+$, then we define 
    the fixed-support $q$-Wasserstein distance, for $q > 0$ as 
    \[ 
        \D[{W}][q](p, \hat{p}) \dfn \min_{\Pi \in \Re^{\nModes\times \nModes}_+} \Big\{ 
            \big( \tsum_{i,j \in \W} \elem{\Pi}{i}{j}\elem{\Wker}{i}{j}^q \big)^{\nicefrac{1}{q}} \Big | \begin{array}{r}
                \Pi \1_\nModes = p \\
                \Pi^\top \1_{\nModes} = \hat{p}
            \end{array} 
        \Big\},
    \]
    where $\Wker \dfn (\dist(i,j))_{i,j \in \W} \in \Re_+^{\nModes \times \nModes}$ is the \textit{distance kernel} induced by $\dist$. Since $\D[W][q](p, \hat{p}) \leq r$ is equivalent to $\D[W][q](p, \hat{p})^q \leq \tilde{r} \dfn r^q$, the $q$-Wasserstein case can be reduced to 
    the $1$-Wasserstein case with distance kernel $\tilde \Wker \dfn \Wker^q$. Thus, we may drop the subscript $q$ without loss of generality. From the definition, we immediately obtain the representation  
    \[ 
    \begin{aligned}
          &\D[{W}][1](p, \hat{p}) \leq r
          \iff \exists \Pi \geq 0 :
           \begin{cases}
                \tsum_{i,j \in \W} \elem{\Pi}{i}{j}\elem{\tilde{\Wker}}{i}{j} \leq \tilde r,\\
                \Pi \1_\nModes = p, \; 
                \Pi^\top \1_{\nModes} = \hat{p}, 
           \end{cases}
    \end{aligned}
    \]
    consisting only of linear constraints. 

\subsection{Conic reformulation over scenario trees} \label{sec:tractability}

\rerevision{}{
By definition, a conic risk measure $\rho$ is given as the optimal value of a standard \ac{CP}. Under strong duality, which holds if the \ac{CP} is strictly feasible \cite[Prop. 2.1]{shapiro_DualityTheoryConic_2001}, its epigraph $\epi \rho \dfn \{ (G,\gamma) \in \Re^{\nModes + 1} \mid \gamma \geq \rho[G]\}$ can be characterized as \cite{sopasakis_risk-averse_2019c}
\begin{equation} \label{eq:risk-epigraph}
    \ifSingleColumn
    \epi \rho = \{(G,\gamma) \in \Re^{\nModes + 1} \mid \exists y: \trans{E}y=G, \trans{F}y=0, y \geqc{\cone^*} 0, \gamma \geq \trans{b}y \}.\else 
    \epi \rho = \left\{
        (G,\gamma) \in \Re^{\nModes + 1} \sep \begin{matrix}\exists y: \trans{E}y=G, \trans{F}y=0,\\
         y \in \cone^*, \gamma \geq \trans{b}y \end{matrix} 
        \right\}.
    \fi
\end{equation}
\revision{
Since the \ac{TV} ambiguity set defined in \Cref{sec:learning} as well as the ambiguity set inducing the average value-at-risk are polyhedra, they are conic representable (taking the nonnegative orthant as the cone $\cone$).
Similarly, the \ac{KL} ambiguity set, and similarly the entropic value-at-risk \cite{ahmadi-javid_EntropicValueatRiskNew_2012} are known 
to be conic representable \cite{sopasakis_risk-averse_2019c,parys_DataDecisionsDistributionally_2020}.
}
{
Aside from the ambiguity sets described in Appendix~\ref{sec:conic-reps},
}
it is not difficult to show that the worst-case average value-at-risk \eqref{eq:ambiguous-chance-constraint} over a conic representable ambiguity set also 
defines a conic risk measure:
\begin{proposition} \label{prop:conic-robust-avar}
    Let $\amb = \{ p \in \simplex_{\nModes} \mid \exists \nu: \bar E p + \bar F \nu \leqc{\cone} \bar b \}$ be a conic-representable ambiguity set. Then, the risk measure
    $\bar \rho = \max_{p \in \amb} \AVAR_{\alpha}^{p}$ is a conic risk measure.
\end{proposition}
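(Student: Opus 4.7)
The plan is to lift the classical $\AVAR$ conic representation through the outer maximization over $\amb$, exploiting the fact that the defining data of $\AVAR_{\alpha}^{p}$ depends affinely on its reference distribution $p$. First I would recall that for any fixed $p \in \simplex_{\nModes}$, $\AVAR_{\alpha}^{p}$ is itself a conic risk measure whose associated dual ambiguity set admits the representation \eqref{eq:conic-representation} with matrices $E = \trans{\smallmat{\1_{\nModes} & -\1_{\nModes} & \alpha I & -I}}$, $F = 0$, cone $\Re_+^{2(\nModes+1)}$, and right-hand side of the form $b(p) = b' + Bp$ that is \emph{affine} in $p$ \cite{sopasakis_risk-averse_2019c}. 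This affine-in-$p$ structure is the key property that makes the rest of the argument work.

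Next, I would expand $\bar \rho[\xi] = \max_{p \in \amb} \AVAR_{\alpha}^{p}[\xi]$ using this dual form of $\AVAR$, obtaining a joint maximization over $(\mu, p, \nu)$, where $\mu$ is the AVAR dual variable and $\nu$ is the auxiliary variable in the conic description of $\amb$:
\[
    \bar \rho[\xi] = \max_{\mu, p, \nu}\ \trans{\mu}\xi \quad \stt \quad E\mu - Bp \leqc{\Re_+^{2(\nModes+1)}} b', \quad \bar E p + \bar F \nu \leqc{\cone} \bar b.
\]
Since the objective depends only on $\mu$ and neither coupling constraint involves $\xi$, merging the two maxima is immediate. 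Stacking the two cone inequalities on the product cone $\tilde \cone = \Re_+^{2(\nModes+1)} \times \cone$ --- which is proper as a Cartesian product of proper cones --- and grouping $(p, \nu)$ into a single auxiliary vector yields the canonical form required by \Cref{def:conic-risk-measure}:
\[
    \bar \amb = \Big\{ \mu \in \simplex_{\nModes} \sep \exists (p, \nu):\ \smallmat{E \\ 0}\mu + \smallmat{-B & 0 \\ \bar E & \bar F}\smallmat{p \\ \nu} \leqc{\tilde \cone} \smallmat{b' \\ \bar b} \Big\}.
\]
This identifies $\bar \rho[\xi] = \max_{\mu \in \bar \amb}\trans{\mu}\xi$ as the conic risk measure induced by $\bar \amb$.

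The only item needing care is verifying that $\bar \amb$ is a convex subset of $\simplex_{\nModes}$ so that $\bar \rho$ is genuinely coherent. This follows because the feasible set of triples $(\mu, p, \nu)$ is convex --- by convexity of $\amb$ together with the affine dependence of $b$ on $p$ --- and its projection onto the $\mu$-coordinates inherits convexity. I do not anticipate any essential obstacle; the argument is essentially an absorb-the-outer-maximum trick enabled by the affine-in-$p$ structure of the $\AVAR$ conic representation, reducing the proof to bookkeeping once the correct variables are introduced.
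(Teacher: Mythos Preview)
Your proposal is correct and follows essentially the same approach as the paper: recall the conic representation of $\AVAR_\alpha^p$ with its right-hand side affine in $p$, then absorb the outer maximization over $p \in \amb$ by treating $(p,\nu)$ as a joint auxiliary variable on the product cone $\Re_+^{2(\nModes+1)} \times \cone$, arriving at exactly the stacked conic form you wrote. The paper's proof is terser but identical in substance; your added remark about $\bar\amb \subseteq \simplex_\nModes$ and its convexity is a harmless sanity check that the paper leaves implicit (it is already enforced by the $\AVAR$ constraints $\trans{\1}\mu = 1$, $\mu \geq 0$).
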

\begin{proof}
    For any reference distribution $p\in \simplex_\nModes$,
    the ambiguity set $\amb_{\AVAR}$ inducing $\AVAR_{\alpha}^{p}$ can be written in the form 
    \eqref{eq:conic-representation} with 
    $E=\trans{\smallmat{\1_{\nModes}&-\1_{\nModes}&\alpha I&-I}}$,
    $F=0$, $\cone=\Re^{2(\nModes + 1)}_{+}$ the nonnegative orthant,
    and
    $b = \trans{\smallmat{1&-1&\trans{p}&0}}$ 
    (which is of the form $b=b' + Bp$) \cite{sopasakis_risk-averse_2019c}.
    Writing out the definition of $\max_{p \in \amb} \AVAR_{\alpha}^{p}$ and rearranging terms yields 
    \(
      \max_{p \in \amb} \AVAR_{\alpha}^{p}[z] 
      = \max_{\mu} \left\{ \trans{\mu}z \sep \exists \nu: \smallmat{E\\0}\mu + \smallmat{-B & 0\\\bar E & \bar F} \nu \leqc{\Re^{2(\nModes + 1)}_{+} \times \cone}
      \smallmat{b' \\ \bar b} \right\},
    \)
    which is exactly of the form \eqref{eq:conic-representation}.
\end{proof}
Thus, if for all $(\lrn, \md, \conf) \in \lrnset \times \W \times [0,1]$, $\amb_{\conf}(\lrn,\md)$ is conic representable, then $\lrisk{\lrn}{\md}{\conf}$ and $\lriskc{\lrn}{\md}{\conf}{\alpha}$ are conic risk measures.
Therefore, problem \eqref{eq:risk-averse-OCP} can be cast to 
a finite dimensional optimization problem, as we now describe. 
}

\revision{}{
Since $\W$ is a finite set, the possible realizations of $\seq{\md}{0}{\hor}$ can be enumerated and represented on a \emph{scenario tree}.
}
\ifJournal
A scenario tree with horizon $N$
represents the natural filtration of $(\Omega, \F, \prob)$ induced by $\seq{\md}{0}{\hor}$ \cite{pflug_MultistageStochasticOptimization_2014}.
Any adapted stochastic process $(z_t)$ can be represented on such a scenario tree. We denote the value of $z_{t}$ corresponding to a 
node $\iota$ in the tree as $\nodevar{z}{\iota}$.

The set of nodes in the tree are partitioned into time steps or \emph{stages}. The set of nodes at a stage $k$ is denoted by $\nodes{k}$,
and similarly, for $k_0 < k_1 \in \natseq{0}{\hor}$,   
$\nodes{[k_0,k_1]} = \bigcup_{k=k_0}^{k_1} \nodes{k}$.
For a given node $\iota \in \nodes{t}$, $t \in \natseq{0}{\hor-1}$, we call a node $\iota_\pplus \in \nodes{t+1}$ that can be reached from $\iota$ in one step a \emph{child} node, denoted $\iota_{\pplus} \in \child{\iota}$.
An $\hor$-step policy $\pi$ can thus be identified with 
a collection of control actions 
\(
    \useq =\{
        u^\iota \mid \iota \in \nodes{[0, \hor-1]} 
\}.
\)
It therefore suffices to optimize over a finite number of decision variables rather than infinite-dimensional control laws.

\begin{proposition}[\rerevision{Tractable}{Finite-dimensional} reformulation] \label{prop:tractable}
    Given an initial state $(x, \md)$ and parameters $(\nodevar{\lrn}{\iota}, \nodevar{\beta}{\iota})_{\iota \in \nodes{[0,\hor-1]}}$,
    consider an $N$-stage scenario tree and the corresponding optimal control problem
    \begin{subequations} \label{eq:scenario-tree-reformulation}
    \begin{align} 
        &\minimize_{\xi, \tau, \xseq, \useq}\; \nodevar{\tau}{0} + \nodevar{\xi}{0}
        \label{eq:scentree-cost}\\
        &\stt{}\;
            \nodevar{x}{0}=x, \nodevar{\md}{0} = \md, \nodevar{x}{\iota_\pplus} = f(\nodevar{x}{\iota}, \nodevar{u}{\iota}, \nodevar{\md}{\iota_\pplus}),
            \label{eq:scentree-dynamics}  
            \\
        &\phantom{\stt{}\;{}} \ell(\nodevar{x}{\iota}, \nodevar{u}{\iota},\nodevar{\md}{\iota}) \leq \nodevar{\tau}{\iota},
        \label{eq:scentree-stage-cost-bounds}  \\
        &\phantom{\stt{}\;{}} \Vf(\nodevar{x}{\iota_\hor}, \nodevar{\md}{\iota_\hor}) \leq \nodevar{\xi}{\iota_{\hor}}+\nodevar{\tau}{\iota_{\hor}},
        \label{eq:scentree-term-cost-bounds}\\
        &\phantom{\stt{}\;{}}(\nodevar{\tau}{\iota_\pplus}+\nodevar{\xi}{\iota_\pplus}, \nodevar{\xi}{\iota}) \in \epi \lrisk{\nodevar{\lrn}{\iota}}{\nodevar{\md}{\iota}}{\nodevar{\conf}{\iota}},
        \label{eq:scentree-epigraph-cost}\\
        &\phantom{\stt{}\;{}} \big( g(\nodevar{x}{\iota}, \nodevar{u}{\iota}, \nodevar{\md}{\iota}, \nodevar{\md}{\iota_\pplus}), 0 \big) \in \epi \lriskc{\nodevar{\lrn}{\iota}}{\nodevar{\md}{\iota}}{\nodevar{\bar \conf}{\iota}}{\nodevar{\hat{\alpha}}{\iota}},
        \label{eq:scentree-epigraph-constraints}\\
        &\phantom{\stt{}\;{}} (\nodevar{x}{\iota_\hor}, \nodevar{\lrn}{\iota_\hor}, \nodevar{\confb}{\iota_\hor}, \nodevar{w}{\iota_\hor}) \in \DR{\Xf},
        \label{eq:scentree-terminal-constraints}
    \end{align}
    \end{subequations}
    for $\iota \in \nodes{[0,\hor-1]}$, $\iota_{\pplus} \in \child{\iota}$, and $\iota_{\hor} \in \nodes{\hor}$, where $\xseq = (\nodevar{x}{\iota})_{\iota \in \nodes{[0,\hor]}}$ and $\useq$ as defined above.
    If the parameters $(\nodevar{s}{\iota}, \nodevar{\beta}{\iota})$ satisfy for all $\iota \in \nodes{[0,\hor-1]}$ that 
    \begin{equation} \label{eq:params-dynamics}
        \nodevar{\lrn}{\iota_\pplus} = \learner(
            \nodevar{\lrn}{\iota}, \nodevar{\confb}{\iota}, \nodevar{\md}{\iota}, \nodevar{\md}{\iota_\pplus})
        \text{ and }
        \nodevar{\confb}{\iota_\pplus} = \confdyn(
            \nodevar{\confb}{\iota}),
    \end{equation}
    then the optimal cost of \eqref{eq:scenario-tree-reformulation} is equal to $\DR{V}_\hor(z)$.
\end{proposition}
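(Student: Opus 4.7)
The natural approach is backward induction on the scenario tree, mirroring the dynamic programming recursion \eqref{eq:definition-DP}. For each node $\iota \in \nodes{t}$, let $J^{\iota}$ denote the optimal value of \eqref{eq:scenario-tree-reformulation} restricted to the subtree rooted at $\iota$, with its root state fixed to $\nodevar{z}{\iota} \dfn (\nodevar{x}{\iota}, \nodevar{\lrn}{\iota}, \nodevar{\confb}{\iota}, \nodevar{\md}{\iota})$, and objective $\nodevar{\tau}{\iota} + \nodevar{\xi}{\iota}$. I will show by induction that $J^{\iota} = \hat{\cost}_{\hor - t}(\nodevar{z}{\iota})$ for all $t \in \natseq{0}{\hor}$; specializing to the root $\iota = 0 \in \nodes{0}$ then yields the stated identity. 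Throughout, the parameter dynamics \eqref{eq:params-dynamics} ensure that $\nodevar{z}{\iota}$ coincides with the trajectory of \eqref{eq:augmented-dynamics} induced by the mode sequence along the path from the root to $\iota$, so that the ambiguity sets appearing in \eqref{eq:scentree-epigraph-cost}--\eqref{eq:scentree-epigraph-constraints} at node $\iota$ are precisely those used inside $\T$ at the corresponding augmented state.

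\textbf{Base case.} At a leaf $\iota \in \nodes{\hor}$, no children contribute, constraint \eqref{eq:scentree-epigraph-cost} is vacuous, and combining \eqref{eq:scentree-term-cost-bounds} with the terminal constraint \eqref{eq:scentree-terminal-constraints} gives $\nodevar{\tau}{\iota} + \nodevar{\xi}{\iota} \geq \Vf(\nodevar{x}{\iota}, \nodevar{\md}{\iota}) + \delta_{\DR{\Xf}}(\nodevar{z}{\iota}) = \DR{\Vf}(\nodevar{z}{\iota}) = \hat{\cost}_0(\nodevar{z}{\iota})$. Minimizing $\nodevar{\tau}{\iota} + \nodevar{\xi}{\iota}$ attains this lower bound, so $J^{\iota} = \hat{\cost}_0(\nodevar{z}{\iota})$.

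\textbf{Inductive step.} Assume $J^{\iota_\pplus} = \hat{\cost}_{\hor - t - 1}(\nodevar{z}{\iota_\pplus})$ for every child $\iota_\pplus \in \child{\iota}$ of an interior node $\iota \in \nodes{t}$. Then, at $\iota$, constraint \eqref{eq:scentree-stage-cost-bounds} forces $\nodevar{\tau}{\iota} \geq \ell(\nodevar{x}{\iota}, \nodevar{u}{\iota}, \nodevar{\md}{\iota})$, while the epigraph constraint \eqref{eq:scentree-epigraph-cost}, combined with the induction hypothesis applied to each child, enforces
\[
\nodevar{\xi}{\iota} \geq \lrisk{\nodevar{\lrn}{\iota}}{\nodevar{\md}{\iota}}{\nodevar{\conf}{\iota}}\!\left[\hat{\cost}_{\hor-t-1}\bigl(\fa(\nodevar{z}{\iota}, \nodevar{u}{\iota}, \mdnxt), \mdnxt\bigr)\right],
\]
where the child values of the augmented state on the tree coincide with $\fa(\nodevar{z}{\iota}, \nodevar{u}{\iota}, \mdnxt)$ by \eqref{eq:scentree-dynamics} and \eqref{eq:params-dynamics}. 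The DR feasibility constraint $\nodevar{u}{\iota} \in \DR{\Ufeas}(\nodevar{z}{\iota})$ is captured by \eqref{eq:scentree-epigraph-constraints}: invoking \Cref{prop:conic-robust-avar}, the robustified \ac{AVaR} is a conic risk measure, so the condition $\bigl(g(\cdot), 0\bigr) \in \epi \lriskc{\nodevar{\lrn}{\iota}}{\nodevar{\md}{\iota}}{\nodevar{\bar{\conf}}{\iota}}{\nodevar{\hat{\alpha}}{\iota}}$ is equivalent to $\lriskc{\nodevar{\lrn}{\iota}}{\nodevar{\md}{\iota}}{\nodevar{\bar{\conf}}{\iota}}{\nodevar{\hat{\alpha}}{\iota}}[g(\cdot)] \leq 0$, i.e. to \eqref{eq:DR-constraints}. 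Minimizing $\nodevar{\tau}{\iota} + \nodevar{\xi}{\iota}$ over $\nodevar{u}{\iota}$ (subject to $\nodevar{u}{\iota} \in U$ and the DR feasibility) reproduces exactly $\T \hat{\cost}_{\hor - t - 1}(\nodevar{z}{\iota}) = \hat{\cost}_{\hor - t}(\nodevar{z}{\iota})$.

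\textbf{Main obstacle.} The delicate point is tightness of the epigraph relaxations: one must verify that at the optimum, the inequalities \eqref{eq:scentree-stage-cost-bounds}--\eqref{eq:scentree-epigraph-cost} bind, so that the auxiliary variables $(\nodevar{\tau}{\iota}, \nodevar{\xi}{\iota})$ genuinely represent stage cost plus cost-to-go rather than overestimates that might decouple across the tree. This follows from the monotonicity of coherent (and hence conic) risk measures together with the upward direction of \eqref{eq:scentree-epigraph-cost}: any slack in a child's $\nodevar{\tau}{\iota_\pplus} + \nodevar{\xi}{\iota_\pplus}$ propagates upward as slack in the parent's $\nodevar{\xi}{\iota}$, which is penalized in the root objective via the telescoping sum induced by \eqref{eq:scentree-epigraph-cost}; non-negativity of $\ell$ and $\DR{\Vf}$ (\Cref{assum:regularity}) rules out pathological cancellations. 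Hence the backward induction closes, giving $J^{0} = \hat{\cost}_{\hor}(z) = \DR{\cost}_{\hor}(z)$ as claimed.
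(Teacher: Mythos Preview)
Your proof is correct; the paper itself does not give an argument but simply states that ``the claim is a direct application of the results in \cite{sopasakis_risk-averse_2019c}.'' What you have written is precisely the backward-induction argument underlying that reference, so you have supplied the details the paper omits rather than taken a different route. One small remark: when you invoke \Cref{prop:conic-robust-avar} to justify that \eqref{eq:scentree-epigraph-constraints} encodes $\nodevar{u}{\iota}\in\DR{\Ufeas}(\nodevar{z}{\iota})$, conic representability is not actually needed for the equivalence $(g,0)\in\epi\bar\rho \iff \bar\rho[g]\le 0$ --- that is just the definition of the epigraph --- whereas the conic structure is what makes \eqref{eq:risk-epigraph} a finite-dimensional description.
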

\begin{proof}
    The claim is a direct application of the results in \cite{sopasakis_risk-averse_2019c}.
\end{proof}
If 
\begin{inlinelist*}
    \item the costs $\ell(\argdot, \argdot, \md)$,
$\Vf(\argdot, \md)$, the constraint mappings $g(\argdot,\argdot, \md,\mdnxt)$ and terminal set $\DR{\Xf}$ are convex
    \item the ambiguity sets $\amb_{\nodevar{\conf}{\iota}}(\nodevar{\lrn}{\iota}, \nodevar{\md}{\iota})$ are 
    conic representable
    \item the dynamics $f(\argdot, \argdot, \md)$ are affine
\end{inlinelist*}
for all $\md \in \W$, then it follows from \Cref{prop:conic-robust-avar} that both $\lrisk{\nodevar{\lrn}{\iota}}{\nodevar{\md}{\iota}}{\nodevar{\conf}{\iota}}$ and $\lriskc{\nodevar{\lrn}{\iota}}{\nodevar{\md}{\iota}}{\nodevar{\bar \conf}{\iota}}{\nodevar{\hat{\alpha}}{\iota}}$ are conic risk measures and thus 
\eqref{eq:scenario-tree-reformulation} can be reduced to a convex conic program. See \Cref{sec:numerical} for a numerical 
illustration, as well as \cite{schuurmans_LearningBasedRiskAverseModel_2020} for a case study in 
a slightly simplified setting.
Note that since the learner and confidence dynamics $\learner$ and $\confdyn$ are eliminated before solving the optimization problem, they need not be affine for the problem to remain convex. For nonlinear dynamics $f(\argdot, \argdot, \mdnxt)$, the problem is no longer convex but can in practice still be solved effectively with standard NLP solvers.

\subsection{Technical Lemma}

\begin{lem}[Infimum convergence] \label{lem:epi-convergence}
    Consider a sequence of proper, \ac{lsc} functions $\cost^{(t)} : \Re^{n} \rightarrow \barre$, $t \in \N$ and a proper, \ac{lsc}, level-bounded function $\cost : \Re^{n} \rightarrow \barre$. 
    Suppose that
    \begin{conditions}
        \item \label{cond:eventual-upper} (Eventual upper bound) there exists a $T \in \N$, such that for all $t>T$, and for all $u$, $\cost^{(t)}(u) \geq \cost(u)$;
       \item \label{cond:pwise} (Pointwise convergence) $\cost^{(t)} \pto \cost$. That is, for all $u$, $\lim_t \cost^{(t)}(u) = \cost(u)$.
    \end{conditions}
    Then, $\lim_{t} \inf_{u} \cost^{(t)}(u) = \inf_{u} \cost(u)$.
\end{lem}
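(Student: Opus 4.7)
The plan is to establish the sandwich
\[
\textstyle
\inf_u \cost(u)
\;\leq\; \liminf_{t} \inf_u \cost^{(t)}(u)
\;\leq\; \limsup_{t} \inf_u \cost^{(t)}(u)
\;\leq\; \inf_u \cost(u),
\]
from which the claim follows. The two nontrivial inequalities separate cleanly along the two hypotheses, each using exactly one of them. I expect neither step to be a serious obstacle; in fact, the lower semicontinuity and level-boundedness of $\cost$ appear to be redundant for this particular conclusion, although both are natural if one wants the stronger companion property (consistent with the lemma's name) that accumulation points of minimizers of $\cost^{(t)}$ converge to minimizers of $\cost$.

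For the left-most inequality, I would argue directly from \ref{cond:eventual-upper}: for every $t > T$ the pointwise bound $\cost^{(t)}(u) \geq \cost(u)$ implies $\inf_u \cost^{(t)}(u) \geq \inf_u \cost(u)$, and taking $\liminf$ over $t$ preserves the bound.

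The right-most inequality I would obtain by evaluating $\cost^{(t)}$ at approximate minimizers of $\cost$. Since $\cost$ is proper, $\inf_u \cost(u) < +\infty$. Treating first the case $\inf_u \cost(u) \in \Re$: for each $\varepsilon > 0$, pick $u_\varepsilon \in \Re^n$ with $\cost(u_\varepsilon) \leq \inf_u \cost(u) + \varepsilon$. Then \ref{cond:pwise} gives $\cost^{(t)}(u_\varepsilon) \to \cost(u_\varepsilon)$, and since $\inf_u \cost^{(t)}(u) \leq \cost^{(t)}(u_\varepsilon)$,
\[
\limsup_{t} \inf_u \cost^{(t)}(u)
\;\leq\; \lim_{t} \cost^{(t)}(u_\varepsilon)
\;=\; \cost(u_\varepsilon)
\;\leq\; \inf_u \cost(u) + \varepsilon,
\]
so letting $\varepsilon \downarrow 0$ closes the argument. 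The degenerate case $\inf_u \cost(u) = -\infty$ would be handled analogously by replacing $\inf \cost + \varepsilon$ with an arbitrary $M \in \Re$ and letting $M \to -\infty$. If a more conceptual route is preferred, one can alternatively observe that \ref{cond:eventual-upper} together with \ref{cond:pwise} and the lower semicontinuity of each $\cost^{(t)}$ yields epi-convergence $\cost^{(t)} \to \cost$, while \ref{cond:eventual-upper} combined with the level-boundedness of $\cost$ makes $\{\cost^{(t)}\}_{t > T}$ eventually level-bounded; the conclusion then follows from standard infimum-convergence results in variational analysis. The only step requiring any genuine care is the degenerate case $\inf \cost = -\infty$, and that is handled by a trivial modification of the $\varepsilon$-argument above.
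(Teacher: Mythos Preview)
Your direct sandwich argument is correct, and it is genuinely different from the paper's proof. The paper proceeds by first verifying epi-convergence $\cost^{(t)} \eto \cost$ via the characterization in \cite[Prop.~7.2]{rockafellar_VariationalAnalysis_1998} (using \ref{cond:eventual-upper} together with lower semicontinuity of $\cost$ for the $\liminf$ inequality along arbitrary sequences, and \ref{cond:pwise} along constant sequences for the $\limsup$ inequality), then invokes \ref{cond:eventual-upper} and level-boundedness of $\cost$ to obtain eventual level-boundedness of the family $(\cost^{(t)})$ via \cite[Ex.~7.32]{rockafellar_VariationalAnalysis_1998}, and finally appeals to \cite[Thm.~7.33]{rockafellar_VariationalAnalysis_1998} for the conclusion. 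Your elementary $\varepsilon$-argument bypasses this machinery entirely and, as you correctly observe, does not actually require lower semicontinuity or level-boundedness of $\cost$ for the bare infimum-convergence statement; those hypotheses only become essential if one also wants convergence of minimizers, which the epi-convergence route would deliver for free. Notably, you anticipate the paper's approach almost verbatim in your final paragraph as the ``more conceptual route,'' so you have in effect produced both proofs.
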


\begin{proof}
    By \ref{cond:eventual-upper} it follows that for any sequence $u_t \to \bar{u}$, 
    \[
        \liminf_{t} \cost^{(t)}(u_t) = \liminf_{\stackrel{u \to \bar{u}}{t \to \infty}}\cost^{(t)}(u) 
    \geq \liminf_{u \to \bar{u}} \cost(u) \geq \cost(\bar{u}),
    \]
    where the first inequality follows from \Cref{cond:eventual-upper},
    and the second inequality follows from lower semicontinuity of $\cost$. 
    Moreover, fixing $(u_t)_{t\in\N}$ to be the constant sequence $u_t = \bar{u}$, it follows from \ref{cond:pwise} that $\limsup_t \cost^{(t)}(u_t) = \lim_t \cost^{(t)}(\bar{u}) \leq \cost(\bar{u})$. Invoking \cite[Prop. 7.2]{rockafellar_VariationalAnalysis_1998}, we conclude that $\cost^{(t)} \eto \cost$, i.e., $\cost^{(t)}$ epi-converges to $\cost$. 
    Secondly, from \Cref{cond:eventual-upper} and the level-boundedness of 
    $\cost$, it follows that $(\cost^{(t)})_{t\in \N}$ is eventually level-bounded \cite[Ex. 7.32]{rockafellar_VariationalAnalysis_1998}.
    The claim then follows from \cite[Thm. 7.33]{rockafellar_VariationalAnalysis_1998}.
\end{proof}
\fi 
\subsection{Deferred proofs}

\begin{appendixproof}{lem:DR-MSS}
    Let $(z_t)_{t\in\N}=(x_t, \lrn_t, \confb_t, \md_t)_{t\in\N}$ denote 
    the stochastic process satisfying dynamics \eqref{eq:closed-loop}, for some initial state $z_0 \in \dom \cost$. For ease of notation, let us define $\cost_t \dfn \cost(z_t), t \in \N$. 
    Due to nonnegativity of $V$, 
    \[
        \begin{aligned}
            \E\left[\tsum_{t=0}^{k-1} c \nrm{x_t}^2\right]
            &\leq \E\left[V_k + \tsum_{t=0}^{k-1} c \nrm{x_t}^2\right]\\
            & =\E\left[V_k - V_0 + \tsum_{t=0}^{k-1} c \nrm{x_t}^2\right] + V_0,
        \end{aligned}
    \]
    where the second equality follows from the fact that $\cost_0$ is deterministic. By linearity of the expectation, we can in turn write
    \[
    \begin{aligned}
        \E\big[V_k{-}V_0{+}c \tsum_{t=0}^{k-1} \nrm{x_t}^2 \big] 
                &= \E\left[\tsum_{t=0}^{k-1} V_{t+1}{-}V_{t} {+} c \nrm{x_t}^2 \right]\\  
                &= \tsum_{t=0}^{k-1} \E \left[ V_{t+1} {-} V_t{+} c \nrm{x_t}^2 \right].
    \end{aligned}
    \]
    Therefore,
    \begin{equation} \label{eq:step-proof}
        \begin{aligned}
            \E\big[c \tsum_{t=0}^{k-1} \nrm{x_t}^2\big]&{-}V_0
            \leq
            \tsum_{t=0}^{k-1} \E \left[V_{t+1}{-}V_t \right]{+}c \E \left[ \nrm{x_t}^2 \right].
        \end{aligned}
    \end{equation}
    Recall that $\conf_t$ denotes the coordinate of $\confb_t$ corresponding to the risk measures in the cost function \eqref{eq:cost-function-risk}. Defining the event
    \(
        E_t \dfn \{
                \omega \in \Omega \mid
                    \row{\transmat}{\md_t(\omega)} \in
                    \amb_{\conf_{t}}(\lrn_t(\omega), \md_t(\omega))
            \},
    \)
    and its complement $\lnot E_t = \Omega \setminus E_t$, we can use the law of total expectation to write 
    \begin{multline*}
        \E\left[ V_{t+1} - V_t \right]
        = \E\left[ V_{t+1} - V_{t} \mid E_t \right] {\prob[E_t]}\ifSingleColumn\else\\\fi
        +\E\left[ V_{t+1} - V_{t} \mid \lnot E_t \right] {\prob[\lnot E_t]}.
    \end{multline*}
    By condition \eqref{eq:high-confidence}, $\prob[\lnot E_t] < \conf_t$. From \Cref{cond:RPI,cond:boundedness}, it follows that $z_t \in \dom \cost$, $\forall t \in \natseq{0}{k}$ and that there exists a $\dV \geq 0$ such that $V(z) \leq \dV$, for all $z \in \dom V$. Therefore, $\E[V_{t+1} - V_t \mid \lnot \E_t] \leq \dV$.
    Finally, by \Cref{cond:lyap-decrease}, 
    \(
        \E\left[ V_{t+1}- V_{t} \mid E_t \right] 
        \leq \E[-c \nrm{x_t}^2 \mid E_t ].
    \) Thus,
    \extrastep{
        This follows from explicitly writing out the expectation 
        \[ 
            \begin{aligned}
                \E[V_{t+1} - V_t] &= \E[\smashoverbracket{\E[V_{t+1} \mid V_t] }{\leq V_t - c \nrm{x}^2} ] - \E[V_t] \\
                &\leq \E[V_t - c \nrm{x}^2] - \E[V_t] \\
                &= - c \nrm{x}^2 
            \end{aligned}
        \]
    }
    \begin{align*}
        \E\left[ V_{t+1} - V_t \right] 
        &\leq \E\left[-c \nrm{x_t}^2 \mid E_t\right] \prob[E_t] + \dV \conf_t.
    \end{align*}
    This allows us to simplify expression \eqref{eq:step-proof} as
    \begin{align*}
        &\E\left[c \tsum_{t=0}^{k-1} \nrm{x_t}^2\right] -V_0\\
        &\leq \tsum_{t=0}^{k-1}-c \E\left[\nrm{x_t}^2 \mid E_t\right] \prob[E_t] + \dV \conf_t + c \E \left[ \nrm{x_t}^2 \right]\\  
        &\leq \tsum_{t=0}^{k-1} -c \E\left[\nrm{x_t}^2 \mid E_t\right] \prob[E_t] + \dV \conf_t \\
        &\qquad
         + c \E \left[ \nrm{x_t}^2 \mid E_t \right] \prob[E_t] 
         +c \E \left[ \nrm{x_t}^2 \mid \lnot E_t \right] \prob[\lnot E_t]\\  
        &= \tsum_{t=0}^{k-1} \dV \conf_t +c \E \left[ \nrm{x_t}^2 \mid \lnot E_t \right] \prob[\lnot E_t]\\
        &\leq \tsum_{t=0}^{k-1} \conf_t (\dV +  c \E\left[\nrm{x_t}^2 \mid \lnot E_t \right] ).
    \end{align*}
    Since \revision{$\dom \cost$}{$\dom \cost(\argdot, \lrn_t, \confb_t, \md_t)$} was assumed to be compact and to contain the origin, there exists an 
    $r \geq 0$ such that $\nrm{x_t}^{2} \leq r,\, \forall t \in \natseq{0}{k}$. Therefore,   
    \begin{align*}
        \E\left[\tsum_{t=0}^{k-1} \nrm{x_t}^2\right]
        &\leq \tfrac{V_0}{c} + \left(\tfrac{\dV}{c} + r \right) \tsum_{t=0}^{k-1} \conf_t,
    \end{align*}
    which remains finite as $k \to \infty$, since $(\conf_t)_{t \in \N}$ is summable. Thus, necessarily $\lim_{t\to \infty} \E[\nrm{x_t}^2] = 0$.
\end{appendixproof}

\begin{appendixproof}{thm:MPC-stability}
    First, note that using the monotonicity of coherent risk measures \cite[Sec. 6.3, (R2)]{shapiro2009lectures}, a straightforward inductive argument allows us to show that 
    under \Cref{cond:TVfleqVf},
    \begin{equation} \label{eq:monotonicity-T}
        \T \DR{V}_\hor \leq \DR{V}_\hor, \quad \forall \hor \in \N.
    \end{equation}
    Since $\bar{\mathcal{Z}} \subseteq \dom \DR{\cost}_\hor$, recall that by definition \eqref{eq:definition-DP}, we have for any $z = (x,\lrn, \confb, \md) \in \bar{\mathcal{Z}}$ that
    \begin{equation*}
        \hat{\cost}_N(z) = \ell(x,\DR{\law}_\hor(z), \md) 
        + \lrisk{\md}{\lrn}{\conf}\big[
            \hat{\cost}_{N-1}
            \big(
                \fc(z,\mdnxt), 
                \mdnxt
            \big)
        \big],   
    \end{equation*}
    where $\conf$ denotes the component of $\confb$ corresponding to the cost. 
    Therefore, we may write   
    \begin{align*}
        &\lrisk{\md}{\lrn}{\conf}\left[
            \hat{\cost}_\hor(\fc(z,\mdnxt), \mdnxt)
        \right] - 
        \hat{\cost}_\hor(z)  \\ 
        &=\lrisk{\md}{\lrn}{\conf}\left[
            \hat{\cost}_\hor(\fc(z,\mdnxt), \mdnxt)
        \right] 
        - \ell(x,\DR{\law}_\hor(z), \md) 
        \\ &\; - \lrisk{\md}{\lrn}{\conf}
        \big[
        \hat{\cost}_{N-1}
        \big(
            \fc(z,\mdnxt), \mdnxt
        \big)
        \big]
        \leq - \ell (x, \DR{\law}_\hor(z), \md) \leq - c \nrm{x}^2,
    \end{align*}
    where the first inequality follows by \eqref{eq:monotonicity-T} and monotonicity of coherent risk measures. The second inequality follows from \Cref{cond:stage-cost-bound}.
    Combined with \Cref{cond:locally-bounded}, this implies that 
    $\cost: z \mapsto \DR{\cost}_\hor(z) + \delta_{\bar{\mathcal{Z}}}(z)$ satisfies the conditions of \Cref{lem:DR-MSS} and the assertion follows. 
\end{appendixproof}

\rerevision{}{
For the following, it will be convenient to define 
$\Vtt{t}$ and $\Vtil$ as
\begin{equation} \label{eq:def-q-func}
    \begin{aligned}
    &\Vtt{t}(x,u,\md) \dfn \ell(x,u,\md)
                + \lrisk{\lrn_t}{\md}{\conf_{t}}[\Vht{t+1}[\hor-1](f(x, u, \mdnxt), \mdnxt)], \\
    &\Vtil(x,u,\md) \dfn \ell(x,u,\md)
    + \E_{\row{\transmat}{\md}}[\cost_{\hor-1}(f(x, u, \mdnxt), \mdnxt) {\mid} x, \md], 
    \end{aligned}
\end{equation}
and let 
\( 
    \Uht{t}(x, \md) \dfn \DR{\Ufeas}(x, \lrn_t, \confb_t, \md),
\)
so we may write
\begin{subequations}
    \begin{align}\label{eq:definition-vt-from-q-func}
        \Vht{t}(x,\md) &= \inf_{u \in \Uht{t}(x, \md)} \Vtt{t}(x,u,\md)
    \text{ and }\\
        \cost_{\hor}(x,\md) &= \inf_{u \in \Ufeas(x,\md)} \Vtil(x,u,\md).
    \end{align}
\end{subequations}

\begin{appendixproof}{thm:out-of-sample}

    We will show \eqref{eq:upper-approximation-hp} by induction on $\hor$.
    For $\hor=0$, we have $\Vht{t}[0] \equiv \Vf \equiv \cost_0$, thus, 
    \eqref{eq:upper-approximation-hp} holds with 
    $\probt[t][0] = 0$,
    $\forall t \in \N$, and the claim holds trivially.
    For the induction step, we define the events
    \begin{subequations}
        \begin{align}
            \label{eq:proof-def-evtCov}
            \evtCov &\dfn \{ 
            \row{\transmat}{\md} 
                \in
            \cap_{i=1}^{\nbeta}
                \amb_{\beta_{t,i}}(\lrn_t, \md), 
                \; \forall \md \in \W
            \}, \\ 
            \evtCost &\dfn \{ 
            \Vht{t} (x,\md) \geq \cost_N(x,\md), \; \forall (x, \md) \in \Re^{\ns} \times \W\}, 
        \end{align}
    \end{subequations}
    for $\hor \in \N, t \in \N$.
    The induction hypothesis now reads  
    \begin{equation} \label{eq:proof-costbound-induction-hypothesis}
        \prob[\evtCost[t][\hor-1]] \geq \probt[t][\hor-1] = \nModes \tsum_{k=0}^{N-1} \nrm{\confb_{t+k}}_1   ,\quad \forall t \in \N,
    \end{equation}
    and our goal is to show that this implies that $\prob[\evtCost] \geq \probt[t][\hor]$, $\forall t \in \N$.
    
    Given the occurrence of event $\evtCost[t+1][\hor-1]$, 
    the monotonicity of risk measures\cite[Sec. 6.3, (R2)]{shapiro2009lectures}
    ensures that 
            \(
            \Vtt{t}(x,u,\md) \geq \ell(x,u,\md)
                        + \lrisk{\lrn_t}{\md}{\conf_{t}}[\cost_{\hor-1}(f(x, u, \mdnxt), \mdnxt)],
            \)
    for all $(x, u, \md) \in \Re^{\ns} \times \Re^{\na} \times \W$, and $t \in \N$.
    Furthermore, conditional on event $\evtCov$, \eqref{eq:risk-upper-bound} implies
    that 
    \(
            \lrisk{\lrn_t}{\md}{\conf_t} \geq \E_{\row{\transmat}{\md}} \text{ and }
            \lriskc{\lrn_t}{\md}{\bar\conf_t}{\DR{\alpha}_t} \geq \AVAR^{\row{\transmat}{\md}}_{\alpha}
    ,\)
    uniformly.
    Combining this fact with 
    \eqref{eq:chance-constraint} and \eqref{eq:DR-constraints},
    we obtain the implication
    \[ 
            \begin{aligned}
                    \evtCost[t+1][\hor-1], 
                    \evtCov[t]
                \implies 
                \begin{cases}
                    \Vtt{t}(x,\md,u) \geq \Vtil(x,\md,u),\\
                    \Uht{t}(x, \md) \subseteq \Ufeas(x, \md),\\
                \end{cases}
            \end{aligned}
    \]
    $\forall (x, u, \md) \in \Re^{\ns} \times \Re^{\na} \times \W$, 
    and hence,
    \begin{equation*}
        \begin{aligned} 
            \Vht{t}(x, \md) &= \min_{u \in \Uht{t}(x,\md)} \Vtt{t}(x,u,\md)
                \geq \min_{u \in \Uht{t}(x,\md)} \Vtil(x,u,\md)\\
                &\geq \min_{u \in \Ufeas(x,\md)} \Vtil(x,u,\md) = \cost_{N}(x,\md), 
        \end{aligned}
    \end{equation*}
    which describes exactly the event $\evtCost[t][\hor]$. 
    Thus, we have shown that 
    $\prob\big[\evtCost[t][\hor]] \geq \prob[\evtCov, \evtCost[t+1][\hor-1]\big]$.
    By the union bound, we now obtain
    \begin{equation} \label{eq:proof-induction-step-result}
        \begin{aligned}
            \prob[\evtCost]
            &\geq 
                \prob\big[\evtCov, \evtCost[t+1][\hor-1]\big]
            \geq 
                1 - \big(\prob[\lnot \evtCov] + \prob[\lnot \evtCost[t+1][\hor-1]]\big) \\
            &\geq
                1 - \big(\nModes  \nrm{\confb_t}_1 + \probt[t+1][\hor-1]),
        \end{aligned}
    \end{equation}
    where in the final inequality, $\prob[\lnot \evtCost[t+1][\hor-1]]$
    was bounded using the 
    induction hypothesis \eqref{eq:proof-costbound-induction-hypothesis}
    and $\prob[\lnot \evtCov]$ was replaced by another application of the union bound:
    \begin{align}
            \notag
            \prob[\lnot \evtCov] &= \prob \big[
                \exists w \in \W: 
                \row{\transmat}{\md} 
                    \notin
                \cap_{i=1}^{\nbeta} \amb_{\conf_{t,i}}(\lrn_t, \md)
            \big] \\ 
            \notag
            &\leq 
                \tsum_{w \in \W} 
                \tsum_{i = 1}^{\nbeta} 
                \prob [
                \row{\transmat}{\md} 
                    \notin
                \amb_{\beta_{t,i}}(\lrn_t, \md) 
                ]\\
            \label{eq:proof-bound-coverage}
            &\leq \nModes \tsum_{i=1}^{\nbeta}\conf_{t,i} = \nModes \nrm{\confb_t}_{1}.
    \end{align}
    Thus, substituting the expression for $\probt[t+1][\hor-1]$ from the induction hypothesis 
    \eqref{eq:proof-costbound-induction-hypothesis} into the result 
    \eqref{eq:proof-induction-step-result}, we 
    obtain that \eqref{eq:upper-approximation-hp} holds with 
    \[  
        \begin{aligned}
          \probt &\dfn d \nrm{\confb_t}_1 + \probt[t+1][\hor-1] = d \tsum_{k=0}^{\hor} \nrm{\confb_{t+k}}_1,
        \end{aligned}
    \]
    which establishes \ref{state:general}.
    Under the conditions of \ref{state:concentric},
    namely that \eqref{eq:concentric-ambiguity} holds, it follows from 
    definition \eqref{eq:proof-def-evtCov} that 
    \begin{equation} \label{eq:proof-replacement}
         \prob \big[
            \lnot \evtCov
    \big] = 
    \prob \big[ \row{\transmat}{\md} 
    \notin \amb_{\beta_t^\star}(\lrn_t, \md) \big] \leq \nModes \beta_t^\star, 
    \end{equation}
    with $\beta^\star_t \dfn \max_{i \in \natseq{1}{\nbeta}} \{ \conf_{t,i} \} = \nrm{\confb_{t}}_{\infty}$. 
    \Cref{state:concentric}
    is then established by the same inductive argument,
    replacing the expression for $\probt[t][\hor-1]$ in \eqref{eq:proof-costbound-induction-hypothesis},
    and replacing \eqref{eq:proof-bound-coverage} with \eqref{eq:proof-replacement}.
\end{appendixproof}
}

\begin{appendixproof}{thm:consistency-hard-constraints}
    By \Cref{asm:assumption-consistency}, 
    we have for $\hor = 0$ that $\Vht{t}[0] \equiv \cost_{0} \equiv \bar{\Vf}$ and there is nothing to prove.
    The general case, $\hor > 0$, is proved by induction.
    Assume that \eqref{eq:convergence} holds for some $\hor \geq 0$.
    We will now demonstrate that this implies that it also holds for 
    $\hor+1$.
    \rerevision{
    Under \Cref{assum:confidences}, the Borel-Cantelli lemma \cite[Thm. 4.3]{billingsley_ProbabilityMeasure_1995}
    guarantees that with probability 1, there exists a finite $T_\hor \in \N$,
    such that for all $t > T_\hor$, $\row{\transmat}{\md} \in \amb_{\idx{\confb_t}{i}}(s_t,\md)$,
    for all $\md \in \W$ and $i \in \natseq{1}{\nbeta}$,
    and consequently $\lrisk{\lrn_t}{\md}{\conf_t} \geq \E_{\row{\transmat}{\md}}$, uniformly.}{}
    To this end, we will show that the sequence 
    $(\Vtt{t}[\hor + 1](x, \argdot, \md))_{t \in \N}$ and the function $\Vtil[\hor+1](x,\argdot,\md)$, satisfy the conditions of \Cref{lem:epi-convergence}.
    Under \Cref{assum:regularity}, and using \cite[Thm. 3.31]{rockafellar_VariationalAnalysis_1998}, it follows from
    \cite[Prop. 2]{sopasakis_RiskaverseModelPredictive_2019} that $\Vtil[\hor]$ and $\Vtt{t}[\hor+1]$,
    are proper, \ac{lsc}, and level-bounded in $u$ locally uniformly in $x$,
    for all $\md \in \W$.

    Let us introduce the shorthand for the worst-case conditional distribution $p^{\star}_t(u) = (\idx{p^\star_{t}}{\mdnxt}(u))_{\mdnxt \in \W}$:  
    \[
        p^{\star}_{t}(u) \dfn \argmax_{p \in \amb_{\conf_{t}}(\lrn_t, \md)} \sum_{\mdnxt \in \W} \idx{p}{\mdnxt} \Vht{t+1}(f(x,u,\mdnxt),\mdnxt),
    \]
    where we have omitted the dependence on the constant $x$ and $\md$.
    \rerevision{Under \Cref{assum:confidences,}}{
    Combining \Cref{cor:asymptotic-performance-bound} with \Cref{assum:confidences}, 
    }
    the Borel-Cantelli lemma \cite[Thm. 4.3]{billingsley_ProbabilityMeasure_1995}
    guarantees that w.p. 1, there exists a finite $T_\hor \in \N$,
    such that for all $t > T_\hor$, $\row{\transmat}{\md} \in \amb_{\idx{\confb_t}{i}}(s_t,\md)$,
    for all $\md \in \W$ and $i \in \natseq{1}{\nbeta}$,
    \rerevision{}{and furthermore, $\Vht{t} \geq \cost_\hor$, which implies 
    that $\Vtt{t}[\hor+1] \geq \Vtil[\hor+1]$}.
    Moreover, by the induction hypothesis (i.e., \eqref{eq:convergence} holds for $\hor$), 
    there exists for every $\epsilon > 0$, a $T_\epsilon \geq T_{\hor}$, such that for all $t > T_\epsilon$,
    \begin{align} 
        \nonumber
            &
            \Vtt{t}[\hor+1](x,u,\md) - \Vtil[\hor+1](x,u,\md)
            \\\nonumber
            &= \sum_{\mdnxt \in \W} p^{\star}_{t,\mdnxt}(u) \Vht{t+1}[\hor](f(x,u,\mdnxt),\mdnxt) - \elem{\transmat}{\md}{\mdnxt} \cost_{\hor}(f(x,u,\mdnxt),\mdnxt)\\\nonumber
            &\stackrel{\eqref{eq:convergence}}{\leq} \sum_{\mdnxt \in \W} p^{\star}_{t,\mdnxt}(u) (\cost_\hor(f(x,u,\mdnxt),\mdnxt) + \epsilon) - \elem{\transmat}{\md}{\mdnxt} \cost_{\hor}(f(x,u,\mdnxt),\mdnxt)\\\nonumber
            &= \tsum_{\mdnxt \in \W} (p^{\star}_{t,\mdnxt}(u) - \elem{\transmat}{\md}{\mdnxt})V_\hor(f(x,u,\mdnxt),\mdnxt) + p^{\star}_{t,\mdnxt}(u) \epsilon\\
            \extrastep{&\leq \sum_{\mdnxt \in \W} |p^{\star}_{t,\mdnxt}(u) - \elem{\transmat}{\md}{\mdnxt}) | V_\hor(f(x,u,\mdnxt),\mdnxt) + p^{\star}_{t,\mdnxt}(u)\\}
            &\leq \tsum_{\mdnxt \in \W} \ambdia_t V_\hor(f(x,u,\mdnxt),\mdnxt) + \epsilon,\label{eq:lim}
    \end{align}
    where the final inequality is due to \Cref{assum:amb-decrease} and the fact that for all $t > T_\hor$, $\row{\transmat}{\md} \in \amb_{\conf_{t}}(\md, \lrn_t)$.
    As $\ambdia_t \to 0$, the first term in \eqref{eq:lim} can be made arbitrarily small by increasing $t$,
    provided that $\cost_{\hor}(f(x,u,\mdnxt),\mdnxt) < \infty$, for all $\mdnxt \in \W$, 
    hence establishing pointwise convergence $\Vtt{t}[\hor+1] \pto \Vtil[\hor+1]$
    whenever $\dom \cost_{\hor}$ is \ac{RCI} for \eqref{eq:system-dynamics}, 
    which in turn holds if $\Xf$ is \ac{RCI} by \Cref{prop:recursive-feasibility}. 
    The sequence $(\Vtt{t}[\hor + 1](x, \argdot, \md))_{t\in \N}$
    and the function $\Vtil[\hor+1](x,\argdot,\md)$ thus satisfy the conditions of \Cref{lem:epi-convergence},
    which establishes \eqref{eq:convergence} for $\hor +1$.
\end{appendixproof}

\end{appendix}
\end{document}